\newlist{myenumi}{enumerate}{1}
\setlist[myenumi,1]{label=\upshape(\roman*)}
\newlist{myenuma}{enumerate}{1}
\setlist[myenuma,1]{label=\upshape(\alph*)}
\newlist{myenumext}{enumerate}{1}
\setlist[myenumext,1]{label=\upshape(\Roman*)}
\setlist[enumerate]{label=\upshape(\arabic*)}
\newcounter{commentcounter}
\newcommand{\showcomments}{yes}
\newsavebox{\commentbox}
\newenvironment{com}%
{\ifthenelse{\equal{\showcomments}{yes}}%
{\raisebox{0.7mm}{\small\cref{com:\arabic{commentcounter}}}
        \begin{lrbox}{\commentbox}
        \begin{minipage}[t]{1.2in}\raggedright\sffamily\tiny
        \raisebox{0.7mm}{\thecommentcounter}\label{com:\arabic{commentcounter}}}
{\begin{lrbox}{\commentbox}}}
{\ifthenelse{\equal{\showcomments}{yes}}
{\end{minipage}\end{lrbox}\marginpar{\usebox{\commentbox}}}
{\end{lrbox}}}
\numberwithin{equation}{section}
\declaretheoremstyle[headformat=swapnumber,headfont=\bfseries,bodyfont=\itshape]{theorem}
\declaretheorem[name=Theorem,numberlike=equation,style=theorem]{theorem}
\declaretheorem[name=Lemma,numberlike=theorem,style=theorem]{lemma}
\declaretheorem[name=Corollary,numberlike=theorem,style=theorem]{corollary}
\declaretheorem[name=Proposition,numberlike=theorem,style=theorem]{proposition}
\declaretheoremstyle[headformat=swapnumber, bodyfont=\normalfont]{definition}
\declaretheorem[name=Definition,numberlike=theorem,style=definition]{definition}
\declaretheorem[name=Notation,numberlike=theorem,style=definition]{notation}
\declaretheorem[name=Setup,numberlike=theorem,style=definition]{setup}
\declaretheoremstyle[headformat=swapnumber,headfont=\itshape, bodyfont=\normalfont]{remark}
\declaretheorem[name=Example,numberlike=theorem,style=remark]{example}
\declaretheorem[name=Remark,numberlike=theorem,style=remark]{remark}
\crefname{setup}{Setup}{Setups}
\crefname{lemma}{Lemma}{Lemmas}
\crefname{diagram}{Diagram}{Diagram}
\crefname{commentcounter}{}{}
\newcommand{\reals}{\mathbb{R}}
\newcommand{\integers}{\mathbb{Z}}
\newcommand{\rationals}{\mathbb{Q}}
\DeclareMathOperator{\id}{id}
\newcommand{\into}{\hookrightarrow}
\newcommand{\onto}{\twoheadrightarrow}
\newcommand{\iso}{\cong}
\DeclareMathOperator{\ch}{ch}  
\DeclareMathOperator{\ph}{ph}  
\DeclareMathOperator{\im}{im}      
\DeclareMathOperator{\spin}{spin}
\DeclareMathOperator{\spinC}{\spin^{\mathrm{c}}}
\DeclareMathOperator{\colim}{colim}
\newcommand{\sphere}{\mathrm{S}}
\NewDocumentCommand \KK {} {\mathrm{KKO}}
\NewDocumentCommand \RKO {} {\mathrm{KO}}
\NewDocumentCommand \K {} {\mathrm{K}}
\NewDocumentCommand \HZ {} {\mathrm{H}}
\NewDocumentCommand \KO {} {\mathrm{KO}}
\NewDocumentCommand \SpinBordism {} {\Omega^{\mathrm{\spin}}}
\newcommand{\ccomp}[2]{#1 \setminus #2} 
\newcommand*\ucov[1]{{\mathpalette\wthelper{#1}}} 
\newcommand*\wthelper[2]{%
        \hbox{\dimen@\accentfontxheight#1%
                \accentfontxheight#11.25\dimen@
                $\m@th#1\widetilde{#2}$%
                \accentfontxheight#1\dimen@
        }%
}
\newcommand*\accentfontxheight[1]{%
        \fontdimen5\ifx#1\displaystyle
                \textfont
        \else\ifx#1\textstyle
                \textfont
        \else\ifx#1\scriptstyle
                \scriptfont
        \else
                \scriptscriptfont
        \fi\fi\fi3
}
\newcommand*\printstyle[1]{%
        \ifx#1\displaystyle
                textfont
        \else\ifx#1\textstyle
                textfont
        \else\ifx#1\scriptstyle
                scriptfont
        \else
                scriptscriptfont
        \fi\fi\fi
}
\newcommand{\Eub}{\underline{\mathrm{E}}}
\DeclareMathOperator{\Ind}{Ind}
\DeclareMathOperator{\ii}{i}
\NewDocumentCommand{\pt}{}{\mathrm{pt}}
\newcommand{\Bfree}{\mathrm{B}}
\newcommand{\Efree}{\mathrm{E}}
\DeclareMathOperator{\tn}{\mathsf{t}}
\NewDocumentCommand \numbers {m} {\mathbb{#1}}
\NewDocumentCommand \Q {} {\numbers{Q}}
\NewDocumentCommand \R {} {\numbers{R}}
\NewDocumentCommand \C {} {\numbers{C}}
\NewDocumentCommand \N {} {\numbers{N}}
\NewDocumentCommand \Z {} {\numbers{Z}}
\NewDocumentCommand \Cz {} {\mathrm{C}_{0}}
\NewDocumentCommand \Cb {} {\mathrm{C}_{\mathrm{b}}}
\NewDocumentCommand \blank {} {{-}}
\NewDocumentCommand \Cstar {} {\mathrm{C}^\ast}
\NewDocumentCommand \red {} {\mathrm{red}}
\NewDocumentCommand \CstarRed {} {\Cstar_{\red}}
\NewDocumentCommand \CstarMax {} {\Cstar_{\max}}
\NewDocumentCommand \lf {} {\mathrm{lf}}
\NewDocumentCommand \CP {} {\C\mathrm{P}}
\NewDocumentCommand{\parensup}{m}{\textup{(}#1\textup{)}} 
\NewDocumentCommand \disk {s} {
  \IfBooleanTF{#1} {
    \mathring{\mathrm{D}}
  } {
    \mathrm{D}
  }
}
\NewDocumentCommand \myEgenSymb {m m m m m} {
  {#5}_{#3}
  ^{
    \IfBooleanT{#1}{
      #4
      \IfNoValueF{#2}{,}
    }
    \IfNoValueF{#2}{#2}
  }
}
\NewDocumentCommand \mycoEgenSymb {m m m m m} {
  {#5}^{#3}
  _{
    \IfBooleanT{#1}{
      #4
      \IfNoValueF{#2}{,}
    }
    \IfNoValueF{#2}{#2}
  }
}
\NewDocumentCommand \myE {s o m} {
  \myEgenSymb{#1}{#2}{#3}{\mathrm{lf}}{E}
}
\NewDocumentCommand \myEvert {s o m} {
  \myEgenSymb{#1}{#2}{#3}{\mathrm{vlf}}{E}
}
\NewDocumentCommand \mycoE {s o m} {
  \mycoEgenSymb{#1}{#2}{#3}{\mathrm{c}}{E}
}
\NewDocumentCommand \mycoEvert {s o m} {
  \mycoEgenSymb{#1}{#2}{#3}{\mathrm{vc}}{E}
}
\NewDocumentCommand \myZ {s o} {
  \mathrm{Z}
  \IfNoValueF{#2}{^{(#2)}}
}
\newcommand{\forget}[1]{}
\begin{document}


\title{Transfer maps in generalized group homology via submanifolds}

\author{
Martin Nitsche
\thanks{
  e-mail:~\href{mailto:martin.nitsche@tu-dresden.de}{martin.nitsche@tu-dresden.de}
  \newline
  M.~Nitsche was partially supported by the German Research Foundation, DFG,
  under the Research Training Group \enquote{Mathematical Structures in Modern Quantum Physics};
  and partially supported by the European Research Council, ERC,
  under the Consolidator Grant \enquote{Groups, Dynamics, and Approximation}
}\\
\small
Institut f\"ur Geometrie\\[-0.15cm]
\small
TU Dresden, Germany
\and
Thomas Schick
\thanks{
  e-mail:~\href{mailto:thomas.schick@math.uni-goettingen.de}{tschick@math.uni-goettingen.de}, 
  www:~\href{http://www.uni-math.gwdg.de/schick}{http://www.uni-math.gwdg.de/schick}
}\\
\small
Mathematisches Institut\\[-0.15cm]
\small
Universit\"at G{\"o}ttingen, Germany
\and
Rudolf Zeidler
\thanks{
  e-mail:~\href{mailto:math@rzeidler.eu}{math@rzeidler.eu}, 
  www:~\href{https://www.rzeidler.eu}{https://www.rzeidler.eu}
  \newline
  R.~Zeidler was partially supported by the Deutsche Forschungsgemeinschaft (DFG, German Research Foundation) under Germany's Excellence Strategy EXC 2044--390685587, Mathematics Münster: Dynamics -- Geometry -- Structure.
}\\
\small
Mathematisches Institut\\[-0.15cm]
\small
WWU Münster, Germany
}
\maketitle

\begin{abstract}
  Let $N \subset M$ be a submanifold embedding of spin manifolds of some codimension $k \geq 1$.
  A classical result of Gromov and Lawson, refined by Hanke, Pape and Schick,
  states that $M$ does not admit a metric of positive scalar curvature if $k = 2$ and the Dirac operator of $N$ has non-trivial index, provided that suitable conditions are satisfied.
  In the cases $k=1$ and $k=2$, Zeidler and Kubota, respectively, established more systematic results: There exists a transfer $\mathrm{KO}_\ast(\mathrm{C}^{\ast} \pi_1 M)\to \mathrm{KO}_{\ast - k}(\mathrm{C}^\ast \pi_1 N)$ which maps the index class of $M$ to the index class of $N$.
   The main goal of this article is to construct analogous transfer maps $E_\ast(\mathrm{B}\pi_1M) \to E_{\ast-k}(\mathrm{B}\pi_1N)$ for different generalized homology theories $E$ and suitable submanifold embeddings.
   The design criterion is that it is compatible with the transfer $E_\ast(M) \to E_{\ast-k}(N)$ induced by the inclusion $N \subset M$ for a chosen orientation on the normal bundle.
   Under varying restrictions on homotopy groups and the normal bundle, we construct transfers in the following cases in particular:
   In ordinary homology, it works for all codimensions.
   This slightly generalizes a result of Engel and simplifies his proof.
   In complex K-homology, we achieve it for $k \leq 3$.
   For $k \leq 2$, we have a transfer on the equivariant KO-homology of the classifying space for proper actions.
\end{abstract}

\section{Introduction}
  Our starting point is the following result of Gromov and Lawson~\cite{GromovLawsonEnlarg} and Hanke, Pape, and Schick~\cite{HankePapeSchick}.
  All manifolds we consider are smooth.
  \begin{theorem}\label{theo:obstruc_codim_2}
     Let \(M\) be an $m$-dimensional spin manifold and $N\subset M$ a
     submanifold of codimension two with trivial normal bundle. Assume that the inclusion \(N \hookrightarrow M\) induces an injection
     $\pi_1(N)\to \pi_1(M)$ and a surjection $\pi_2(N)\to\pi_2(M)$.

     If the Rosenberg index \(\alpha(N)\in \KO_{m-2}(\Cstar \pi_1(N))\) is non-zero,
     then $M$ does not admit a Riemannian metric of positive scalar curvature.
   \end{theorem}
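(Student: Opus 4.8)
We argue by contradiction, deducing the statement from a $\KO$-theoretic transfer together with the obstruction property of the Rosenberg index. Assume that $M$ carries a metric $g$ with $\scal(g) > 0$. By the Schrödinger--Lichnerowicz formula, positivity of $\scal(g)$ makes the $\Cstar\pi_1(M)$-linear spin Dirac operator of $M$ invertible; hence its index, the Rosenberg index, vanishes: $\alpha(M) = 0 \in \KO_m(\Cstar\pi_1 M)$. It therefore suffices to construct a homomorphism
\[
  \tau \colon \KO_m(\Cstar\pi_1 M) \longrightarrow \KO_{m-2}(\Cstar\pi_1 N)
  \qquad\text{satisfying}\qquad \tau\bigl(\alpha(M)\bigr) = \alpha(N),
\]
for then $\alpha(N) = \tau(0) = 0$, contradicting the hypothesis $\alpha(N) \neq 0$.

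The construction of $\tau$ is where the geometry of the embedding and the homotopy hypotheses come in. Fix a trivialisation of the normal bundle and identify a closed tubular neighbourhood of $N$ with $N \times \disk^2$; the Pontryagin--Thom collapse then gives a map $M \to N_+ \wedge \sphere^2$, and composing with the classifying map $N \to \mathrm{B}\pi_1 N$ produces $\rho\colon M \to (\mathrm{B}\pi_1 N)_+ \wedge \sphere^2$. Via the Bott isomorphism $\widetilde{\KO}_\ast\bigl((\mathrm{B}\pi_1 N)_+ \wedge \sphere^2\bigr) \cong \KO_{\ast-2}(\mathrm{B}\pi_1 N)$, the map $\rho$ yields a transfer $\KO_\ast(M) \to \KO_{\ast-2}(\mathrm{B}\pi_1 N)$ on ordinary $\KO$-homology; being the Gysin (umkehr) homomorphism of the embedding followed by $N \to \mathrm{B}\pi_1 N$, it sends the $\KO$-fundamental class $[M]$ to the image of the $\KO$-fundamental class $[N]$. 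This is the unassembled form of the desired compatibility --- the design criterion over $\mathrm{B}\pi_1 N$. Now injectivity of $\pi_1(N) \to \pi_1(M)$ and surjectivity of $\pi_2(N) \to \pi_2(M)$ are together equivalent to the vanishing of the relative homotopy group $\pi_2(M, N)$, and this is exactly the low-dimensional input needed to match the Mishchenko--Fomenko data of $M$ and $N$ and thereby lift the above transfer from $\KO_\ast(M)$ to a homomorphism defined on $\KO_\ast(\Cstar\pi_1 M)$ with values in $\KO_{\ast-2}(\Cstar\pi_1 N)$. Concretely, $\tau$ is implemented by Kasparov product with a class assembled from the collapse map and the compactly supported Bott element of $\reals^2$.

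The step I expect to be the main obstacle is the last one: promoting the compatibility from the level of $\KO_\ast(M) \to \KO_{\ast-2}(N)$ to the identity $\tau(\alpha(M)) = \alpha(N)$ for the $\Cstar$-algebraic index classes. The difficulty is that the Bott class on $\sphere^2$ is not represented by a flat bundle, so one cannot simply twist the $\Cstar$-linear Dirac operator of $M$ by it and invoke the scalar-curvature estimate; instead the twisting must be absorbed into an operator-algebraic construction --- replacing the non-flat bundle by a Morita-type equivalence involving $\Cz(\reals^2)$, and, in order to keep the full index class rather than a numerical shadow of it, a crossed product such as $\Cz(\reals^2) \semiprod \integers$ --- after which one tracks the index classes through the resulting relative index theorem. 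This operator-algebraic bookkeeping is the technical heart of Hanke--Pape--Schick~\cite{HankePapeSchick} and of the transfer theorems of Zeidler and Kubota; once it is carried out, naturality of the assembly map and of the Rosenberg index under the geometric collapse yield $\tau(\alpha(M)) = \alpha(N)$, and the proof is complete.
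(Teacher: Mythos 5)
Your overall reduction is sound: positive scalar curvature forces $\alpha(M)=0$ by Rosenberg's theorem, the unassembled transfer $\KO_\ast(M)\to\KO_{\ast-2}(\Bfree\pi_1 N)$ exists and hits $[N]_{\KO}$, and your observation that the two homotopy hypotheses are equivalent to $\pi_2(M,N)=0$ is correct. But there is a genuine gap at exactly the point you flag, and it cannot be waved away as "bookkeeping": the existence of a homomorphism $\tau\colon \KO_m(\Cstar\pi_1 M)\to\KO_{m-2}(\Cstar\pi_1 N)$ with $\tau(\alpha(M))=\alpha(N)$ is not a routine consequence of the geometric collapse plus naturality — it is the entire content of the (strictly stronger) theorem of Kubota~\cite{Kubota}, and this very paper is largely devoted to the difficulty of that step. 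In codimension two the paper's own machinery (\cref{thm:trivialized-codim-2-short}) only produces the transfer up to $\KO^\pi_{\ast-2}(\Eub\pi)$ and explicitly does \emph{not} construct the map on $\KO_\ast(\Cstar\Gamma)$; the $\Cstar$-level transfer is achieved here only in codimension one, by a genuinely nontrivial KK-theoretic construction (\cref{sec:TransferKK}). Your description of how the non-flat Bott class is to be "absorbed" (a Morita equivalence with $\Cz(\R^2)$ and a crossed product $\Cz(\R^2)\rtimes\Z$) does not correspond to an actual argument in the literature and does not by itself yield $\tau(\alpha(M))=\alpha(N)$.

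You should also be aware that you are misattributing the strategy: Hanke--Pape--Schick~\cite{HankePapeSchick} do \emph{not} construct a transfer on $\KO_\ast(\Cstar\pi_1 M)$. Their proof of \cref{theo:obstruc_codim_2} is a direct index-theoretic argument — pass to the covering $\bar M$ with $\pi_1\bar M=\pi_1 N$, use $\pi_2$-surjectivity to build a line bundle on $\bar M$ Poincaré dual to $N$, twist the $\Cstar\pi_1(N)$-linear Dirac operator by it, and play off a localization computation of the twisted index against a vanishing theorem for operators twisted by bundles of small curvature. The paper itself only cites this result rather than proving it, so a self-contained proof would have to actually carry out either that argument or Kubota's construction; as written, your proposal defers the whole difficulty to references whose mechanisms you have not reproduced.
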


   The \enquote{standard} obstruction to the existence of a metric with positive scalar
   curvature on $M$ is the Rosenberg index \(\alpha(M)\in
   \KO_m(\Cstar \pi_1(M))\) \emph{of \(M\)}.
   The surprise is that, here, we can use $\alpha(N)$
as a \enquote{submanifold obstruction}.
   However, the Rosenberg index of \(M\) is conjectured to be the \enquote{universal} index obstruction
to positive scalar curvature on \(M\), see~\cite{SchickICM}*{Conjecture 1.5}.
    Thus in the situation of \cref{theo:obstruc_codim_2}, one expects the non-vanishing of \(\alpha(N)\) to imply the non-vanishing of \(\alpha(M)\).
This was recently proved by Kubota for the Rosenberg index in the maximal
group \(\Cstar\)-algebra~\cite{Kubota}*{Section 6.3}. A somewhat simpler proof
of this theorem is given in \cite{KubotaSchick}, where it is restated as \cite{KubotaSchick}*{Theorem 1.2}.

Moreover, Zeidler \cite{ZeidlerSubmf}*{Theorem 1.7} showed the analog assertion for submanifolds of codimension one, for the reduced $\Cstar$\nobreakdash-algebra and without a condition on $\pi_2$.
Engel~\cite{Engel} and Zeidler~\cite{ZeidlerSubmf} also prove variants of this for other codimensions under strong additional hypotheses.

\medskip

It is our goal to conceptually illuminate the submanifold obstruction results à la \cref{theo:obstruc_codim_2} by constructing \emph{transfer maps}:
  We think of the Rosenberg index of a spin manifold \(M\) as the image of the spin
  bordism fundamental class under the sequence of transformations
  \begin{equation}\label{eq:KO_sequence}
    \SpinBordism_n(M) \to \KO_n(M) \xrightarrow{(c_\Gamma)_*} \KO_n(\Bfree \Gamma) \xrightarrow{\iota_*}
    \KO_n^\Gamma(\Eub\Gamma) \xrightarrow{\mu} \KO_n(\Cstar \Gamma),
  \end{equation}
  where \(\Gamma = \pi_1(M)\),
$c_\Gamma$ is classifying the principal $\Gamma$-bundle $\ucov{M}\to M$,
$\iota$ is the classifying map for proper actions and $\mu$ is the Baum-Connes
assembly map. On purpose, we leave it open which $\Cstar$-completion $\Cstar\Gamma$
of $\reals\Gamma$ we use: if we use $\CstarMax\Gamma$ we get a priori a
stronger invariant, if we use $\CstarRed\Gamma$ the Baum-Connes conjecture
predicts that $\mu$ is an isomorphism.

When $N\subset M$ is a codimension $k$ submanifold (satisfying certain
conditions) and $\pi={i_N}_*(\pi_1(N))<\pi_1(M)$, we ---ideally--- want to construct transfer maps fitting into a commutative diagram
  \begin{equation}\label[diagram]{eqn:KO-extension-diagram}
  \resizebox{\linewidth}{!}{
  \begin{tikzcd}[ampersand replacement=\&]
    \Omega_*^{\spin}(M) \rar["(c_\Gamma)_\ast"] \dar["\tau_{M,N}"] \& \Omega_*^{\spin}(\Bfree \Gamma) \rar \dar["\tau_{\Gamma,\pi}"] \& \KO_\ast(\Bfree \Gamma) \dar["\tau_{\Gamma,\pi}"] \rar
      \& \KO^\Gamma_{\ast}(\Eub \Gamma) \rar \dar["\underline{\tau}_{\Gamma,\pi}"] \& \KO_\ast(\Cstar \Gamma) \dar["\sigma_{\Gamma,\pi}"] \\
    \Omega_{\ast-k}^{\spin}(N) \rar["(c_\pi)_\ast"] \& \Omega_{\ast-k}^{\spin}(\Bfree \pi) \rar \& \KO_{\ast-k}(\Bfree \pi)\rar
        \& \KO^\pi_{\ast-k}(\Eub \pi) \rar \& \KO_{\ast-k}(\Cstar \pi),
  \end{tikzcd}
    }
  \end{equation}
such that $\tau_{M,N}$ maps the fundamental class of $M$ to that of $N$. The existence of the complete diagram would make it most transparent how $\alpha(N)$ is an obstruction to positive scalar curvature on $M$. But already part of the diagram would suffice to show this under some additional assumptions:

When we use $\Cstar\Gamma=\CstarRed\Gamma$ and $\Gamma$ satisfies the Baum-Connes Conjecture, i.e.\ when $\mu$ is an isomorphism, the rightmost extension can be abstractly defined as $\mu\circ\underline{\tau}_{\Gamma,\pi}\circ\mu^{-1}$. When $\mu$ is at least injective, existence of $\underline{\tau}_{\Gamma,\pi}$ is still enough to conclude that $\alpha(N)\neq 0$ implies $\alpha(M)\neq 0$. Similarly, the existence of $\tau_{\Gamma,\pi}$ suffices when $\Gamma$ satisfies the Strong Novikov Conjecture, i.e.\ when $\mu\circ\iota_*$ is injective.
Note that the Baum-Connes Conjecture for real K-Theory is equivalent to the
one for complex K-Theory, and after inverting 2 this is also true for
injectivity of $\mu$ \cite{schickRealVsComplex}. The map $\iota_*$ is always
rationally injective and an isomorphism if $\Gamma$ is torsion-free.

\medskip

Our first and most complete result deals with codimension one.
Zeidler already proved that if \(k =1\) and the map \(\pi_1(N) \to \pi_1(M)\) is injective, then a suitable transformation on the level of the group \(\Cstar\)-algebras exists~\cite{ZeidlerSubmf}*{Remark 1.8}.
In \cref{sec:TransferKK}
we reprove this and complete the picture to
obtain \cref{eqn:KO-extension-diagram}.
We also generalize the result slightly by dropping the \(\pi_1\)-injectivity requirement and instead work with the image of the induced map \(\pi_1(N) \to \pi_1(M)\).

\begin{theorem}[{see \cref{theo:codim_1_long}}]\label{theo:codim_1_short}
  Let $M$ be a closed connected manifold and $N \subset M$ a closed connected submanifold of codimension one with trivial normal bundle.
  Let \(\Gamma \coloneqq \pi_1(M)\), and let \(\pi \leq \Gamma\) be the image of the homomorphism \(\pi_1(N)\to \pi_1(M) = \Gamma\) which is induced by the inclusion \(N \hookrightarrow M\).
Then the classical transfer map can be extended as in
\cref{eqn:KO-extension-diagram}. This holds for $\Cstar$ either $\CstarRed$ or
$\CstarMax$.

  In particular, if \(M\) is spin and \(\alpha(N) \neq 0 \in \KO_{m-k}(\Cstar
  \pi)\), then \(\alpha(M) \neq 0 \in \KO_{m}(\Cstar \Gamma)\) and \(M\) does
  not admit a metric of positive scalar curvature. 
\end{theorem}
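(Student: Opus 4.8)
\emph{Proof strategy.} The plan is to produce all five vertical maps of \cref{eqn:KO-extension-diagram} from a single geometric input --- the splitting of $\Gamma$ as the fundamental group of a graph of groups obtained by cutting $M$ along $N$ --- so that each transfer is ``collapse onto the codimension-one piece followed by a suspension isomorphism'' and every square commutes by naturality. First, using the trivialisation of the normal bundle, I fix a tubular neighbourhood $N\times(-1,1)\subset M$ and set $M'\coloneqq M\smallsetminus\bigl(N\times(-\tfrac12,\tfrac12)\bigr)$, a compact manifold with $\partial M'=N\times\{\pm\tfrac12\}$; then $M$ is the total space of a finite graph of spaces with single edge space $N$ and one or two vertex spaces (the components of $M'$), and by van Kampen and Bass--Serre theory $\Gamma$ is the fundamental group of the associated graph of groups, whose unique edge group is $\pi$. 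This is the only place the hypotheses enter: passing to the image $\pi$ rather than $\pi_1(N)$ is exactly what makes the edge homomorphisms into the vertex groups injective, so that $\mathrm B\Gamma$ is the total space of the corresponding graph of aspherical spaces and $c_\Gamma\colon M\to\mathrm B\Gamma$ can be taken to respect these structures, restricting on $N\times[-1,1]$ to $j\circ c_\pi\circ\mathrm{pr}_N$ with $j\colon\mathrm B\pi\to\mathrm B\Gamma$.

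Next I would define the transfers over $M$ and over $\mathrm B\Gamma$. Collapsing the vertex part in either graph of spaces gives compatible cofibre sequences $M'\to M\to M/M'\simeq\Sigma N_+$ and $B_{\mathrm{vtx}}\to\mathrm B\Gamma\to\Sigma(\mathrm B\pi)_+$, and for any homology theory $E$ one sets the transfer to be the map to the quotient followed by the suspension isomorphism $\widetilde E_*(\Sigma Y_+)\cong E_{*-1}(Y)$. For $E=\SpinBordism$ this recovers the classical bordism transfer $\tau_{M,N}$ --- making a cycle $f\colon W\to M$ transverse to $N$ and taking $f^{-1}(N)\to N$, spin via the pulled-back normal trivialisation, agrees with collapse onto the Thom space of the trivialised normal bundle, by Pontryagin--Thom --- and $\tau_{M,N}[M]=[N]$ by inspection. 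Since the construction consists only of a cofibre sequence and a suspension isomorphism, it is natural in $E$ and in maps of cofibre sequences; applying this to $c_\Gamma$ (using asphericity of $\mathrm B\Gamma$ and $\mathrm B\pi$, so the maps in sight are determined by $\pi_1$) and to the Atiyah--Bott--Shapiro transformation $\SpinBordism_*\to\KO_*$ yields commutativity of the two leftmost squares of \cref{eqn:KO-extension-diagram}.

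For the remaining two squares I would pass to the Bass--Serre tree $T$. Then $\Eub\Gamma\times T$ is a model for $\Eub\Gamma$ --- its stabilisers are finite, and for finite $F\leq\Gamma$ the fixed set $(\Eub\Gamma)^F\times T^F$ is contractible because $T^F$ is a non-empty subtree --- and it restricts to a model for $\Eub\pi$; its equivariant cell structure over $T/\Gamma$ yields a $\Gamma$-cofibre sequence whose quotient term is, $\Gamma$-equivariantly, a suspension of $\Eub\pi$ induced up to $\Gamma$, and collapse onto it (plus the equivariant suspension and induction isomorphisms) defines $\underline\tau_{\Gamma,\pi}\colon\KO^\Gamma_*(\Eub\Gamma)\to\KO^\pi_{*-1}(\Eub\pi)$; the square containing $\iota_*$ then commutes because $\mathrm E\Gamma\times T\to\Eub\Gamma\times T$ respects the cell structure over $T/\Gamma$. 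On the $\Cstar$-level, the graph-of-groups decomposition presents $\Cstar\Gamma$ --- for $\Cstar$ either $\CstarMax$ or $\CstarRed$ --- as a (full, resp.\ reduced) amalgamated product or HNN-construction over $\Cstar\pi$; the resulting Pimsner-type six-term exact sequence in $\KO$-theory has a boundary map $\KO_*(\Cstar\Gamma)\to\KO_{*-1}(\Cstar\pi)$, which I would take as $\sigma_{\Gamma,\pi}$ (equivalently, the map induced by the $\KK$-element attached to the $\Gamma$-action on $T$), and naturality of this exact sequence under the Baum--Connes assembly maps --- matching the equivariant cofibre sequence above --- gives the last square.

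Chasing $[M]\in\SpinBordism_m(M)$ down the left column then gives $[N]\in\SpinBordism_{m-1}(N)$, hence $\sigma_{\Gamma,\pi}(\alpha(M))=\alpha(N)$ along \cref{eq:KO_sequence}; since $M$ spin forces $N$ spin (trivial normal bundle), $\alpha(N)\neq0$ implies $\alpha(M)\neq0$, and the Rosenberg-index obstruction then rules out positive scalar curvature on $M$. The main obstacle will be the $\Cstar$-algebra transfer: constructing $\sigma_{\Gamma,\pi}$ uniformly for both the maximal and the reduced completion and verifying its compatibility with the Baum--Connes assembly map. I expect this is cleanest done $\KK$-theoretically, realising the transfer by an explicit Kasparov module built from the action on the Bass--Serre tree (a Julg--Valette-type construction); everything else reduces to organising cofibre sequences and checking naturality.
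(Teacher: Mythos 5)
Your overall strategy is sound and, for the left-hand part of the diagram, essentially identical to the paper's: the paper also cuts $M$ along $N$, identifies $\Gamma$ as an amalgam $\Gamma_+ *_\pi \Gamma_-$ or an HNN extension over $\pi$ (with vertex/edge groups the \emph{images} in $\Gamma$, which is exactly why passing to $\pi$ rather than $\pi_1(N)$ matters), builds $\Bfree\Gamma$ as a graph of spaces containing $\Bfree\pi\times[-1,1]$ by attaching cells to $M$, and defines $\tau_{\Gamma,\pi}$ on $\Omega^{\spin}_\ast(\Bfree\Gamma)$ and $\KO_\ast(\Bfree\Gamma)$ by the collapse $\Bfree\Gamma\to\Sigma\Bfree\pi$ compatible with $M\to\Sigma N$ (plus a colimit argument when $\Bfree\Gamma$ is not finite, a point you should add). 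For the right-hand part your route diverges: you pass to the Bass--Serre tree $T$, take $\Eub\Gamma\times T$ as a model for $\Eub\Gamma$, and extract $\underline{\tau}_{\Gamma,\pi}$ and $\sigma_{\Gamma,\pi}$ from the resulting equivariant cofibre sequence and the Pimsner-type sequence. The paper never introduces $T$: it works on the intermediate cover $\bar M=\pi\backslash\ucov{M}$, which $\ucov{N}$ separates into two halves, defines a cutoff function $\chi$, and packages it into an explicit class $\partial_{\Gamma/\pi}\in\KK^\Gamma_{-1}(\R,\Cz(\Gamma/\pi))$ (\cref{defi:GroupBoundaryClass}); then $\underline{\tau}_{\Gamma,\pi}$ is $\blank\otimes\partial_{\Gamma/\pi}$ composed with the inverse of the Chabert--Echterhoff--Oyono-Oyono induction isomorphism (\cref{thm:KKinductionIso}), and $\sigma_{\Gamma,\pi}$ is the descent $j_\Gamma(\partial_{\Gamma/\pi})$ followed by the Morita identification $\KO_\ast(\Cz(\Gamma/\pi)\rtimes\Gamma)\cong\KO_\ast(\Cstar\pi)$ --- which works verbatim for $\CstarRed$ and $\CstarMax$ and makes compatibility with assembly a formal consequence of multiplicativity of descent (\cref{lem:descentCommutative}). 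Your tree picture is morally the same element (the paper itself credits Oyono-Oyono's work on groups acting on trees), but the paper's formulation avoids having to invoke the Pimsner sequence for the maximal completion and avoids identifying its boundary map with anything.

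The one place where your proposal defers rather than supplies an argument is, unfortunately, the technical core. You write that the main obstacle is constructing $\sigma_{\Gamma,\pi}$ and checking compatibility with assembly; in fact that part is the formal one. The genuinely non-trivial step is showing that the abstract boundary map on $\KO^\Gamma_\ast(\Eub\Gamma)$ (however defined --- via $T$ or via $\chi$) restricts, under $\KO_\ast(M)\cong\KO^\Gamma_\ast(\ucov{M})\to\KO^\Gamma_\ast(\Eub\Gamma)$, to the geometric Pontryagin--Thom transfer $\tau_{M,N}$. In the paper this is \cref{prop:boundaryElement} together with \cref{lem:inductionDiagram}: one must compute $\Ind_\pi^\Gamma(\partial^\pi_{\ucov{M}})=\partial_{\Gamma/\pi}\otimes_\R 1_{\Cz(\ucov{M})}$ by an explicit chain of homotopies and isomorphisms of Kasparov modules, using the homeomorphism $\Gamma\times_\pi\ucov{M}\cong\Gamma/\pi\times\ucov{M}$ and the finiteness statement of \cref{lem:finitelyManyClasse}. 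In your setup the analogous check is that the $\Gamma$-map $\ucov{M}\to\Eub\Gamma\times T$ carries the cofibre sequence of $(\Eub\Gamma\times T,\Eub\Gamma\times T^{(0)})$ back to the cofibre sequence of $(\ucov{M},\ucov{M}\setminus\tilde{\tn}(\ucov{N}\times(-1,1)))$ compatibly with the induction isomorphisms; this is true but is precisely the computation you have not done. Until that square is verified, the chase $[M]\mapsto[N]\mapsto\alpha(N)$ does not yet yield $\sigma_{\Gamma,\pi}(\alpha(M))=\mu^\pi(\text{something hitting }\alpha(N))$, so the positive-scalar-curvature conclusion is not yet established.
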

The ultimate goal would be to
obtain \cref{eqn:KO-extension-diagram}
also for submanifolds of higher codimension (and suitable assumptions on the homotopy type).
Unfortunately, in complete generality we do not achieve this.
However, we obtain various
partial results for different generalized homology theories.

\medskip

The obvious choice for $\tau_{M,N}$ is the \emph{classical transfer map}, which can be defined for generalized homology theories:
  Let \(E\) be some multiplicative cohomology theory and suppose that
both $M$ and $N$ admit an $E$-orientation. Then these orientations induce a Thom class $\theta \in E^k(\disk \nu, \sphere \nu)$, where $\disk\nu$ and $\sphere\nu$ denote the disk- and sphere bundle of the normal bundle for $N\subset M$. Let
   \[\tau_{M,N} = \tau_\theta\colon\myE{\ast}(M)\xrightarrow{}\myE{\ast}(M,M\setminus \disk\nu)\iso\myE{\ast}(\disk\nu,\sphere\nu)\xrightarrow{\cap\theta}\myE{\ast-k}(N).\]

In \cref{sec:generalTransfer}, which is partly based on the first-named author's PhD thesis \cite{NitscheThesis}, we introduce for an equivariant generalized cohomology theory $E$ the notion of an \emph{lf-restriction} operation, and we show that singular homology, K-theory and the bordism theories all allow such an operation.
Building on this, we construct transfer maps in the equivariant setting,
\begin{align*}
& & E_*(\Bfree\Gamma)\cong E_*^\Gamma(\Efree\Gamma)
&\to E_{*-k}^\pi(\Efree\Gamma)\cong E_{*-k}(\Bfree\pi) & &\\
\text{and}& &
E_*^\Gamma(\Eub\Gamma)&\to E_{*-k}^\pi(\Eub\Gamma)\cong E_{*-k}^\pi(\Eub\pi),& &
\end{align*}
between the classifying spaces for free and for proper actions.
We obtain a general (but technical) criterion for when this construction can be used to extend the classical transfer map:
\begin{equation}\label[diagram]{eqn:general-extension-diagram}
\begin{tikzcd}
E_*(M)\arrow[r]\arrow[d,"\tau_{M,N}"] &
E_*(\Bfree{\Gamma})\arrow[r]\arrow[d,dashed] &
E^\Gamma_*(\Eub{\Gamma})\arrow[d,dashed]\\
E_{*-k}(N)\arrow[r] &
E_{*-k}(\Bfree{\pi})\arrow[r] &
E^\pi_{*-k}(\Eub{\pi})
\end{tikzcd}
\end{equation}
Our construction is inspired by the ideas of Engel \cite{Engel}, who
constructed an extension of the classical transfer map for singular homology
in the framework of uniformly finite homology. It can be seen as a
generalization and simplification of Engel's arguments. Specializing in
suitable situations to spin
bordism and real K-homology, the method gives rise to
\cref{eqn:KO-extension-diagram} except for the transfer on the level of
$\Cstar$-algebras, which so far has not been constructed in the general setting.

\medskip

In \cref{sec:applications} we apply the general transfer construction to different generalized cohomology theories, always with the goal to obtain information about positive scalar curvature on $M$.

Our only result which yields the desired transformation in all codimensions is for ordinary homology.
It slightly generalizes the result of Engel \cite{Engel}.

\begin{theorem}[see \cref{thm:application-singular-homology}]\label{theo:codim_homol}
  Let $M$ be a closed connected manifold and $N \subset M$ a closed connected submanifold of codimension $k$ with \(A\)-oriented normal bundle for some commutative ring \(A\).
    Let \(\Gamma \coloneqq \pi_1(M)\), and
$\pi={i_N}_*(\pi_1(N))<\pi_1(M)$. Let $E_*=\HZ_*(\blank;A)$ be singular homology with coefficients in $A$.
Assume that
  \begin{enumerate}\setlength\parskip{0pt}\setlength\itemsep{1pt}
  \item if $k>1$, then $\pi_k(N)\to \pi_k(M)$ is surjective;
  \item if \(k>2\), then $\pi_j(M)=0$ for $j=2,\dots,k-1$;
  \item there exists a subset \(S \subseteq \pi_k(N)\) which generates
    \(\pi_k(M)\) such that the composition of the Hurewicz homomorphism
    $\pi_k(N)\to \HZ_k(N;\Z)$ with the evaluation on the Euler class of $\nu$ vanishes on \(S\).
In particular, this is satisfied if the normal bundle is trivial or \(\pi_k(M)=0\) or \(\dim N < \frac{\dim M}{2}\).
  \end{enumerate}
Then the classical transfer map can be extended to $\HZ_*(\Bfree\Gamma;A)\to\HZ_{*-k}(\Bfree\pi;A)$.
\end{theorem}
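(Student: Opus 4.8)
The plan is to obtain \cref{theo:codim_homol} as an instance of the general transfer construction of \cref{sec:generalTransfer}. Since $\HZ_\ast(\blank;A)$ is among the theories carrying an lf-restriction operation, the middle and right squares of \cref{eqn:general-extension-diagram} and the dashed arrows are produced by that machinery once the pair $N\subset M$, together with the $A$-orientation of its normal bundle $\nu$, satisfies the input condition of the general criterion. So the work is entirely to verify that input condition under hypotheses (1)--(3). It is convenient to keep in mind the Pontryagin--Thom description of the classical transfer: $\tau_{M,N}$ is the collapse $M\to M/(M\setminus\disk*\nu)=\Th(\nu)$ followed by the Thom isomorphism $\widetilde{\HZ}_\ast(\Th(\nu);A)\iso\HZ_{\ast-k}(N;A)$, and the self-intersection formula says that $\tau_{M,N}\circ(i_N)_\ast$ is capping with the Euler class $e(\nu)\in\HZ^k(N;A)$. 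The criterion of \cref{sec:generalTransfer} asks, roughly, that this collapse/transfer data be compatible with the cells one attaches to build $\Bfree\Gamma$ from $M$ and $\Bfree\pi$ from $N$; equivalently, it is an obstruction-theoretic condition controlled by the homotopy groups $\pi_j(M)$ for $j\le k$ and by $e(\nu)$.

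The verification proceeds dimension by dimension. The classifying map $c_\Gamma\colon M\to\Bfree\Gamma$ is a $\pi_1$-isomorphism, so $\Bfree\Gamma$ is built from $M$ by attaching cells of dimension $\ge 3$ killing $\pi_2(M),\pi_3(M),\dots$; likewise $\Bfree\pi$ is built from $N$ by first attaching $2$-cells (realising $\pi_1=\pi$, the image of $\pi_1(N)\to\Gamma$) and then higher cells. In the range $2\le j\le k-1$ --- relevant only if $k>2$ --- hypothesis (2) gives $\pi_j(M)=0$, so there are no obstructing cells on the $M$-side there, and the classes of $\pi_j(N)$ that must be killed already bound in $M$, so the collapse data extends for dimension reasons (one stays below the connectivity bound $k-1$ of $\Th(\nu)$). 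The critical dimension is $j=k$: by hypothesis (1) the map $\pi_k(N)\to\pi_k(M)$ is surjective, so one may choose, for the $(k+1)$-cells killing $\pi_k(M)$, attaching spheres coming from classes $\sigma\in\pi_k(N)$; by the self-intersection formula the first obstruction to extending the Thom collapse over such a cell is the pairing $\langle e(\nu),h(\sigma)\rangle\in A$ of the Euler class with the Hurewicz image $h(\sigma)\in\HZ_k(N;\Z)$ --- and hypothesis (3) furnishes a generating subset $S\subseteq\pi_k(N)$ on which this pairing vanishes, so it is enough to attach cells only along $S$. Above dimension $k$ there is no remaining obstruction: the locally-finite formalism underlying \cref{sec:generalTransfer} does not require a global extension of the collapse map, and the higher homotopy of $N$ and $M$ is killed by matched cells without further conditions.

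The main obstacle is precisely this critical dimension $k$: one must reconcile two demands --- that the cells killing $\pi_k(M)$ have attaching spheres pushed down from $\pi_k(N)$, which is hypothesis (1), and that along these cells the normal bundle, its $A$-orientation, and hence its Thom class extend, which is the vanishing of the Euler pairing in hypothesis (3). Everything below dimension $k$ is controlled by the vanishing of $\pi_2(M),\dots,\pi_{k-1}(M)$ in hypothesis (2) together with the $(k-1)$-connectivity of $\Th(\nu)$, and everything above is handled by the general machinery. The sufficient conditions listed after hypothesis (3) are exactly the situations in which it is automatic: a trivial normal bundle has $e(\nu)=0$; if $\pi_k(M)=0$ one may take $S=\varnothing$; and if $\dim N<\tfrac12\dim M$ then $k>\dim N$, so $\HZ^k(N;\Z)=0$ and again $e(\nu)=0$. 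Once the input condition is checked, the extension $\HZ_\ast(\Bfree\Gamma;A)\to\HZ_{\ast-k}(\Bfree\pi;A)$ and the commutativity of \cref{eqn:general-extension-diagram} are delivered by \cref{sec:generalTransfer}.
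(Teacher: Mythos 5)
Your proposal follows essentially the same route as the paper's proof of \cref{thm:application-singular-homology}: reduce everything to condition (1) of the general criterion \cref{thm:transfer-extension} (singular homology having lf-restrictions by \cref{lem:sing-homology-has-lf}), build $\Efree\Gamma$ from $\ucov{M}$ by attaching free $\Gamma$-cells — none needed in dimensions $3,\dots,k$ by hypothesis (2) — kill $\pi_k(M)$ with $(k+1)$-cells attached along spheres coming from $S\subseteq\pi_k(N)$ so that the obstruction to extending the Thom class is exactly the Euler pairing of hypothesis (3), and observe that cells of dimension $>k+1$ carry no obstruction for degree reasons. The only points you gloss over are routine bookkeeping: the verification that the enlarged exceptional set $K$ remains $\Gamma$-locally $\pi$-precompact as cells are attached, and the fact that no separate cell structure on $\Bfree\pi$ built from $N$ is required, since $\Efree\Gamma$ with the restricted $\pi$-action already serves as $\Efree\pi$.
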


For the \(\pi_1\)-injective case this result appeared in the article of Engel~\cite{Engel}*{Main Theorem} but without the condition \labelcref{item:EulerClassCondition}.
However, the counterexample described in \cref{ex:EulerClassCondition} below shows that this condition is indeed necessary —even in the \(\pi_1\)-injective case.
Engel informed us that his proof nonetheless works under the additional assumption that \(\pi_k(M)\) is zero or that the normal bundle is trivial, see the updated article on the arXiv~\cite{EngelArXiv}.
These hypotheses are both covered by our condition~\labelcref{item:EulerClassCondition}.

\cref{theo:codim_homol}
already has interesting consequences for the positive scalar curvature problem provided that the rational analytic Novikov conjecture holds for $\Gamma$.
The following applications were also observed by Engel~\cite{Engel}*{p.~425}.

\begin{corollary}[see \cref{cor:application-singular-homology}]\label{cor:homol_KOessentialness}
  Assume that we are in the situation of \cref{theo:codim_homol} \parensup{with \(A=\Z\)}, that the assembly map \(\mu \otimes \Q \colon \KO_\ast(\Bfree \Gamma) \otimes \Q \to \KO_\ast(\Cstar \Gamma) \otimes \Q\) is injective, and that $M$ is spin. Moreover, assume that either
\begin{enumerate}\setlength\parskip{0pt}\setlength\itemsep{0pt}
\item The push forward of the homological fundamental class of \(N\) does not vanish in \(\HZ_\ast(\Bfree \pi; \Q)\), or
\item the normal bundle of $N\subset M$ is trivial and the push forward of the \textup{KO}-fundamental class of \(N\) \parensup{with the induced spin structure} in \(\KO_\ast(\Bfree \pi) \otimes \Q\) does not vanish.
\footnote{This can be rephrased as the condition that there exists a non-vanishing higher \(\hat{\mathrm{A}}\)-genus of \(N\) which —via the map \(\pi_1 N \twoheadrightarrow \pi\)— comes from a cohomology class of \(\pi\).}
\end{enumerate}
Then $\alpha(M)\neq 0$ and hence \(M\) does not admit a metric of positive scalar curvature.
\end{corollary}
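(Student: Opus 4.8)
The plan is to reduce the assertion $\alpha(M) \neq 0$ to the single statement that the $\KO$\nobreakdash-fundamental class of $M$ survives rationally in $\KO_m(\Bfree\Gamma)$, and then to verify that statement by two slightly different transfer arguments, one for hypothesis~(1) and one for hypothesis~(2). Concretely, write $\beta(M) \coloneqq (c_\Gamma)_\ast[M]_{\KO} \in \KO_m(\Bfree\Gamma)$ for the image of the $\KO$\nobreakdash-fundamental class of $M$. By the description of the Rosenberg index via the composition \eqref{eq:KO_sequence}, we have $\alpha(M) = \mu\bigl(\beta(M)\bigr)$, where $\mu$ denotes the assembly map $\KO_m(\Bfree\Gamma) \to \KO_m(\Cstar\Gamma)$ appearing in the statement. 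Since $\mu \otimes \Q$ is assumed injective, it suffices to show $\beta(M) \otimes \Q \neq 0$: this forces $\alpha(M) \neq 0$, and as $M$ is spin the Rosenberg index obstruction then excludes a metric of positive scalar curvature. So everything reduces to $\beta(M) \otimes \Q \neq 0$.

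For case~(1) I would run the fundamental class of $M$ through the rational version of \cref{eqn:general-extension-diagram}, which is available by \cref{theo:codim_homol}. Commutativity of the left-hand square together with the design property $\tau_{M,N}[M]_\Q = [N]_\Q$ of the classical transfer gives
\[
  \tau_{\Gamma,\pi}\bigl((c_\Gamma)_\ast[M]_\Q\bigr) = (c_\pi)_\ast\bigl(\tau_{M,N}[M]_\Q\bigr) = (c_\pi)_\ast[N]_\Q ,
\]
which is nonzero by hypothesis; hence $(c_\Gamma)_\ast[M]_\Q \neq 0$ in $\HZ_m(\Bfree\Gamma;\Q)$, i.e.\ $M$ is rationally essential. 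I would then pass to $\KO$ via the Pontryagin character isomorphism $\ph\colon \KO_m(\Bfree\Gamma)\otimes\Q \xrightarrow{\ \cong\ } \bigoplus_{j\geq 0}\HZ_{m-4j}(\Bfree\Gamma;\Q)$: by naturality of $\ph$ and the fact that the top-degree term of the Pontryagin character of the $\KO$\nobreakdash-fundamental class of a spin manifold is exactly its ordinary fundamental class (the corrections living in degrees $< m$), the degree-$m$ component of $\ph(\beta(M))$ equals $(c_\Gamma)_\ast[M]_\Q \neq 0$, so $\beta(M)\otimes\Q \neq 0$.

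For case~(2) the normal bundle $\nu$ is trivial. Then the Euler class hypothesis of \cref{theo:codim_homol} is automatic, and—this is the point—the construction of \cref{sec:generalTransfer} goes through under the same conditions on homotopy groups not only for singular homology but also for spin bordism and real $\K$\nobreakdash-homology, because the only theory-dependent ingredient beyond those conditions is the Thom class of $\nu$, which is now the trivial one. This produces the left part of \cref{eqn:KO-extension-diagram} for $\KO$, compatibly with the classical transfer; and for a trivial normal bundle the classical $\SpinBordism$\nobreakdash-transfer sends $[M]$ to the class of $N \hookrightarrow M$, so the induced $\KO$\nobreakdash-transfer carries $[M]_{\KO}$ to $[N]_{\KO}$ for the induced spin structure. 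Chasing $[M]_{\KO}$ through the relevant square of \cref{eqn:KO-extension-diagram} gives $\tau_{\Gamma,\pi}(\beta(M)) = (c_\pi)_\ast[N]_{\KO}$ in $\KO_{m-k}(\Bfree\pi)\otimes\Q$, which is nonzero by assumption; hence again $\beta(M)\otimes\Q \neq 0$.

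I expect the genuine work to lie in case~(2): one must check that the transfer machinery of \cref{sec:generalTransfer} really does specialize to $\KO$\nobreakdash-homology (or spin bordism) under precisely the hypotheses of \cref{theo:codim_homol} once $\nu$ is trivial—so that no obstruction beyond the now-vacuous Euler class condition intervenes—and that the resulting transfer is compatible with the classical one in the strong form used above. In case~(1) the only subtle point is the identification of the leading term of the Pontryagin character of the $\KO$\nobreakdash-fundamental class with the homological fundamental class, i.e.\ that it occurs with a nonzero (in fact unit) coefficient, so that rational nonvanishing is genuinely transported.
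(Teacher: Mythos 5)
Your reduction to $\beta(M)\otimes\Q\neq 0$ and your case (1) coincide with the paper's argument: push the rational homological fundamental class through the extended singular-homology transfer of \cref{theo:codim_homol}, and use that the degree-$m$ component of $\ph([M]_{\KO})$ is the ordinary fundamental class (the $\hat{\mathrm{A}}$-corrections sit in lower degrees). That part is fine.

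Case (2) is where you diverge, and your route has a genuine gap. You assert that for trivial $\nu$ the construction of \cref{sec:generalTransfer} \enquote{goes through} for spin bordism and $\KO$-homology under the hypotheses of \cref{theo:codim_homol}, because \enquote{the only theory-dependent ingredient beyond those conditions is the Thom class}. That is false: the proof of \cref{thm:application-singular-homology} extends the Thom class over the cells of dimension $>k+1$ attached to $\ucov{M}$ to build $\Efree\Gamma$ by invoking the \emph{dimension axiom}. For $E=\KO$ the obstruction to extending a class in $\KO^k$ over a free $(k+1+j)$-cell lies in $\KO^{k+1}(\disk^{k+1+j},\sphere^{k+j})\cong\KO^{-j}(\pt)$, which is nonzero for infinitely many $j$ and nonzero even rationally when $j\equiv 0\bmod 4$. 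This is exactly why the paper's $\K$/$\KO$-transfers on $\Bfree\Gamma$ are only obtained for $k\le 3$ and proceed by extending an \emph{ordinary} cohomology class (a classifying map to $\CP^\infty$ or $\sphere^1$) and pulling back a complex orientation, rather than by extending the $\KO$-Thom class directly; the corollary, however, is asserted in all codimensions. The paper's actual proof of (2) avoids constructing any $\KO$-transfer on $\Bfree\Gamma$: for trivial $\nu$ the diagram \labelcref{eq:ChernCharacterDefect} comparing $\ph$ with the classical transfer commutes in \emph{all} degrees (no $\hat{\mathrm{A}}(\nu)$ defect), so the hypothesis $(c_\pi)_*[N]_{\KO}\otimes\Q\neq 0$ becomes, via the rational isomorphism $\ph$, the nonvanishing of $(c_\pi)_*\tau_{M,N}\,\ph([M]_{\KO})=\tau_{\Gamma,\pi}\,(c_\Gamma)_*\,\ph([M]_{\KO})$ in $\bigoplus_l\HZ_{*}(\Bfree\pi;\Q)$, where $\tau_{\Gamma,\pi}$ is only the singular-homology transfer of \cref{theo:codim_homol} applied in the degrees $m-4l$. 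This forces $(c_\Gamma)_*\ph([M]_{\KO})\neq 0$, i.e.\ $\beta(M)\otimes\Q\neq 0$. You should replace your case (2) by this argument (or else actually prove the $\KO$-extension claim, which would be a far stronger statement than anything established in the paper).
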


For codimension two or three, we also have results for certain generalized cohomology theories.
The crucial observation which makes the next theorem work is that an orientation on a two-dimensional real vector bundle defines a complex structure on it.
This, in turn, induces an orientation in every complex oriented cohomology theory such as \(\spin^{\mathrm{c}}\) bordism and complex \(\K\)-theory (and up to inverting \(2\), also spin bordism and real K-theory).
\begin{theorem}[{see~\cref{thm:application-oriented-codim-2,thm:application-oriented-codim-3}}]\label{thm:application-complexted-oriented-short}
  Let $M$ be a closed connected manifold and $N \subset M$ a closed connected submanifold of codimension \(k \in \{2,3\}\) with oriented normal bundle \(\nu\).
    Let \(\Gamma \coloneqq \pi_1(M)\), and
$\pi\coloneqq{i_N}_*(\pi_1(N))<\pi_1(M)$. Moreover, assume that
\begin{itemize} 
\item
For $k=2$\textup{:} The induced map \(\pi_2(N) \to \pi_2(M)\) is surjective. Also, there exists a subset \(S \subseteq \pi_2(N)\) which generates \(\pi_2(M)\) such that the composition of the Hurewicz homomorphism $\pi_2(N)\to \HZ_2(N;\Z)$ with the first Chern class of $\nu$ vanishes on \(S\), where we use the orientation to view \(\nu\) as a complex line bundle.
\item
For $k=3$\textup{:} The induced map \(\pi_1(N) \to \pi\) is an isomorphism,
\(\pi_2(N) = 0 = \pi_2(M)\) and \(\pi_3(N) \to \pi_3(M)\) is surjective. Also,
$\nu$ is trivial and the Thom class coincides with the pullback of the three
times suspended unit $x\in E^3(\disk^3,\sphere^2)$ under the trivialization map.
\end{itemize}
  Then we have a commutative diagram
  \begin{equation*}
    \begin{tikzcd}
      \Omega^{\spin^{\mathrm{c}}}_\ast(M) \dar{\tau_{M,N}} \rar
        & \Omega^{\spinC}_\ast(\Bfree \Gamma) \dar{\tau_{\Gamma,\pi}} \rar
        & \K_\ast(\Bfree \Gamma) \dar{\tau_{\Gamma,\pi}}\\
      \Omega^{\spin^{\mathrm{c}}}_{\ast-k}(N) \rar
        & \Omega^{\spinC}_{\ast-k}(\Bfree \pi) \rar
        & \K_{\ast-k}(\Bfree \pi).
    \end{tikzcd}
  \end{equation*}
  Moreover, we obtain the analogous diagram for \(\spin\)-bordism and \(\KO\)-theory up to inverting \(2\).
\end{theorem}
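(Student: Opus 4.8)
The plan is to obtain both diagrams as instances of the general transfer construction of \cref{sec:generalTransfer}, by checking in each codimension the hypotheses of its extension criterion \cref{eqn:general-extension-diagram}. Since $\Omega^{\spinC}$ and complex $\K$-theory are complex-oriented multiplicative cohomology theories, and since bordism theories and $\K$-theory carry lf-restriction operations by \cref{sec:generalTransfer}, the equivariant transfer maps $E_\ast(\Bfree\Gamma)\to E_{\ast-k}(\Bfree\pi)$ are available for $E\in\{\Omega^{\spinC},\K\}$. First I would record that this construction is natural with respect to orientation-preserving transformations of complex-oriented theories; applied to the $\spin^{\mathrm c}$-orientation $\Omega^{\spinC}\to\K$, this makes the right-hand square of the asserted diagram commute once the left-hand square does, while the maps $\Omega^{\spinC}_\ast(M)\to\Omega^{\spinC}_\ast(\Bfree\Gamma)\to\K_\ast(\Bfree\Gamma)$ are just $(c_\Gamma)_\ast$ followed by the $\spin^{\mathrm c}$-orientation. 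Commutativity of the left-hand square with $\tau_{M,N}$ is exactly the content of \cref{eqn:general-extension-diagram} for $E=\Omega^{\spinC}$. Hence the theorem reduces to verifying hypotheses.

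For $k=2$ I would use that an orientation on the rank-two real bundle $\nu$ is the same datum as a complex line bundle structure, and hence that $\nu$ acquires a canonical Thom class $\theta\in E^2(\disk\nu,\sphere\nu)$ in every complex-oriented $E$, with associated $E$-theoretic Euler class equal to the image of $c_1(\nu)$. In codimension two the criterion \cref{eqn:general-extension-diagram} asks only for surjectivity of $\pi_2(N)\to\pi_2(M)$ — there is no intermediate-homotopy condition since $k-1=1$ — together with a generating set $S\subseteq\pi_2(N)$ of $\pi_2(M)$ on which the Hurewicz image, paired with the Euler class of $\nu$, vanishes. This is precisely the stated hypothesis for $k=2$, so the criterion applies and produces the $\Omega^{\spinC}$- and $\K$-diagrams.

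For $k=3$ the bundle $\nu$ is assumed trivial with $\theta$ the thrice-suspended unit, so the Euler class vanishes and the generating-set condition holds automatically with $S=\pi_3(N)$. The remaining, higher-connectivity hypotheses of \cref{eqn:general-extension-diagram} — bijectivity of $\pi_1(N)\to\pi$, vanishing of $\pi_j(M)$ for $2\le j\le k-1$, and surjectivity of $\pi_k(N)\to\pi_k(M)$ — are furnished by the assumptions $\pi_1(N)\xrightarrow{\iso}\pi$, $\pi_2(N)=0=\pi_2(M)$, and surjectivity of $\pi_3(N)\to\pi_3(M)$. Applying the criterion again gives the diagram.

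To obtain the $\SpinBordism$- and $\KO$-versions I would invoke, as recalled before the statement, that a complex orientation on $\nu$ induces, after inverting $2$, compatible Thom classes in $\SpinBordism$ and $\KO$, and then run the identical argument with $E\in\{\SpinBordism[1/2],\KO[1/2]\}$, using that localization at $\Z[1/2]$ preserves the lf-restriction structure; when $k=3$ the bundle $\nu$ is already trivial, so these statements in fact hold integrally. The genuinely substantial input is \cref{eqn:general-extension-diagram} itself, which I take as given; modulo that, the main care needed here is the precise translation between the hypotheses as stated and those of the criterion — in particular, seeing that the obstruction to extending the classical transfer over the cells of $\Bfree\Gamma$ built from $M$ is governed by the Euler class, so that the condition on $S$ is exactly what annihilates it — together with the routine but delicate bookkeeping that the $\Z[1/2]$-localization and the transformation $\Omega^{\spinC}\to\K$ respect all the Thom classes and orientations involved.
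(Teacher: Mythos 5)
Your overall architecture is right — reduce everything to the equivariant transfer construction and \cref{thm:transfer-extension}, get the right-hand square from naturality of the construction under the orientation-preserving transformation $\Omega^{\spinC}\to\K$, and handle spin bordism and $\KO$ by inverting $2$ — and this is also how the paper proceeds. But there is a genuine gap in how you verify the hypothesis of the criterion. \cref{thm:transfer-extension} does not ``ask for'' surjectivity of $\pi_k(N)\to\pi_k(M)$ together with a vanishing Euler-class pairing on a generating set; it asks for a \emph{lift of the $E$-theoretic Thom class} to $E^k_\pi(\Efree\Gamma,\ccomp{\Efree\Gamma}{K_\Efree})$. The homotopy-theoretic conditions you quote are the hypotheses of \cref{thm:application-singular-homology}, and they suffice there only because singular homology satisfies the dimension axiom: when $\Efree\Gamma$ is built from $\ucov{M}$ by attaching cells, the obstruction to extending the class over a cell of dimension $k+1$ is indeed the Euler-class pairing, but over cells of dimension $>k+1$ the obstruction vanishes automatically only for ordinary cohomology. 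For a general complex-oriented $E$ these higher obstructions live in nontrivial groups, so the extension argument you gesture at (``the obstruction \dots is governed by the Euler class'') breaks down beyond the first stage; the paper explicitly cautions that no analogue of \cref{thm:application-singular-homology} can hold for arbitrary theories and orientations.

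The missing idea — and the reason the theorem concerns \emph{complex-oriented} theories in codimension $\le 3$ — is to route the extension problem through $\CP^\infty=K(\Z,2)$. For $k=2$ the $E$-Thom class is $\theta=f^\ast(x)$, where $f\colon(\disk\nu,\sphere\nu)\to(\CP^\infty,\pt)$ classifies the complex line bundle and $x\in E^2(\CP^\infty,\pt)$ is the universal orientation; one extends the \emph{ordinary} class $[f]\in\HZ^2_\pi(\ucov{M},\ccomp{\ucov{M}}{\ucov{\disk\nu}^\circ})$ over $\Efree\Gamma$ exactly as in \cref{thm:application-singular-homology} (this is where your homotopy hypotheses legitimately enter), and then sets $\theta_\Efree=f_\Efree^\ast(x)$. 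For $k=3$ one writes $\theta=\delta f^\ast(x)$ with $f\colon\sphere\nu\to\CP^1\subset\CP^\infty$ given by the trivialization, uses the relative Hurewicz theorem — this is where $\pi_1(N)\xrightarrow{\cong}\pi$, $\pi_2(N)=0=\pi_2(M)$ and $\pi_3$-surjectivity are actually consumed, not merely as ``the generating-set condition holds with $S=\pi_3(N)$'' — to extend $f$ over $\ccomp{\bar M}{\disk\nu^\circ}$, then over the added cells of dimension $\ge 4$ (unobstructed for a degree-$2$ integral class), and finally takes $\theta_\Efree=\delta f_\Efree^\ast(x)$. Inserting this reduction to ordinary cohomology repairs the argument; as written, condition (1) of \cref{thm:transfer-extension} for $E=\Omega^{\spinC}$ or $E=\K$ is not established.
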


In \cref{subsec:extension-to-eub}, which is again based on \cite{NitscheThesis}, we consider once more the most general case of a generalized cohomology theory. We give a sufficient condition for extending the transfer map to the classifying space for proper actions in terms of group cohomology.
Except for very low codimension, however, the condition depends on the group $\pi_1(M)$. The best result that we get independent of $\pi_1(M)$ is in codimension $2$ when the normal bundle of $N\subset M$ is trivial. Specialized to spin bordism and real K-Theory, \cref{thm:application-trivialized-codim-2} gives:

\begin{theorem}\label{thm:trivialized-codim-2-short}
Let $M$ be a closed connected spin manifold and $N \subset M$ a closed connected submanifold of codimension $k=2$ with trivial normal bundle.
Let \(\Gamma \coloneqq \pi_1(M)\) and $\pi\coloneqq \pi_1(N)$.
Moreover, assume that $\pi_1(N)\to\pi_1(M)$ is injective and that
$\pi_2(N)\to\pi_2(M)$ is surjective.

Then we have the following commutative diagram:
\begin{equation*}
\resizebox{\linewidth}{!}{\begin{tikzcd}[ampersand replacement=\&]
\Omega_*^{\spin}(M) \rar \dar["\tau_{M,N}"] \&
\Omega_*^{\spin}(\Bfree \Gamma) \rar \dar["\tau_{\Gamma,\pi}"] \&
\KO_\ast(\Bfree \Gamma) \dar["\tau_{\Gamma,\pi}"] \rar \&
\KO^\Gamma_{\ast}(\Eub \Gamma) \rar \dar["\underline{\tau}_{\Gamma,\pi}"] \&
\KO_\ast(\Cstar \Gamma) \\
\Omega_{\ast-k}^{\spin}(N) \rar \&
\Omega_{\ast-k}^{\spin}(\Bfree \pi) \rar \&
\KO_{\ast-k}(\Bfree \pi)\rar \&
\KO^\pi_{\ast-k}(\Eub \pi) \rar \&
\KO_{\ast-k}(\Cstar \pi)
\end{tikzcd}}
\end{equation*}
\end{theorem}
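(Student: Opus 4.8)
The plan is to deduce \cref{thm:trivialized-codim-2-short} by specializing the general machinery of \cref{sec:generalTransfer}, \cref{subsec:extension-to-eub} and \cref{sec:applications} to the homology theories $\Omega^{\spin}_*$ and $\KO_*$ in the codimension-two, trivial-normal-bundle case. The statement is essentially the conjunction of three pieces: (a) the classical transfer square on the left, relating $\Omega^{\spin}_*(M)\to\Omega^{\spin}_{*-2}(N)$ and $\Omega^{\spin}_*(\Bfree\Gamma)\to\Omega^{\spin}_{*-2}(\Bfree\pi)$; (b) the extension of $\tau_{\Gamma,\pi}$ across $\Omega^{\spin}_*(\Bfree\Gamma)\to\KO_*(\Bfree\Gamma)$ and further to $\KO^\Gamma_*(\Eub\Gamma)$; and (c) commutativity of the rightmost square up to the assembly map. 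So the first thing I would do is invoke \cref{theo:codim_homol}'s analogue for $\Omega^{\spin}$ and $\KO$ — i.e. the general criterion \cref{eqn:general-extension-diagram} together with the lf-restriction operation established for bordism and K-theory — observing that for $k=2$ with trivial normal bundle the hypotheses of that criterion reduce exactly to $\pi_1$-injectivity and surjectivity of $\pi_2(N)\to\pi_2(M)$: the Euler/Chern-class condition is automatic since $\nu$ is trivial, so $S=\pi_2(N)$ works.

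Next I would address the $E$-orientation bookkeeping. Since $M$ is spin and $\nu$ is trivial, $N$ inherits a spin structure, so both $M$ and $N$ are canonically oriented for $\Omega^{\spin}_*$ and (via the $\hat{A}$-class) for $\KO_*$, and the Thom class $\theta\in E^2(\disk\nu,\sphere\nu)$ is the pullback of the doubly suspended unit under the trivialization. This is what makes $\tau_{M,N}$ the classical transfer and also what feeds into \cref{thm:application-trivialized-codim-2}. I would therefore structure the proof as: first cite \cref{theo:codim_1_long}-style reasoning is \emph{not} what we want here; rather cite \cref{thm:application-trivialized-codim-2} (the $E$-theoretic statement the theorem says it specializes) and check that spin bordism and $\KO$ satisfy its running hypotheses — namely that they admit an lf-restriction operation (proved in \cref{sec:generalTransfer}) and are complex-orientable in the weak sense needed, which for the trivial-normal-bundle case is vacuous because we never pass through a genuine complex structure; the trivialization handles everything. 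The middle and right squares of the displayed diagram then come directly from the equivariant transfer $E_*^\Gamma(\Eub\Gamma)\to E_{*-k}^\pi(\Eub\pi)$ constructed there, using $\pi_1(N)\xrightarrow{\sim}\pi$ (here $\pi=\pi_1(N)$ by definition, and injectivity of $\pi_1(N)\to\pi_1(M)$ makes this literally an isomorphism onto its image) and surjectivity of $\pi_2$ to verify the group-cohomological sufficient condition of \cref{subsec:extension-to-eub} is met.

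The key steps in order: (i) record that $N$ is spin with a canonical structure and that $\tau_{M,N}$ sends $[M]$ to $[N]$, i.e.\ the leftmost vertical and the bottom-left horizontal commute — this is the classical transfer and the naturality of the Atiyah–Bott–Shapiro / $\hat{A}$ orientation under the trivialized inclusion; (ii) verify the hypotheses of \cref{eqn:general-extension-diagram} / \cref{thm:application-trivialized-codim-2} for $E_*=\Omega^{\spin}_*$ and $E_*=\KO_*$, which reduces to $\pi_1$-injectivity plus $\pi_2$-surjectivity because the normal bundle is trivial (so the obstruction class involving the Euler class vanishes identically); (iii) obtain the dashed arrows $\tau_{\Gamma,\pi}$ on $\Omega^{\spin}_*(\Bfree\Gamma)$, $\KO_*(\Bfree\Gamma)$ and $\underline\tau_{\Gamma,\pi}$ on $\KO^\Gamma_*(\Eub\Gamma)$ from that general construction, with commutativity of all squares through $\KO^\Gamma_*(\Eub\Gamma)$ being part of the general output; (iv) note that no transfer on the $\Cstar$-algebra level is asserted, so the final column of the diagram is just the assembly map with no vertical arrow, and there is nothing further to prove there.

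The main obstacle I expect is step (ii) together with the compatibility across the $\Omega^{\spin}_*(\Bfree\Gamma)\to\KO_*(\Bfree\Gamma)$ change-of-theory map: one must check that the lf-restriction operations for $\Omega^{\spin}$ and $\KO$ are intertwined by the orientation transformation, so that the two instances of $\tau_{\Gamma,\pi}$ in the diagram are genuinely compatible and the middle-left square commutes — this is where the bulk of \cref{sec:generalTransfer}'s naturality statements are actually used. A secondary subtlety is matching the codimension-two ``complex structure from an orientation'' discussion of \cref{thm:application-complexted-oriented-short} with the present \emph{trivialized} setting: here we do not need the complex structure at all, because the trivialization gives the Thom class directly as a suspended unit, so the argument is in fact cleaner than the oriented case and the ``up to inverting $2$'' caveat is unnecessary — $\KO$ works on the nose. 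I would make that point explicitly to reassure the reader that the hypotheses are exactly $\pi_1$-injective plus $\pi_2$-surjective with nothing hidden.
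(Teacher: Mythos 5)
Your proposal is correct and follows the paper's own route: the paper obtains \cref{thm:trivialized-codim-2-short} precisely by specializing \cref{thm:application-trivialized-codim-2} to $E=\Omega^{\spin}$ and $E=\KO$ (both of which have lf-restrictions by \cref{lem:K-homology-has-lf} and \cref{rem:cobordism-has-lf}), with the compatibility of the two middle columns supplied by the naturality of the extensions under the Atiyah--Bott--Shapiro transformation, and with no vertical arrow asserted on the $\Cstar$-algebra column. Your observations that the trivialization makes the complex-orientation discussion and the Euler-class condition irrelevant, and that $\KO$ works without inverting $2$, match the paper exactly.
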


Note that the assumptions in the preceding theorem precisely match those of \cref{theo:obstruc_codim_2}. It seems very likely that the resulting transfer maps are compatible with Kubota's map on the K-Theory of the maximal group $\Cstar$-algebras $\KO_*(\CstarMax\Gamma)\to\KO_{*-2}(\CstarMax\pi)$ \cite{Kubota}*{proof of Theorem 6.11}, since both Kubota's and our maps arise from a quite \enquote{natural} construction.
The problem with comparing these maps is that our general transfer construction is defined in a very geometric way, and in a setting that is much more general than K-theory. Kubota's construction, on the other hand, relies on tools involving operator algebras such as K-theory with coefficients in the Calkin algebra.

Another obvious line of inquiry would be to further relax the conditions under which the transfer map can be extended in various generalized cohomology theories.
We note, however, that some assumptions on the homotopy type of $M$ and $N$ and on the generalized cohomology theory will always be necessary.

One cannot expect, for example, that the mere existence of a submanifold inclusion was enough to obtain the transfer on group homology. This is only conceivable if the manifolds resemble to a suitable degree the classifying spaces of the groups.
Furthermore,
we do not expect a version of \cref{thm:application-singular-homology} for arbitrary cohomology theories and arbitrary prescribed orientations on the normal bundle.
A —perhaps artificial— counterexample in \(\KO\)-theory is an embedding \(\sphere^1 \subset \sphere^{k+1}\), where we endow the normal bundle with the \(\KO\)-orientation which induces the \enquote{antiperiodic} spin structure on \(\sphere^1\).
However, it is not clear where the precise frontier lies, and hence it would be of interest to find more (counter-)examples.

\section{Geometric setup}\label{sec:setupGeom}
In this short section, we describe the general geometric setup that will be used in the technical sections of this article.
\begin{setup}\label{setup:geom}
Fix a codimension \(k \in \N\).
Let \(M\) and \(N\) be finite CW-complexes and let \(\nu\) be a real vector bundle of fiber dimension \(k\) over \(N\), and let there be an embedding \(\tn \colon \nu \hookrightarrow M\) of the total space of \(\nu\) as an open subset of \(M\).
Usually, we will use the map \(\tn\) implicitly and consider \(\nu\) as a subset of \(M\).

Let \(\sphere \nu \subset \disk \nu \subset \nu \subset M\) be the unit sphere and disk bundles associated to \(\nu\), respectively.
Fix a base-point $x_0 \in N \subset \nu \subset M$.
Let $\Gamma \coloneqq \pi_1(M, x_0)$ and let \(\pi \leq \Gamma\) be the image of the map \(\pi_1(N, x_0) \to \pi_1(M, x_0)\) induced by the restriction of the embedding \(\nu \hookrightarrow M\) to the zero section.
Let $\tilde{p} \colon \ucov{M} \to M$ denote the universal covering of \(M\).
Let $\bar{M} = \pi \backslash \ucov{M}$, and $\underline{p} \colon \ucov{M}
\to \bar{M}$ and $p \colon \bar{M} \to M$ be the corresponding covering maps.
Fix a base-point $\tilde{x}_0 \in \tilde{p}^{-1}(x_0)$ and let $\bar{x}_0 = \underline{p}(\tilde{x}_0) \in \bar{M}$.
Then the embedding \(\tn \colon \nu \hookrightarrow M\) uniquely lifts to an embedding \(\bar{\tn} \colon \nu \hookrightarrow \bar{M}\) taking \(x_0\) to \(\bar{x}_0\).
Then we let \(\ucov{\disk\nu} \to \disk \nu\) be the restriction of the covering \(\ucov{M} \to \bar{M}\) via \(\bar{\tn} \colon \disk \nu \hookrightarrow \bar{M}\).
We obtain an embedding \(\tilde{\tn} \colon \ucov{\disk\nu} \hookrightarrow \ucov{M}\) as a \(\pi\)-compact subset.
Similarly, define \(\ucov{\sphere \nu}\) and \(\ucov{N}\) to be the restrictions to \(\sphere \nu\) and \(N\), respectively.
\end{setup}

\begin{remark}
  The covering \(\ucov{N} \to N\) is the connected normal covering of \(N\) with \(\pi_1 \ucov{N} = \ker(\pi_1 N \twoheadrightarrow \pi)\).
  In particular, if the map \(\pi_1 N \to \pi\) is an isomorphism, then \(\ucov{N}\) is the universal covering of \(N\).
\end{remark}

\begin{remark}
  The typical situation we consider is that \(N \subset M\) is a submanifold embedding of codimension \(k\) with normal bundle \(\nu\) which is embedded into \(M\) as a tubular neighborhood of \(N\).
\end{remark}

\section{Codimension one transfer via KK-theory}\label{sec:TransferKK}
In this section, we state and prove the precise
version of Theorem \ref{theo:codim_1_short}.

\begin{theorem}\label{theo:codim_1_long}
  Let $M$ be a closed spin manifold and $N\subset M$ a submanifold of
  codimension $1$ with trivial normal bundle, both connected. Set
  $\pi:=\im(\pi_1(N)\to
  \pi_1(M)=\Gamma)$.
  Then the dashed homomorphisms in the following diagram exist and make it commutative.
  \begin{equation}\label{eq:codim1Transfer}
  \resizebox{0.9 \linewidth}{!}{
  \begin{tikzcd}[ampersand replacement=\&]
    \Omega_*^{\spin}(M) \rar["c_\ast"] \dar["\tau_{M,N}"] \& \Omega_*^{\spin}(\Bfree \Gamma) \rar["\alpha"] \dar[dashed,"\tau_{\Gamma, \pi}"] \& \KO_\ast(\Bfree \Gamma) \dar[dashed,"\tau_{\Gamma, \pi}"] \rar["t"]
      \& \KO^\Gamma_{\ast}(\Eub \Gamma) \rar["\mu^\Gamma"] \dar[dashed,"\underline{\tau}_{\Gamma, \pi}"] \& \KO_\ast(\Cstar \Gamma) \dar[dashed,"\sigma_{\Gamma, \tau}"] \\
    \Omega_{\ast-1}^{\spin}(N) \rar["\bar{c}_\ast"] \& \Omega_{\ast-1}^{\spin}(\Bfree \pi) \rar["\alpha"] \& \KO_{\ast-1}(\Bfree \pi)\rar["t"]
        \& \KO^\pi_{\ast-1}(\Eub \pi) \rar["\mu^\pi"] \& \KO_{\ast-1}(\Cstar \pi),
  \end{tikzcd}
    }
  \end{equation}
    The remaining arrows are defined as follows.
    The map $\tau_{M, N}$ on the left-hand side is induced by the Pontryagin--Thom collapse $M \to \Sigma N$ \parensup{via the trivial normal bundle}.
  We use the map $c\colon M\to \Bfree \Gamma$ inducing the identity
  on fundamental groups, and $\bar{c} \colon N \to \Bfree \pi$ the composition of $N \to \Bfree\pi_1(N)$ which is
  the identity on the fundamental group and
  $\Bfree \pi_1(N)\to \Bfree\pi$ induced by $\pi_1(N)
  \onto \pi$.
  The map $\alpha$ is the Atiyah orientation from spin bordism to real
  \(\K\)-homology.
  Furthermore, $t$ is the composition of the transfer isomorphism
  $\KO_*(\Bfree\Gamma)\to \KO_*^\Gamma(\Efree\Gamma)$ with the map induced by the unique
 homotopy class of  $\Gamma$-equivariant maps
  $\Efree\Gamma\to\Eub\Gamma$ \parensup{using that $\Eub\Gamma$ is a
  universal space for proper $\Gamma$-actions}.
  Finally, $\mu^\Gamma$ and $\mu^\pi$ are the Baum--Connes assembly maps
  corresponding to $\Gamma$ and $\pi$, respectively.
\end{theorem}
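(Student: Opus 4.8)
Since $\nu$ is trivial, fix a trivialization $\nu\iso N\times\reals$; the Pontryagin--Thom collapse of the complement of the bicollar $N\times(-1,1)$ is a map $M\to\mathrm{Th}(\nu)\iso\Sigma N$, inducing on spin bordism $\tau_{M,N}\colon\SpinBordism_*(M)\to\SpinBordism_*(\Sigma N)\iso\SpinBordism_{*-1}(N)$, which sends $[M]$ to $[N]$ with the induced spin structure. The heart of the proof is the remark that \emph{the same collapse is available on every connected cover of $M$ to which $N$ lifts}, with no hypothesis on homotopy groups, because $N$ is compact and two-sided --- this is precisely why codimension one is unconditional. I would run the construction on the intermediate cover $\bar{M}=\pi\backslash\ucov{M}$ of \cref{setup:geom}: there $N$ lifts to a compact codimension-one submanifold with trivialized normal bundle, $\pi_1(\bar{M})=\pi$, and $\bar{c}\colon N\to\Bfree\pi$ is the composite $N\hookrightarrow\bar{M}\to\Bfree\pi$. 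Collapsing the complement of the bicollar of $N$ inside $\bar{M}$ gives $q\colon\bar{M}\to\Sigma N$, and this single map carries the degree shift.

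From $q$ I would extract the four dashed maps. Composing the locally finite cover transfer $E_*(M)\to E^{\lf}_*(\bar{M})$ for $\bar{M}\to M$ with $q_*$ --- which lands in ordinary $E$-homology since $\Sigma N$ is compact, hence, after suspension, in $E_{*-1}(N)$ --- and with $\bar{c}_*$ produces a transfer $E_*(M)\to E_{*-1}(\Bfree\pi)$ for any generalized homology theory $E$; the corresponding transfers on $\SpinBordism_*(\Bfree\Gamma)$, $\KO_*(\Bfree\Gamma)$ and (equivariantly) on $\KO^\Gamma_*(\Eub\Gamma)$, with $\KO^\pi_*(\Eub\Gamma)\iso\KO^\pi_*(\Eub\pi)$, come from transporting this correspondence along the classifying maps $c$ and their $\Gamma$-equivariant lifts to $\Efree\Gamma$ and $\Eub\Gamma$. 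For the $\KO$-homology and $C^\ast$-algebra layers KK-theory enters, justifying the title: locally finite $\KO$-homology of the cover and $K$-theory of the group $C^\ast$-algebra are tied together by the assembly maps, and $q$, read as a Clifford-linear family over the bicollar of the lift of $N$, descends to a Kasparov class in $\KK_1(\Cstar\Gamma,\Cstar\pi)$ whose Kasparov product gives $\sigma_{\Gamma,\pi}\colon\KO_*(\Cstar\Gamma)\to\KO_{*-1}(\Cstar\pi)$, reproving and completing Zeidler's transformation, for $\Cstar$ either $\CstarRed$ or $\CstarMax$.

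Commutativity of the four squares then follows from naturality. The left square says that $\tau_{\Gamma,\pi}$ restricts along $c$ to $\tau_{M,N}$. The second commutes because the Atiyah orientation $\alpha\colon\SpinBordism_*\to\KO_*$ is natural and compatible with the cover transfer and with $q$. The third commutes because the transfer $t$ to the equivariant picture is natural and the collapse-on-the-cover construction is visibly $\Gamma$-equivariant, hence compatible with $\Efree\Gamma\to\Eub\Gamma$. The rightmost square, $\mu^\pi\circ\underline{\tau}_{\Gamma,\pi}=\sigma_{\Gamma,\pi}\circ\mu^\Gamma$, reduces to identifying the $\KK_1$-class above, after Kasparov descent and assembly, with the geometric equivariant transfer $\underline{\tau}_{\Gamma,\pi}$ --- i.e.\ to showing that the operator-algebraic boundary construction is the $C^\ast$-algebraic shadow of collapsing the bicollar.

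The main obstacle I anticipate is exactly this last compatibility. The first three squares are controlled by honest maps of spaces and spectra and by pure naturality, but the rightmost one forces a match between an operator-algebraic construction (Kasparov modules, descent, the assembly map) and a geometric one, and here all the choices --- the trivialization of $\nu$, the cover $\bar{M}$, the orientation of the collar, and the conventions entering $\alpha$ --- must be threaded together consistently; getting a genuine transfer valued in $\KO_{*-1}(\Cstar\pi)$ rather than in a coarse or locally finite surrogate is the delicate point not supplied by the general geometric machinery. A secondary technical issue is that neither $\pi_1$-injectivity of $N\hookrightarrow M$ nor connectedness of $M\setminus N$ is assumed, so every cover must be formed with the image group $\pi$ rather than $\pi_1(N)$, and the partitioning of $\ucov{M}$ by the lifts of $N$, together with the Mayer--Vietoris/boundary argument on the $C^\ast$-level, must be set up in this generality.
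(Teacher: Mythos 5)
Your overall architecture matches the paper's: work on the intermediate cover $\bar{M}=\pi\backslash\ucov{M}$, exploit that the lift of $N$ partitions $\bar{M}$ into two pieces, encode the collapse as a boundary class, and push it through induction, descent and assembly to get $\underline{\tau}_{\Gamma,\pi}$ and $\sigma_{\Gamma,\pi}$. You also correctly single out the compatibility of the operator-algebraic class with the geometric collapse as the delicate point; in the paper this is exactly the content of \cref{prop:boundaryElement} (the computation $\Ind_\pi^\Gamma(\partial^\pi_{\ucov{M}})=\partial_{\Gamma/\pi}\otimes 1$), which rests on the finiteness statement of \cref{lem:finitelyManyClasse} and on the explicit class $\partial_{\Gamma/\pi}\in\KK^\Gamma_{-1}(\R,\Cz(\Gamma/\pi))$ built from the two-sided cut-off function $\chi$.

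However, there is a genuine gap in your treatment of the second and third columns. You say the transfers on $\Omega^{\spin}_*(\Bfree\Gamma)$ and $\KO_*(\Bfree\Gamma)$ ``come from transporting this correspondence along the classifying maps,'' but the classifying map points the wrong way: a map defined on $E_*(M)$ by collapsing a bicollar in $M$ does not induce anything on $E_*(\Bfree\Gamma)$, and naturality can only verify commutativity of a square once both vertical arrows already exist. One genuinely has to produce a collapse on $\Bfree\Gamma$ itself. The paper does this by observing that, by van Kampen, $\Gamma$ is either an amalgamated product $\Gamma_+*_\pi\Gamma_-$ (if $N$ separates) or an HNN extension of $\im(\pi_1 W\to\Gamma)$ over $\pi$ (if not), which permits building a model of $\Bfree\Gamma$ from $M$ by attaching cells away from the bicollar, so that $\Bfree\pi\times[-1,1]\subset\Bfree\Gamma$ extends $N\times[-1,1]\subset M$ and the collapse $\Bfree\Gamma\to\Sigma\Bfree\pi$ literally extends $M\to\Sigma N$; a colimit over suitable compact pairs, using the naturality clause of \cref{thm:generalKKtransfer}, then handles the case where this $\Bfree\Gamma$ is not a finite complex. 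None of this group-theoretic input appears in your argument, and without it (or a substitute such as the lf-restriction machinery of \cref{sec:generalTransfer}, which for a degree-one Thom class extends without obstruction) the dashed arrows $\tau_{\Gamma,\pi}$ in the bordism and $\KO_*(\Bfree\Gamma)$ columns are simply not defined. A second, smaller imprecision: your $q_*$ out of locally finite homology of $\bar M$ cannot land in $E_*(\Sigma N)$, since the preimage of the basepoint is non-compact; the correct formulation is the open restriction $E^{\lf}_*(\bar M)\to E^{\lf}_*(N\times(-1,1))\cong E_{*-1}(N)$, which is how the paper sets up $\partial^\pi_{\ucov{M}}$.
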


  Our definitions of the maps \(\underline{\tau}_{\Gamma, \pi}\) and \(\sigma_{\Gamma, \pi}\) are based on a direct construction of a suitable boundary class in equivariant KK-theory similar to the work of Oyono–Oyono~\cite{oyonooyono} on groups acting on trees.
  This is not surprising because the situation of \cref{theo:codim_1_long} implies that \(\Gamma\) is an amalgamated product or an HNN-extension.
  The previous construction of Zeidler~\cite{ZeidlerSubmf}*{Remark 1.8} on the
  level of the group \(\Cstar\)-algebra is based on a different method: it uses coarse index theory and the partitioned manifold index theorem.
  We do not prove that these two approaches yield the same map but nevertheless strongly expect that to be the case.

To prove the theorem, we consider a slightly more abstract setup based on \cref{setup:geom}.

\begin{setup} \label{setup:KKTransfer}
Suppose that we are in the situation of \cref{setup:geom} with \(k = 1\) and where \(\nu\) is the trivial line bundle.
In this section, we will keep the embedding \(\tn \colon N \times \R \hookrightarrow M\) explicit in our notation.
Hence the chosen base-point \(x_0 \in M\) is of the form \(x_0 = \tn(y_0, 0)\) for some \(y_0 \in N\).
Since $\pi_1(\bar{M}, \bar{x}_0) = \pi = \tn_{\ast}(\pi_1 (N \times \{0\}, y_0))$, codimension one and a Mayer--Vietoris argument imply that $\bar{M} \setminus \bar{\tn}(N \times \{0\})$ consists of two connected components.
We denote the closures of these components by $\bar{M}_\pm$, where the sign in the notation is determined by the requirement $\bar{\tn}(N \times \{\pm 1 \}) \subseteq \bar{M}_\pm$.
\end{setup}

We assume \cref{setup:KKTransfer} in the following.
Fix an odd function $\chi_0 \colon \R \to [-1,1]$ with $\chi_0(\pm t) = \pm 1$ for $t \geqslant 1$.
Then we define a function $\chi \colon \bar{M} \to [-1,1]$ by
\[
  \chi(\bar{x}) = \begin{cases}
                   \chi_0(t) & \bar{x} = \bar{\tn}(y, t) \in \bar{\tn}(N \times [-1,1]), \\
                   \pm 1 & \bar{x} \in M_\pm \setminus \bar{\tn}(N \times [-1,1]).
                  \end{cases}.
\]
Let $\tilde{\chi} \colon \ucov{M} \to [-1,1]$ denote the pullback of $\chi$ to $\ucov{M}$.

\begin{lemma}\label{lem:finitelyManyClasse}
  Let $K \subseteq \ucov{M}$ be a compact subset.
  Then there are only finitely many different cosets $\gamma \pi \in \Gamma / \pi$ such that
  \[ \sup_{\tilde{x}_1, \tilde{x}_2 \in K} \left| \tilde{\chi}(\gamma^{-1} \tilde{x}_1 ) - \tilde{\chi}(\gamma^{-1} \tilde{x}_2) \right| \neq 0.\]
\end{lemma}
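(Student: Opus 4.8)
The plan is to reduce the statement to a statement about the function $\chi$ on the quotient $\bar M$. The key point is that $\tilde\chi$ is the pullback of $\chi\colon\bar M\to[-1,1]$ along $\underline p\colon\ucov M\to\bar M$, and that $\chi$ is \emph{locally constant outside} the compact set $\bar{\tn}(N\times[-1,1])\subseteq\bar M$. Concretely: the set $U\coloneqq\bar M\setminus\bar{\tn}(N\times(-1,1))$ is an open subset of $\bar M$ on which $\chi$ takes only the two values $\pm 1$, and is constant on each of $\bar M_+$ and $\bar M_-$. So the only place where $\chi$ can "vary" is over the compact subset $C\coloneqq\bar{\tn}(N\times[-1,1])\subseteq\bar M$.

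First I would rephrase the condition on $\gamma\pi$: since $\tilde\chi=\chi\circ\underline p$ and $\underline p$ is $\Gamma$-equivariant for the left $\Gamma$-action on $\ucov M$ (with $\underline p$ intertwining $\gamma^{-1}$ with the residual action — more precisely, $\underline p(\gamma^{-1}\tilde x)$ depends only on the coset $\gamma\pi$ because $\pi=\ker(\Gamma\curvearrowright\bar M$ through deck transformations$)$, i.e. $\pi$ acts trivially... actually $\bar M=\pi\backslash\ucov M$, so $\Gamma$ does \emph{not} act on $\bar M$; instead $\underline p(\gamma^{-1}\tilde x)$ for varying representatives of $\gamma\pi$ traces out the same subset). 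The cleanest route: the supremum in the statement is nonzero iff the compact set $\gamma^{-1}K$ meets \emph{both} "halves" in a way that detects the variation of $\tilde\chi$, equivalently iff $\gamma^{-1}K$ is \emph{not} contained in a single region where $\tilde\chi$ is constant. Now $\tilde\chi$ is constant on $\ucov M\setminus \tilde{\tn}(\ucov N\times(-1,1))$ minus... — more carefully, $\tilde\chi$ is constant on each connected component of the preimage $\underline p^{-1}(U)$, and these components are exactly the $\Gamma$-translates of the two "ends"; in any case $\tilde\chi$ is locally constant on the open set $\underline p^{-1}(U)=\ucov M\setminus \tilde{\tn}(\ucov N\times(-1,1))$, whose complement $\tilde C\coloneqq\tilde{\tn}(\ucov N\times[-1,1])$ is $\pi$-cocompact (it is the restriction of $C$). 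Thus: if $\gamma^{-1}K\cap\tilde C=\emptyset$ then $\gamma^{-1}K$ lies in $\underline p^{-1}(U)$; but this alone need not force $\tilde\chi$ constant on it since $\underline p^{-1}(U)$ may be disconnected — however, $\tilde\chi$ takes value $+1$ on the "plus" components and $-1$ on the "minus" components, and one checks that a connected (hence $K$-image-connected, after enlarging $K$ to be connected, which is harmless) translate can meet at most one. So the supremum vanishes whenever $\gamma^{-1}K\cap\tilde C=\emptyset$.

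It therefore suffices to bound the number of cosets $\gamma\pi$ with $\gamma^{-1}K\cap\tilde C\neq\emptyset$. Equivalently, replacing $\gamma$ by $\gamma^{-1}$ (a bijection on cosets up to relabelling, or arguing directly), we must show that $\{\gamma\pi: \gamma K\cap \tilde C\neq\emptyset\}$ is finite. Project to $\bar M$: the image $\underline p(\tilde C)=C$ is compact, and we ask for which $\gamma$ the translate $\gamma K$ has $\underline p$-image meeting $C$ — but $\underline p(\gamma K)$ for $\gamma$ ranging over a fixed coset $\gamma\pi$ is a well-defined compact subset of $\bar M$ (since $\pi$ acts on $\ucov M$ with quotient $\bar M$). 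So we need: the compact set $\underline p(K)$... no — we need the number of distinct compact sets $\underline p(\gamma K)\subseteq\bar M$, $\gamma\pi\in\Gamma/\pi$, that meet $C$. Here I would use that $K$ is compact and $\underline p$ is a covering map, hence $\underline p(K)$ is compact in $\bar M$; the translates $\gamma K$ over a system of coset representatives cover at most... — the crucial finiteness input is that $\bar M$, being a finite CW-complex covered by $M$ with finitely generated $\pi_1$? No: $\bar M$ need \emph{not} be compact. The right statement: $\tilde C$ is $\pi$-cocompact in $\ucov M$, so $\underline p(\tilde C)=C$ is compact; and $K$ is compact; since $\underline p$ restricted to a neighborhood is a local homeomorphism and the $\Gamma$-action on $\ucov M$ is properly discontinuous, the set $\{\gamma\in\Gamma: \gamma K\cap \tilde{\tn}(\ucov N\times[-1,1])\neq\emptyset\}$ decomposes into finitely many $\pi$-orbits — this is exactly because a $\pi$-cocompact set meets only finitely many $\Gamma$-translates of a compact set modulo $\pi$, which follows from proper discontinuity of the $\Gamma$-action together with the $\pi$-cocompactness: choose a $\pi$-compact fundamental domain $D$ for $\tilde C$, then $\gamma K\cap\tilde C\neq\emptyset$ forces $\gamma K\cap\pi D\neq\emptyset$, i.e. $\gamma'K\cap D\neq\emptyset$ for some $\gamma'\in\gamma\pi$ (using $\pi\backslash$-invariance), and there are only finitely many $\gamma'\in\Gamma$ with $\gamma' K\cap \bar D\neq\emptyset$ for the compact closure $\bar D$ by properness of the action. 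That gives the required finiteness.

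\medskip

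The main obstacle I expect is the bookkeeping in the reduction of the first paragraph: making precise that the supremum vanishes as soon as $\gamma^{-1}K$ avoids the "collar" $\tilde C$ — one must verify that a translate of $K$ avoiding the collar lies in a single $\tilde\chi$-level set, which uses connectedness (one may harmlessly replace $K$ by a compact connected set containing it, e.g. a compact connected neighborhood, since enlarging $K$ only enlarges the exceptional set of cosets) together with the fact that $\ucov M\setminus\tilde C$ has its connected components partitioned into a "$+1$ part" and a "$-1$ part" by $\tilde\chi$. Once that geometric input is in hand, the finiteness itself is a routine consequence of proper discontinuity of the deck action and $\pi$-cocompactness of the collar; I would phrase that final step via a compact fundamental domain as sketched above rather than grinding through it in coordinates.
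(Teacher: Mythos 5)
Your proposal is correct and follows essentially the same route as the paper's proof: reduce to connected $K$, observe that $\tilde{\chi}$ is locally constant off the $\pi$-cocompact collar $\tilde{\tn}(\ucov{N}\times[-1,1])$ so the supremum vanishes whenever $\gamma^{-1}K$ misses the collar, and then use properness of the $\Gamma$-action to see that only finitely many cosets $\gamma\pi$ have $\gamma^{-1}K$ meeting it. The paper's version is just a tidier write-up of the same three steps (it also phrases the last step directly in terms of $\pi$-compactness plus properness rather than a fundamental domain, but that is the identical argument).
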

\begin{proof}
  Since \(\ucov{M}\) is a path-connected CW complex, every compact subset is contained in a path-connected compact subset.
  Hence we may assume without loss of generality that \(K\) is path-connected.
  By construction, the set \(\tilde{\tn}(\ucov{N} \times [-1,1])\) is \(\pi\)-compact.
  Thus, since the action of \(\Gamma\) on \(\ucov{M}\) is proper, there are only finitely many \(\gamma \pi \in \Gamma / \pi\) such that \(\gamma^{-1} K \cap \tilde{\tn}(\ucov{N} \times [-1,1])\) is non-empty.
  The set \(\ucov{M} \setminus \tilde{\tn}(\ucov{N} \times [-1,1])\) consists of two connected components on each of which the function \(\tilde{\chi}\) is constant.
  Using that \(K\) is connected, we conclude that for all but finitely many cosets \(\gamma \pi \in \Gamma / \pi\), the set \(\gamma^{-1} K\) is contained in one of the components. Hence the function \(\tilde{\chi}\) is constant on \(\gamma^{-1} K\) for all but finitely \(\gamma \pi \in \Gamma / \pi\).
  This proves the lemma.
\end{proof}

In the following, we work with equivariant KK-theory for Real $\Cstar$-algebras.
We use the following picture of $\KK_{-1}^\Gamma(A, B)$ for separable Real $\Gamma$-$\Cstar$-algebras $A$ and $B$ (compare~\cite{blackadar}*{20.2 and 17.5.2}):
elements are represented by triples $(E, \phi, T)$, where $E$ is a countably generated Real Hilbert $B$-module with an action of $\Gamma$, $\phi$ is an equivariant representation of $A$ on $E$ by adjointable operators, $T$ is a self-adjoint operator on $E$ such that $[\phi(a), T]$, $\phi(a)(T^2 - 1)$ and $\phi(a) (g \cdot F - F)$ are compact operators (in the Hilbert $B$-module sense) for all $a \in A$ and $g \in \Gamma$.

The collapse induces a map $\partial_{\ucov{M}}^\pi \colon \KO^{\lf, \pi}_\ast(\ucov{M}) \xrightarrow{\tn^!} \KO_\ast^{\lf, \pi}(\ucov{N} \times (-1,1)) \cong \KO^\pi_{\ast-1}(\ucov{N})$.
We view it as an element of $\KK^\pi_{-1}(\Cz(\ucov{N}), \Cz(\ucov{M}))$ which is explicitly given as follows:
Let $T_0 \coloneqq 1_{\ucov{N}} \otimes \chi_0$.
This represents a $\pi$-equivariant self-adjoint multiplier of $\Cz(\ucov{N} \times (-1,1))$ with $f \otimes 1\ (T_0^2 - 1) \in  \Cz(\ucov{N} \times (-1,1))$ for all $f \in \Cz(\ucov{N})$.
Thus $T_0$ represents a class $\partial^\pi_{\ucov{N} \times (-1,1)} \in \KK^\pi_{-1}(\Cz(\ucov{N}), \Cz(\ucov{N} \times (-1,1)))$.
Then the desired class is $\partial^\pi_{\ucov{M}} = \tilde{\tn}_! \partial^\pi_{\ucov{N} \times (-1,1)}$.

\begin{remark}\label{rem:PontryaginThomViaKK}
  We have the commutative diagram
  \[
    \begin{tikzcd}
      \KO_\ast(M) \rar["\cong"] \ar[dd,"\tn^!"] \ar[ddd, "\tau_{M,N}"', bend right=90]& \KO^\Gamma_{\ast}(\ucov{M}) \dar["\mathrm{r}_\pi^\Gamma"] \\
      & \KO_\ast^{\lf,\pi}(\ucov{M}) \dar["\tilde{\tn}^!"] \ar[dd, bend left=90,"\partial^\pi_{\ucov{M}}"]\\
      \KO_\ast^\lf(N \times (-1, 1)) \rar["\cong"] \dar["\cong","\partial_{N \times (-1,1)}"'] & \KO^{\lf,\pi}_{\ast}(\ucov{N} \times (-1,1)) \dar["\partial^\pi_{\ucov{N} \times (-1,1)}","\cong"'] \\
      \KO_{\ast-1}(N) \rar["\cong"] & \KO_{\ast-1}^\pi(\ucov{N}),
    \end{tikzcd}
  \]
  where \(\mathrm{r}_\pi^\Gamma\) is the forgetful map restricting equivariance to \(\pi\) and \(\partial_{N \times (-1,1)}\) is defined similarly as \(\partial^\pi_{\ucov{N} \times (-1,1)}\).
  The vertical composition on the left-hand side is by definition the map \(\tau_{M,N}\) which is induced by the Pontryagin--Thom collapse.
\end{remark}

Next we discuss induction (compare~\cite{KasparovConspectus}*{\S 5}).
Let $A$ be a $\pi$-$\Cstar$-algebra.
Then the induced $\Gamma$-$\Cstar$-algebra $\Ind_\pi^\Gamma A$ consists of all those functions $f \in \Cb(\Gamma, A)$ such that $f(\gamma h) = h^{-1} f(\gamma)$ for all $\gamma \in \Gamma$, $h \in \pi$ and the function $\Gamma/\pi \to \R$, $\gamma \pi \mapsto \|f(\gamma)\|$ vanishes at infinity.
For two $\pi$-$\Cstar$-algebras there is Kasparov's induction homomorphisms
\[
  \ii_\pi^\Gamma \colon \KK^\pi(A, B) \to \KK^\Gamma(\Ind_{\pi}^\Gamma A, \Ind_{\pi}^\Gamma B).
\]
Note that if $A = \Cz(Z)$ for some proper locally compact $\pi$-space $Z$, then $\Ind_\pi^\Gamma A = \Cz(\Gamma \times_\pi Z)$.
If $Z$ is a $\pi$-invariant subspace of a proper locally compact $\Gamma$-space $W$, then there is a proper $\Gamma$-equivariant map $\Gamma \times_\pi Z \to W$ which takes $[\gamma, z]$ to $\gamma \cdot z \in W$.
Composing this map with induction yields a map
\begin{align}
  \Ind_\pi^\Gamma \colon \KK^\pi(\Cz(Z), B) &\xrightarrow{\ii_\pi^\Gamma} \KK^\Gamma(\Cz(\Gamma \times_\pi Z), \Ind_\pi^\Gamma B) \nonumber \\
  &\rightarrow \KK^\Gamma(\Cz(W), \Ind_\pi^\Gamma B) \label{eq:inductionCompose}
\end{align}
Passing to the colimit over all proper $\pi$-invariant $\pi$-compact subsets
$Z$ of a universal space for proper actions $\Eub \Gamma$ (which is also an
$\Eub\pi$ with the restricted action) yields the induction map
\begin{equation}
  \Ind_\pi^\Gamma \colon \KO_\ast^\pi(\Eub \Gamma; B) \to \KO_\ast^\Gamma(\Eub \Gamma; \Ind_\pi^\Gamma B)\label{eq:inductionIso}
\end{equation}

\begin{theorem}[\cites{chabertEchterhoff,oyonooyono}]\label{thm:KKinductionIso}
  The induction map \labelcref{eq:inductionIso} is an isomorphism.
\end{theorem}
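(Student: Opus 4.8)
The plan is to deduce the isomorphism statement from the general structural result that the induction functor $\Ind_\pi^\Gamma$ is compatible with the assembly maps for $\pi$ and $\Gamma$, which is the content of the work of Chabert--Echterhoff and Oyono-Oyono cited in the statement. Concretely, I would first recall that the universal space $\Eub\Gamma$, viewed with the restricted $\pi$-action, is a model for $\Eub\pi$; this uses that $\pi \leq \Gamma$ so that every finite subgroup of $\pi$ is a finite subgroup of $\Gamma$ and the fixed point sets $(\Eub\Gamma)^F$ are contractible for all finite $F \leq \pi$. Thus the left-hand side $\KO_\ast^\pi(\Eub\Gamma;B)$ is genuinely the $\pi$-equivariant homology of a classifying space for proper actions, so both sides of \labelcref{eq:inductionIso} are of the form one feeds into the Baum--Connes picture.

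The core step is then to observe that the induction map on KK-theory, Kasparov's $\ii_\pi^\Gamma$, when assembled over the directed system of $\pi$-compact subsets $Z \subseteq \Eub\Gamma$ as in \labelcref{eq:inductionCompose} and \labelcref{eq:inductionIso}, can be identified with the \emph{change-of-groups isomorphism} in the topological side of the Baum--Connes assembly. This is exactly the statement proved in \cite{chabertEchterhoff}: for a closed subgroup $\pi$ of a locally compact group $\Gamma$ (here both discrete), the induction construction gives a natural isomorphism $\KO_\ast^\pi(\Eub\pi;B) \xrightarrow{\cong} \KO_\ast^\Gamma(\Eub\Gamma;\Ind_\pi^\Gamma B)$ — this is their ``induction is an isomorphism on the topological side'' result, and it is the analogue on the left-hand side of the assembly of the (generally harder) statement that $\mathrm{K}_\ast(\Cstar\pi \ltimes B) \cong \mathrm{K}_\ast(\Cstar\Gamma \ltimes \Ind_\pi^\Gamma B)$, i.e.\ $\Cstar$-algebraic induction is a $\mathrm{K}$-theory isomorphism. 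For the purely topological statement one does not even need to invoke Baum--Connes for either group; it follows from a Mayer--Vietoris / colimit argument reducing to the case $Z = \Gamma/\pi \times Z'$ for $Z'$ a proper $\pi$-space, where the claim becomes the elementary identity $\KK^\Gamma(\Cz(\Gamma\times_\pi Z'), \Ind_\pi^\Gamma B) \cong \KK^\pi(\Cz(Z'), B)$, which is the adjunction between induction and restriction (Green's imprimitivity / Kasparov's induction-restriction adjunction). One then checks this identification is compatible with the structure maps of the directed systems over $Z' \subseteq \Eub\Gamma$ (as $\pi$-compact $\pi$-subsets) and $Z \subseteq \Eub\Gamma$ (as $\Gamma$-compact $\Gamma$-subsets), using that $\Gamma \times_\pi (-)$ sends a cofinal system of the former to a cofinal system of the latter.

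So the proof I would write is essentially a citation-and-bookkeeping argument: (i) identify $\Eub\Gamma|_\pi$ with $\Eub\pi$; (ii) recall the induction-restriction adjunction in equivariant KK-theory which gives the isomorphism $\KK^\Gamma(\Cz(\Gamma\times_\pi Z'),\Ind_\pi^\Gamma B)\cong\KK^\pi(\Cz(Z'),B)$ for proper $\pi$-spaces $Z'$; (iii) check cofinality of $\{\Gamma\times_\pi Z'\}$ among $\Gamma$-compact $\Gamma$-subsets of $\Eub\Gamma$, so that passing to the colimit turns the adjunction isomorphism into \labelcref{eq:inductionIso}; and (iv) note that the composite map constructed in \labelcref{eq:inductionCompose} agrees with the adjunction isomorphism under the identification $\Gamma \times_\pi Z' \to \Eub\Gamma$, which is an isomorphism of $\Gamma$-spaces onto its image. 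The real content is not ours — it is \cite{chabertEchterhoff}, with the tree-group special case in \cite{oyonooyono} — so the main obstacle is purely expository: pinning down precisely which cofinality statement about $\pi$-compact versus $\Gamma$-compact subsets of $\Eub\Gamma$ is needed, and verifying that the map $[\gamma,z]\mapsto\gamma\cdot z$ restricts to a $\Gamma$-homeomorphism $\Gamma\times_\pi Z' \to \Gamma\cdot Z'$ when $Z'$ is a $\pi$-compact $\pi$-subset, which holds because the $\Gamma$-action on $\Eub\Gamma$ is proper so that $\Gamma\cdot Z'/\Gamma$ is compact and the natural map is a bijective proper continuous map between locally compact Hausdorff spaces, hence a homeomorphism.
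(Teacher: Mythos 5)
Your overall strategy coincides with the paper's: \cref{thm:KKinductionIso} is proved there simply by citing Oyono--Oyono~\cite{oyonooyono} (with Chabert--Echterhoff~\cite{chabertEchterhoff} for the locally compact generalization), so a citation-and-bookkeeping argument is indeed what is intended. Two points, however, need attention.

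First, a genuine omission: the statement is about \emph{real} K-homology with Real $\Cstar$-algebra coefficients, whereas the results of \cite{oyonooyono} and \cite{chabertEchterhoff} are formulated for complex K-homology. The paper explicitly bridges this gap by invoking the comparison technique of \cite{schickRealVsComplex} to deduce the real case from the complex one; your proposal never addresses the real/complex issue, so as written the citation does not actually cover the theorem being proved.

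Second, step (iv) of your bookkeeping is wrong. The map $\Gamma\times_\pi Z' \to \Gamma\cdot Z'$, $[\gamma,z]\mapsto \gamma\cdot z$, is \emph{not} injective in general: it identifies $[\gamma_1,z_1]$ with $[\gamma_2,z_2]$ only when $\gamma_1^{-1}\gamma_2\in\pi$, yet two points of $Z'$ may lie in the same $\Gamma$-orbit without lying in the same $\pi$-orbit (already for $\Gamma=\Z$, $\pi=1$, $\Eub\Gamma=\R$ and $Z'=[0,2]$ the map $\Z\times[0,2]\to\R$ is two-to-one on a large set). Properness of the $\Gamma$-action does not repair this. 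Consequently the composite \labelcref{eq:inductionCompose} is not induced by a termwise homeomorphism, and the isomorphism of \cref{thm:KKinductionIso} holds only after passing to the colimit over $\pi$-compact subsets; establishing that colimit statement (e.g.\ by the Mayer--Vietoris/cellular induction you allude to in passing) is exactly the nontrivial content of the cited references, not a formality that follows from an induction--restriction adjunction applied to each $Z'$ separately.
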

\begin{proof}
  For \emph{complex} K-homology, this result is due to Oyono--Oyono~\cite{oyonooyono}.
  See also Chabert--Echterhoff~\cite{chabertEchterhoff} for a generalization to the case of locally compact groups.
  The real case can be obtained from the complex case via the technique of~\cite{schickRealVsComplex}.
\end{proof}

After this interlude on induction we return to our setup:

\begin{definition}\label{defi:GroupBoundaryClass}
  Assume \cref{setup:KKTransfer}.
  We let $\partial_{\Gamma/\pi} \in \KK^{\Gamma}(\R, \Cz(\Gamma/\pi))$ be the class defined as follows.
  Let $i \colon \Gamma / \pi \hookrightarrow \bar{M}$, $i(\gamma \pi) := \underline{p}(\gamma^{-1} \tilde{x}_0) \in \bar{M} = \pi \backslash \ucov{M}$.
  Set $\chi_{\Gamma / \pi} \colon \Gamma / \pi \to [-1,1]$, $\chi_{\Gamma / \pi}(z) = \chi(i(z))$.
  Then $\chi_{\Gamma / \pi}$ is a self-adjoint multiplier of $\Cz(\Gamma / \pi)$ which satisfies $\chi_{\Gamma/\pi}^2 = 1$ modulo \(\Cz(\Gamma/\pi)\).
  Furthermore, $\chi_{\Gamma / \pi}$ is $\Gamma$-invariant modulo $\Cz(\Gamma/\pi)$:
  For each fixed $g \in \Gamma$, \cref{lem:finitelyManyClasse} with $K \coloneqq \{\tilde{x}_0, g \cdot \tilde{x}_0 \}$ implies that the  function $z \mapsto \chi_{\Gamma / \pi}(z) - \chi_{\Gamma / \pi}(g^{-1} \cdot z)$ has finite support.
  Thus $\chi_{\Gamma / \pi}$ represents a class $\partial_{\Gamma / \pi} \in \KK_{-1}^\Gamma(\R, \Cz(\Gamma/\pi))$.
\end{definition}

If $W$ is a proper $\Gamma$-space, then we in fact have a $\Gamma$-equivariant homeomorphism
\[\Gamma/\pi \times W \cong \Gamma \times_{\pi} W, \quad (\gamma \pi, x) \mapsto [\gamma, \gamma^{-1} x].\]
In particular, in \cref{setup:KKTransfer}, there is a canonical isomorphism of $\Gamma$-\(\Cstar\)-algebras
\begin{equation}
  \Ind_\pi^\Gamma(\Cz(\ucov{M})) = \Cz(\Gamma \times_\pi \ucov{M}) \cong \Cz(\Gamma / \pi \times \ucov{M}) = \Cz(\Gamma/\pi) \otimes \Cz(\ucov{M}).\label{eq:Xinduction}
  \end{equation}

\begin{proposition}\label{prop:boundaryElement}
Assume \cref{setup:KKTransfer}.
  Let $\partial_{\Gamma / \pi} \in \KK_{-1}^\Gamma(\R, \Cz(\Gamma / \pi))$ be
  as in \cref{defi:GroupBoundaryClass}.
  Then
  \[
    \Ind_\pi^\Gamma(\partial^\pi_{\ucov{M}}) = \partial_{\Gamma / \pi} \otimes_\R 1_{\Cz(\ucov{M})} \in \KK_{-1}^\Gamma(\Cz(\ucov{M}), \Cz(\Gamma / \pi) \otimes \Cz(\ucov{M})).
  \]
  This identity implicitly uses the canonical isomorphism~\labelcref{eq:Xinduction} applied to the right-hand side argument of \(\KK\).
\end{proposition}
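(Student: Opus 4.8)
The plan is to unwind both sides down to the level of the defining cycles in equivariant $\KK$-theory and exhibit an explicit isomorphism of Kasparov triples. Recall that $\partial^\pi_{\ucov{M}} = \tilde{\tn}_! \partial^\pi_{\ucov{N}\times(-1,1)}$, where $\partial^\pi_{\ucov{N}\times(-1,1)}$ is represented by the triple with Hilbert module $\Cz(\ucov{N}\times(-1,1))$, the obvious representation of $\Cz(\ucov{N})$ by multiplication, and the self-adjoint multiplier $T_0 = 1_{\ucov{N}}\otimes\chi_0$. Applying the induction homomorphism $\ii_\pi^\Gamma$ and then the map $\eqref{eq:inductionCompose}$ induced by the proper $\Gamma$-map $\Gamma\times_\pi\ucov{M}\to\ucov{M}$, $[\gamma,\tilde x]\mapsto\gamma\tilde x$, one obtains a triple over $\Cz(\Gamma\times_\pi\ucov M) \cong \Cz(\Gamma/\pi)\otimes\Cz(\ucov M)$ via $\eqref{eq:Xinduction}$. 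I would compute this induced triple explicitly: the Hilbert module becomes $\Cz(\Gamma\times_\pi(\ucov N\times(-1,1)))$, and the operator becomes the $\Gamma$-translate family of $\chi_0$, which under the homeomorphism $\Gamma/\pi\times W\cong\Gamma\times_\pi W$ of the excerpt is precisely the function $(\gamma\pi,\tilde x)\mapsto \chi(i(\gamma\pi))\cdot(\text{something depending only on the }\ucov M\text{ factor})$. On the other hand, $\partial_{\Gamma/\pi}\otimes_\R 1_{\Cz(\ucov M)}$ is, by the very definition of the Kasparov product with a unit, the triple with Hilbert module $\Cz(\Gamma/\pi)\otimes\Cz(\ucov M)$, the trivial representation of $\Cz(\ucov M)$ on the second factor, and the operator $\chi_{\Gamma/\pi}\otimes 1$. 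So the content of the proposition is that these two triples agree in $\KK$.

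The key steps, in order: (1) spell out $\ii_\pi^\Gamma(\partial^\pi_{\ucov M})$ as a triple over $\Cz(\Gamma\times_\pi\ucov M)$, using that induction of $\Cz(Z)$ is $\Cz(\Gamma\times_\pi Z)$ and that induction is compatible with exterior products and with the external tensor structure; in particular track where $T_0$ goes. (2) Push forward along $\Gamma\times_\pi\ucov M\to\ucov M$ (which on the $\ucov N\times(-1,1)$ side is the inclusion $\tilde{\tn}$ followed by the $\Gamma$-action), to land in $\Cz(\ucov M)$ as the coefficient algebra's source; note this step is essentially "change of coefficients along a proper map" and does not change the operator, only reinterprets the module. (3) Apply the homeomorphism $\Gamma/\pi\times\ucov M\cong\Gamma\times_\pi\ucov M$, $(\gamma\pi,\tilde x)\mapsto[\gamma,\gamma^{-1}\tilde x]$, and observe that under it the induced operator becomes $\chi_{\Gamma/\pi}\otimes 1$ up to a compact perturbation — here is where one uses that $\chi$ restricted near the dividing hypersurface is $\chi_0$ and that, away from a $\pi$-compact set, $\tilde\chi$ is locally constant, so the difference between the "honest" collapse operator and $\chi_{\Gamma/\pi}\otimes 1$ has $\Cz$-entries, i.e. is a compact multiplier, hence represents the same $\KK$-class. (4) Identify the result with the triple defining $\partial_{\Gamma/\pi}\otimes_\R 1$, invoking that $\otimes_\R 1_{\Cz(\ucov M)}$ on the standard picture of $\KK_{-1}^\Gamma(\R,\Cz(\Gamma/\pi))$ is exactly "tensor the module with $\Cz(\ucov M)$, tensor the operator with $1$".

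The main obstacle I expect is step (3): matching the geometrically-defined collapse operator coming from $\tilde\tn_!$ with the combinatorial function $\chi_{\Gamma/\pi}$ on the homogeneous space. One must check that the relevant operators differ by a compact multiplier of $\Cz(\Gamma/\pi)\otimes\Cz(\ucov M)$, and this is precisely where \cref{lem:finitelyManyClasse} enters: it guarantees that on any compact piece of $\ucov M$ only finitely many cosets $\gamma\pi$ see a non-constant $\tilde\chi$, so the difference is supported on finitely many $\Gamma/\pi$-fibers over each compact subset of $\ucov M$, hence lies in $\Cz$. A secondary nuisance is bookkeeping the grading/sign conventions and the precise form of the Kasparov product with a unit class in the odd ($\KK_{-1}$) picture, but this is routine once the underlying triples are written down. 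Everything else — functoriality of $\ii_\pi^\Gamma$ under exterior products, the identification of induced algebras with $\Cz$ of induced spaces, and the $\Gamma$-homeomorphism $\Gamma/\pi\times W\cong\Gamma\times_\pi W$ — is standard and already recalled in the excerpt.
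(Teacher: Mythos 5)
Your proposal is correct and follows essentially the same route as the paper: write out $\Ind_\pi^\Gamma(\partial^\pi_{\ucov{M}})$ as an explicit Kasparov triple, extend the module/operator from $\Gamma\times_\pi(\ucov{N}\times(-1,1))$ to $\Gamma\times_\pi\ucov{M}$ using that $\tilde\chi$ is invertible outside the tube, transport through the homeomorphism $\Gamma/\pi\times\ucov{M}\cong\Gamma\times_\pi\ucov{M}$, and invoke \cref{lem:finitelyManyClasse} to see that the resulting operator $\omega(\gamma\pi,x)=\tilde\chi(\gamma^{-1}x)$ differs from $\chi_{\Gamma/\pi}\otimes 1$ by a locally compact perturbation. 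The only imprecision is your claim that the transported operator is "precisely" a product of $\chi(i(\gamma\pi))$ with a function of the $\ucov M$-factor — it is not of product form, but your step (3) correctly reduces the comparison to the compact-perturbation argument, which is the actual content.
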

\begin{proof}
Recall that $\partial_{\ucov{M}}^\pi$ is represented by the operator $T_0 \coloneqq 1_{\ucov{N}} \otimes \chi_0$ acting as a multiplier of $\Cz(\ucov{N} \times (-1,1))$.
Then $\Ind_{\pi}^\Gamma \partial_{\ucov{M}}^\pi \in \KK_{-1}^\Gamma(\Cz(\ucov{M}), \Ind_\pi^\Gamma \Cz(\ucov{M}))$ is represented by the triple
\[ \Xi \coloneqq (\Cz(\Gamma \times_\pi \ucov{N} \times(-1,1)), \phi, 1_{\Gamma \times_\pi \ucov{N}} \otimes \chi_0), \]
 where we view $\Cz(\Gamma \times_\pi \ucov{N} \times(-1,1))$ as a Hilbert $\Cz(\Gamma \times_\pi \ucov{M})$-module via the inclusion $\Gamma \times_\pi \tn_! \colon \Cz(\Gamma \times_\pi \ucov{N} \times (-1,1)) \hookrightarrow \Cz(\Gamma \times_\pi \ucov{M})$ and $\phi$ is the multiplication representation given as the composition
 \[ \phi \colon \Cz(\ucov{M}) \to \Cz(\Gamma \times_\pi \ucov{N}) \to \Cb(\Gamma \times_\pi \ucov{N} \times (-1,1)),\]
 where the first map is induced by $\Gamma \times_\pi \ucov{N} \to \ucov{M}$, $(\gamma, y) \mapsto \gamma \cdot \tn(\tilde{y}, 0)$.

 We first observe that we may change the representation $\phi$ by a straightforward homotopy to
 \[
 \phi^\prime \colon \Cz(\ucov{M}) \xrightarrow{\psi} \Cb(\Gamma \times_\pi \ucov{M}) \xrightarrow{(\Gamma \times_\pi \tn)^\ast \otimes 1} \Cb(\Gamma \times_\pi \ucov{N} \times (-1,1)),
 \]
  where the first map is induced by $\Gamma \times_\pi \ucov{M} \to \ucov{M}$, $(\gamma, x) \mapsto \gamma \cdot x$. 
  Then the triple
 \[ \Xi^\prime \coloneqq (\Cz(\Gamma \times_\pi \ucov{N} \times(-1,1)), \phi^\prime, 1_{\Gamma \times_\pi \ucov{N}} \otimes \chi_0), \]
 represents the same class as $\Xi$.
 Next the triple $\Xi^\prime$ may be extended to
 \[
 \Xi^{\prime \prime} \coloneqq (\Cz(\Gamma \times_\pi \ucov{M}), \psi, 1_\Gamma \otimes \tilde{\chi}),
 \]
 where  $(1_\Gamma \otimes \tilde{\chi})(\gamma, x) = \tilde{\chi}(x)$.
 This still represents the same KK-class because $\tilde{\chi}$ agrees with  $1 \otimes \chi_0$ on $\ucov{N} \times (-1,1)$ and is invertible outside.
 Finally, applying the homeomorphism $\Gamma \times_\pi \ucov{M} \cong \Gamma / \pi \times \ucov{M}$, we see that $\Xi^{\prime \prime}$ is isomorphic to
 \[ \Xi^{\prime \prime \prime} \coloneqq (\Cz(\Gamma / \pi) \otimes \Cz( \ucov{M}), 1 \otimes \mu, \omega), \]
 where $\omega \colon \Gamma / \pi \times \ucov{M} \to [-1,1]$, $\omega(\gamma \pi, x) = \tilde{\chi}(\gamma^{-1} x)$ and $\mu$ is simply the multiplication representation of $\Cz(\ucov{M})$ on itself.
 To be more precise, the isomorphism between \(\Xi^{\prime \prime}\) and \(\Xi^{\prime \prime \prime}\) is an isomorphism of Kasparov modules over the isomorphism of \(\Gamma\)-\(\Cstar\)-algebras \labelcref{eq:Xinduction} in the right-hand side argument.

 The element $\partial_{\Gamma / \pi} \otimes_\R 1_{\Cz(\ucov{M})}$ is represented by the triple
 \[\Upsilon \coloneqq (\Cz(\Gamma / \pi) \otimes \Cz( \ucov{M}), 1 \otimes \mu, \chi_{\Gamma / \pi}\otimes 1). \]
 We claim that $\Xi^{\prime \prime \prime} $ and $\Upsilon$ represent the same KK-class.
 Indeed, for each compactly supported element $f \in \Cz(\ucov{M})$,  \cref{lem:finitelyManyClasse} implies that
 \[ (\gamma \pi, x) \mapsto f(x) ( \tilde{\chi}  (\gamma^{-1} \tilde{x}_0) - \tilde{\chi}(\gamma^{-1} x)) \]
 is compactly supported on $\Gamma / \pi \times \ucov{M}$.
 Thus $1 \otimes \mu(f) (1 \otimes \chi_{\Gamma / \pi} - \omega) \in \Cz(\Gamma / \pi) \otimes \Cz(\ucov{M})$ for each $f \in \Cz(\ucov{M})$.
 This proves the claim and hence finishes the proof of the proposition.
\end{proof}

\begin{lemma}\label{lem:descentCommutative}
  Let $\pi \leq \Gamma$ be a subgroup of a discrete group $\Gamma$.
  Let $\xi \in \KK^\Gamma_{-k}(\R, \Cz(\Gamma / \pi))$.
  Then we have a commutative diagram,
  \begin{equation*}
    \begin{tikzcd}
      \KO^\Gamma_\ast(\Eub \Gamma) \rar{\mu^\Gamma} \dar{\xi}
      & \KO_\ast(\Cstar \Gamma) \dar{j_\Gamma(\xi)} \\
      \KO^\Gamma_{\ast - k}(\Eub \Gamma, \Cz(\Gamma / \pi)) \rar{\mu^\Gamma_{\Cz(\Gamma / \pi)}}
      & \KO_{\ast-k}(\Cz(\Gamma / \pi) \rtimes \Gamma) \\
      \KO^\pi_{\ast - k}(\Eub \Gamma) \rar{\mu^\pi} \uar{\Ind_{\pi}^\Gamma} &  \KO_{\ast -k }(\Cstar \pi) \uar{\cong}
    \end{tikzcd},
  \end{equation*}
  where $j_\Gamma(\xi) \in \KK_{-k}(\Cstar \Gamma, \Cz(\Gamma / \pi) \rtimes
  \Gamma)$ is the descent homomorphism applied to $\xi$. This holds for either
  reduced group $\Cstar$-algebras and crossed products as well as for maximal
  ones. 
\end{lemma}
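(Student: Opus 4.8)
The plan is to build the diagram out of two kinds of naturality statements for the Baum--Connes assembly map: first, naturality in the coefficient algebra with respect to the Kasparov product by a $\KK^\Gamma$-class, and second, compatibility of the assembly map with induction from $\pi$ to $\Gamma$. Concretely, the top square is the statement that for any separable $\Gamma$-$\Cstar$-algebra $B$ and any class $\xi \in \KK^\Gamma_{-k}(B, B')$, the square with vertical maps $\xi$ (acting on the left via the cap/Kasparov product on $\KO^\Gamma_*(\Eub\Gamma; B) = \colim_Z \KK^\Gamma(\Cz(Z), B)$ and on the right via the descent $j_\Gamma(\xi)$) and horizontal maps the assembly maps $\mu^\Gamma_B$ and $\mu^\Gamma_{B'}$ commutes. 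Here we specialize $B = \R$ and $B' = \Cz(\Gamma/\pi)$. This is a standard property of the assembly map phrased in the Kasparov-categorical / $\gamma$-element formalism, and I would cite it (e.g.\ from the functoriality of the Baum--Connes assembly map in the coefficients, as in Chabert--Echterhoff or the general Kasparov machinery), using that the assembly map is defined as a colimit over $\pi$-compact subsets $Z \subseteq \Eub\Gamma$ of maps $\KK^\Gamma(\Cz(Z),B) \to \RK_*(B\rtimes\Gamma)$ which are themselves built from descent and a fixed projection in $\Cz(Z)\rtimes\Gamma$; the Kasparov product with $\xi$ commutes with descent and with this fixed projection, which is precisely the commuting square.

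The bottom square is the compatibility of the assembly maps $\mu^\pi$ and $\mu^\Gamma_{\Cz(\Gamma/\pi)}$ with the induction homomorphism \labelcref{eq:inductionIso} on the left and the canonical Morita-type isomorphism $\Cstar\pi \xrightarrow{\cong} \Cz(\Gamma/\pi)\rtimes\Gamma$ (the Green--Julg / imprimitivity isomorphism) on the right. Again this is a known statement: the induction map in $\KK$-theory intertwines the assembly maps, because $\Eub\Gamma$ with the restricted $\pi$-action is a model for $\Eub\pi$, and the colimit defining $\mu^\pi$ over $\pi$-compact subsets of $\Eub\pi$ matches, under $\Ind_\pi^\Gamma$, the colimit defining $\mu^\Gamma_{\Cz(\Gamma/\pi)}$ over $\pi$-compact subsets $Z$ (sending $Z \mapsto \Gamma\times_\pi Z \cong \Gamma/\pi \times Z$). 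I would appeal to \cref{thm:KKinductionIso} for the fact that the left vertical composite is an isomorphism, and to the standard identification of the induced crossed product $\Cz(\Gamma/\pi)\rtimes\Gamma \cong \Cz(\pt)\rtimes\pi = \Cstar\pi$ for the right one; the commutativity is then the assertion that Kasparov's induction homomorphism $\ii_\pi^\Gamma$ is compatible with descent, which is part of Kasparov's conspectus (\cite{KasparovConspectus}*{\S5}).

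The step I expect to be the main obstacle is assembling these two general naturality facts \emph{in the precise normalization used in this paper} — i.e.\ checking that the maps labelled $\mu^\Gamma$, $\mu^\pi$, $\Ind_\pi^\Gamma$, and the vertical $\xi$-action are literally the composites appearing in the diagram, rather than differing by some automorphism of $\Cz(\Gamma/\pi)\rtimes\Gamma$ or by a shift in the grading. In particular one must be careful that the class $\xi$ lives in $\KK^\Gamma_{-k}(\R,\Cz(\Gamma/\pi))$ (not $\KK^\Gamma_{-k}(\Cz(\Gamma/\pi),\R)$), so that the cap product direction and the descent direction are consistent, and that the Morita equivalence $\Cstar\pi \simeq \Cz(\Gamma/\pi)\rtimes\Gamma$ is the one compatible with $j_\Gamma$ and with $\ii_\pi^\Gamma$ simultaneously. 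Once the bookkeeping of these identifications is fixed, both squares commute by the cited functoriality properties, and the statement about reduced versus maximal completions follows because descent, induction, the Green--Julg isomorphism, and the assembly map all exist and are compatible in both the reduced and the maximal setting — there is nothing in the argument that distinguishes the two completions. I would therefore organize the proof as: (1) recall the colimit description of $\mu^\Gamma_B$ and its functoriality in $B$ under Kasparov products and descent; (2) deduce the top square; (3) recall Kasparov induction and its compatibility with assembly and with the Green--Julg isomorphism; (4) deduce the bottom square; (5) remark that every ingredient works verbatim for $\CstarMax$ and $\CstarRed$.
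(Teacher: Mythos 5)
Your proposal follows essentially the same route as the paper: the top square is verified exactly as you describe, by writing $\mu^\Gamma(x)=[p_{X,\Gamma}]\otimes_{\Cz(X)\rtimes\Gamma}j_\Gamma(x)$ for a cutoff projection and using that Kasparov's descent is compatible with the composition product, so that $j_\Gamma(x\otimes_\R\xi)=j_\Gamma(x)\otimes_{\Cstar\Gamma}j_\Gamma(\xi)$; the bottom square is, as you suggest, cited as a standard compatibility of assembly with induction (Chabert--Echterhoff, Proposition~2.3). The normalization worries you raise are legitimate but resolve exactly as you anticipate, and the reduced/maximal remark is handled the same way.
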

\begin{proof}
  The fact that the bottom square in the diagram exists and commutes is a standard fact on the induction isomorphism, see for instance~\cite{chabertEchterhoff}*{Proposition~2.3}.

  To see that the top square commutes, recall the definition of the Baum--Connes assembly map:
  Let $X \subseteq \Eub \Gamma$ be a $\Gamma$-compact subset and $x \in \KK_\ast^\Gamma(\Cz(X), \R)$.
  Then $j_\Gamma(x) \in \KK_\ast(\Cz(X) \rtimes \Gamma, \Cstar \Gamma)$ and $\mu^\Gamma(x) = [p_{X, \Gamma}] \otimes_{\Cz(X) \rtimes \Gamma} j_\Gamma(x)$, where $[p_{X, \Gamma}] \in \KK(\R, \Cz(X) \rtimes \Gamma)$ is the $\K$-theory class determined by a cutoff function for $(X, \Gamma)$.
  Similarly, $\mu^\Gamma_{\Cz(\Gamma/\pi)}(x \otimes_\R \xi) = [p_{X, \Gamma}] \otimes_{\Cz(X) \rtimes \Gamma} j_\Gamma(x \otimes_\R \xi)$.
  Moreover, since Kasparov's descent homomorphism is compatible with the composition product (see~\cite{KasparovConspectus}*{\S 6, Theorem 1}), we have $j_\Gamma(x \otimes_\R \xi) = j_\Gamma(x) \otimes_{\Cstar \Gamma} j_\Gamma(\xi)$.
  Thus
  \begin{align*}
    \mu^\Gamma_{\Cz(\Gamma/\pi)}(x \otimes_\R \xi) &=
      [p_{X, \Gamma}] \otimes_{\Cz(X) \rtimes \Gamma} j_\Gamma(x \otimes_\R \xi) \\
    &= [p_{X, \Gamma}] \otimes_{\Cz(X) \rtimes \Gamma}j_\Gamma(x) \otimes_{\Cstar \Gamma} j_\Gamma(\xi) \\
    &= \mu^\Gamma(x) \otimes_{\Cstar \Gamma} j_\Gamma(\xi). \qedhere
  \end{align*}
\end{proof}


\begin{lemma} \label{lem:inductionDiagram}
Let $\partial_{\Gamma / \pi}$ be as in \cref{prop:boundaryElement}.
Then the following diagram commutes:
\begin{equation*}
  \resizebox{\linewidth}{!}{
\begin{tikzcd}[ampersand replacement=\&]
\KO_\ast(M) \rar{\cong} \ar[dd,"\tau_{M,N}"] \& \KO_\ast^\Gamma(\ucov{M}) \rar[equal] \dar["\mathrm{r}^\Gamma_\pi"] \& \KK_\ast^\Gamma(\Cz(\ucov{M}), \R) \rar \dar["\blank \otimes_\R \partial_{\Gamma/\pi}"] \& \RKO_\ast^\Gamma(\Eub \Gamma) \dar["\blank \otimes_\R \partial_{\Gamma/\pi}"]  \\
 \& \KO_\ast^{\lf, \pi}(\ucov{M})  \dar{\partial^\pi_{\ucov{M}}}  \ar[r, phantom, "\circledast"] \& \KK^\Gamma_{\ast-1}(\Cz(\ucov{M}), \Cz(\Gamma / \pi)) \rar \& \RKO^\Gamma_{\ast-1}(\Eub \Gamma;  \Cz(\Gamma / \pi)) \\
\KO_{\ast-1}(N) \rar["\cong"] \& \KO_{\ast-1}^\pi(\ucov{N}) \rar[equal] \& \KK_{\ast-1}^\pi(\Cz(\ucov{N}), \R) \rar \uar["\Ind_\pi^\Gamma"] \& \RKO_{\ast-1}^{\pi}(\Eub \Gamma) \uar["\Ind_\pi^\Gamma"]
\end{tikzcd}}
\end{equation*}

\end{lemma}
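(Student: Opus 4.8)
The plan is to verify the diagram square by square, reading from left to right: it splits into the left rectangle spanned by the first two columns, the middle square~$\circledast$, and the right-hand part spanned by the last two columns, which itself consists of an upper and a lower square. Throughout we work in \cref{setup:KKTransfer} and use the identifications $\KO_\ast^\Gamma(\ucov{M}) = \KK_\ast^\Gamma(\Cz(\ucov{M}),\R)$, $\KO_\ast^{\lf,\pi}(\ucov{M}) = \KK_\ast^\pi(\Cz(\ucov{M}),\R)$ and $\KO_{\ast-1}^\pi(\ucov{N}) = \KK_{\ast-1}^\pi(\Cz(\ucov{N}),\R)$, so that the horizontal "equals" in the first and third rows are literal equalities.

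The left rectangle is exactly the content of \cref{rem:PontryaginThomViaKK}: under the vertical identifications $\KO_\ast(M) \cong \KO_\ast^\Gamma(\ucov{M})$ and $\KO_{\ast-1}(N) \cong \KO_{\ast-1}^\pi(\ucov{N})$ coming from the free $\Gamma$- resp.\ $\pi$-actions, the Pontryagin--Thom transfer $\tau_{M,N}$ is computed by $\partial^\pi_{\ucov{M}} \circ \mathrm{r}^\Gamma_\pi$. So nothing further is needed there.

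The square~$\circledast$ is the heart of the argument, and the step I expect to be the main obstacle. Unwinding the definitions, it asserts that for every $x \in \KK_\ast^\Gamma(\Cz(\ucov{M}),\R)$ one has $\Ind_\pi^\Gamma(\partial^\pi_{\ucov{M}} \otimes_{\Cz(\ucov{M})} \mathrm{r}^\Gamma_\pi(x)) = x \otimes_\R \partial_{\Gamma/\pi}$, where $\Ind_\pi^\Gamma$ is the map \eqref{eq:inductionCompose} for $Z = \ucov{N} \subset W = \ucov{M}$ and $B = \R$; concretely $\Ind_\pi^\Gamma(\blank) = [q] \otimes_{\Cz(\Gamma \times_\pi \ucov{N})} \ii_\pi^\Gamma(\blank)$, where $[q] \in \KK_0^\Gamma(\Cz(\ucov{M}), \Cz(\Gamma \times_\pi \ucov{N}))$ is the class of the restriction homomorphism onto the closed $\Gamma$-invariant subset $\Gamma \times_\pi \ucov{N} \subset \ucov{M}$. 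Since Kasparov induction $\ii_\pi^\Gamma$ is multiplicative for the composition product and $[q]$ enters only in the leftmost variable, the left-hand side equals $\Ind_\pi^\Gamma(\partial^\pi_{\ucov{M}}) \otimes_{\Cz(\Gamma \times_\pi \ucov{M})} \ii_\pi^\Gamma(\mathrm{r}^\Gamma_\pi(x))$. By \cref{prop:boundaryElement} the first factor equals $\partial_{\Gamma/\pi} \otimes_\R 1_{\Cz(\ucov{M})}$ under the identification \eqref{eq:Xinduction}, and a direct inspection of Kasparov modules shows that, under the same identification, $\ii_\pi^\Gamma(\mathrm{r}^\Gamma_\pi(x)) = 1_{\Cz(\Gamma/\pi)} \otimes_\R x$: restricting an equivariant module $(E,\phi,T)$ to $\pi$ and then inducing it up produces $(\Cz(\Gamma/\pi) \otimes E, \mathrm{id} \otimes \phi, 1 \otimes T)$ with the diagonal $\Gamma$-action, which represents $1_{\Cz(\Gamma/\pi)} \otimes_\R x$. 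Substituting and applying the interchange law between exterior and composition products, together with the fact that the exterior product of a class in $\KK(\blank,\R)$ with a class in $\KK(\R,\blank)$ is their composition product, the expression $(\partial_{\Gamma/\pi} \otimes_\R 1_{\Cz(\ucov{M})}) \otimes (1_{\Cz(\Gamma/\pi)} \otimes_\R x)$ collapses to $x \otimes_\R \partial_{\Gamma/\pi}$, as wanted. The difficulty here is purely one of bookkeeping — the genuine geometric input, namely the comparison of the two boundary classes, is already contained in \cref{prop:boundaryElement}. As a robust alternative, since every arrow in $\circledast$ is represented by an explicit Kasparov module, one may instead verify commutativity by unwinding the modules, exactly in the style of the proof of \cref{prop:boundaryElement}.

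For the right-hand part, fix a $\Gamma$-map $\ucov{M} \to \Eub\Gamma$ — it exists and is unique up to $\Gamma$-homotopy since $\ucov{M}$ is a free, hence proper, $\Gamma$-CW-complex — and take its restriction as the $\pi$-map $\ucov{N} \to \Eub\Gamma$; the three unlabeled horizontal maps into the $\Eub$-level K-homology are the induced push-forwards composed with the colimit inclusions. The upper square commutes because push-forward along $\ucov{M} \to \Eub\Gamma$ acts only on the domain of the $\KK$-groups while $\blank \otimes_\R \partial_{\Gamma/\pi}$ acts only on the coefficient algebra; hence the two operations commute by associativity of the Kasparov product, compatibly with the colimit defining $\RKO_\ast^\Gamma(\Eub\Gamma; \blank)$. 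The lower square commutes by naturality of $\Ind_\pi^\Gamma$: its space-level form \eqref{eq:inductionCompose} for $W = \ucov{M}$ and its colimit form \eqref{eq:inductionIso} are both assembled from $\ii_\pi^\Gamma$ followed by push-forwards, and the two routes $\ucov{N} \hookrightarrow \ucov{M} \to \Eub\Gamma$ and $\ucov{N} \to \Eub\Gamma$ induce the same $\Gamma$-homotopy class of maps $\Gamma \times_\pi \ucov{N} \to \Eub\Gamma$. Assembling the left rectangle, the square~$\circledast$ and the two right-hand squares yields commutativity of the whole diagram.
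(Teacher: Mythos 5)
Your proposal is correct and follows essentially the same route as the paper: the left rectangle is delegated to \cref{rem:PontryaginThomViaKK}, the right-hand squares to naturality of the Kasparov product and of induction, and the square $\circledast$ is verified by the multiplicativity of Kasparov induction (\cite{KasparovConspectus}*{\S 5, Theorem 1}) combined with \cref{prop:boundaryElement}, exactly as in the paper. Your extra step of checking by inspection of Kasparov modules that $\ii_\pi^\Gamma \mathrm{r}_\pi^\Gamma(\xi) = 1_{\Cz(\Gamma/\pi)} \otimes_\R \xi$ under the isomorphism \labelcref{eq:Xinduction} just makes explicit what the paper leaves implicit.
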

\begin{proof}
  The rectangle on the left-hand side was already discussed in \cref{rem:PontryaginThomViaKK}.
  The commutativity of the squares on the right-hand side is due to naturality of the Kasparov product and the induction map.
  It remains to show that the rectangle $\circledast$ commutes.
  Indeed, let $\xi \in \KK_\ast^\Gamma(\Cz(\ucov{M}), \R)$.
  Then by \cite{KasparovConspectus}*{\S 5, Theorem 1} and \cref{prop:boundaryElement}, we obtain
  \begin{align*}
  \Ind_\pi^\Gamma \left( \partial_{\ucov{M}}^\pi \otimes_{\Cz(\ucov{M})} \mathrm{r}_\pi^\Gamma(\xi) \right)
  &= \Ind_\pi^\Gamma(\partial^\pi_{\ucov{M}}) \otimes_{\Ind_\pi^\Gamma(\Cz(\ucov{M}))} \mathrm{i}_\pi^\Gamma \mathrm{r}_\pi^\Gamma(\xi) \\
  &= \left( \partial_{\Gamma/\pi} \otimes_\R 1_{\Cz(\ucov{M})} \right)
  \otimes_{\Cz(\Gamma / \pi) \otimes \Cz(\ucov{M})}
  \left( 1_{\Cz(\Gamma / \pi)} \otimes_\R \xi \right) \\
  &= \partial_{\Gamma / \pi} \otimes_\R \xi = \xi \otimes_\R \partial_{\Gamma / \pi}.
  \end{align*}
  We have again implicitly used the isomorphism \labelcref{eq:Xinduction}.
\end{proof}

\begin{theorem}\label{thm:generalKKtransfer}
  Suppose \cref{setup:KKTransfer}.
  Then we have a commutative diagram:
  \begin{center}
  \begin{tikzcd}
\KO_\ast(M) \dar \rar \dar & \RKO_\ast^\Gamma(\Eub \Gamma) \rar \dar & \KO_\ast(\Cstar \Gamma) \dar \\
\KO_{\ast-1}(N) \rar & \RKO_{\ast-1}^\pi(\Eub \pi) \rar & \KO_{\ast-1}(\Cstar \pi)
\end{tikzcd}
\end{center}
This  holds for the reduced as well as maximal group $\Cstar$-algebras.
Moreover, this diagram is natural in the data of \cref{setup:KKTransfer} for
fixed subgroup inclusions $\pi \leq \Gamma$.
\end{theorem}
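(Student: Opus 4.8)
The plan is to construct the two right-most vertical maps — $\underline{\tau}_{\Gamma,\pi}\colon\KO^\Gamma_\ast(\Eub\Gamma)\to\KO^\pi_{\ast-1}(\Eub\pi)$ and $\sigma_{\Gamma,\pi}\colon\KO_\ast(\Cstar\Gamma)\to\KO_{\ast-1}(\Cstar\pi)$ — directly out of the boundary class $\partial_{\Gamma/\pi}\in\KK^\Gamma_{-1}(\R,\Cz(\Gamma/\pi))$ of \cref{defi:GroupBoundaryClass}, and then to read off commutativity of the two squares from \cref{lem:inductionDiagram,lem:descentCommutative}. Since a universal space $\Eub\Gamma$ for proper $\Gamma$-actions is also a model for $\Eub\pi$ once the action is restricted to $\pi$, \cref{thm:KKinductionIso} supplies an isomorphism $\Ind^\Gamma_\pi\colon\KO^\pi_{\ast-1}(\Eub\Gamma)\xrightarrow{\ \cong\ }\KO^\Gamma_{\ast-1}(\Eub\Gamma;\Cz(\Gamma/\pi))$ (note that $\Ind^\Gamma_\pi\R=\Cz(\Gamma/\pi)$), and I would define $\underline{\tau}_{\Gamma,\pi}\coloneqq(\Ind^\Gamma_\pi)^{-1}\circ(\blank\otimes_\R\partial_{\Gamma/\pi})$, post-composed with the canonical identification $\KO^\pi_{\ast-1}(\Eub\Gamma)\cong\KO^\pi_{\ast-1}(\Eub\pi)$. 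On the operator-algebra level I take the Kasparov descent $j_\Gamma(\partial_{\Gamma/\pi})\in\KK_{-1}(\Cstar\Gamma,\Cz(\Gamma/\pi)\rtimes\Gamma)$ and set $\sigma_{\Gamma,\pi}\coloneqq\Phi\circ(\blank\otimes_{\Cstar\Gamma}j_\Gamma(\partial_{\Gamma/\pi}))$, where $\Phi\colon\KO_{\ast-1}(\Cz(\Gamma/\pi)\rtimes\Gamma)\xrightarrow{\ \cong\ }\KO_{\ast-1}(\Cstar\pi)$ is Green's imprimitivity isomorphism; all of this is available for the reduced as well as for the maximal completions.

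For the left-hand square I would simply chase the diagram of \cref{lem:inductionDiagram}. Its top row composes to the canonical map $\KO_\ast(M)\to\KO^\Gamma_\ast(\Eub\Gamma)$ appearing in the theorem, its left column is $\tau_{M,N}$, its bottom row composes to the canonical map $\KO_{\ast-1}(N)\to\KO^\pi_{\ast-1}(\Eub\Gamma)\cong\KO^\pi_{\ast-1}(\Eub\pi)$, and the outer rectangle states that $(\blank\otimes_\R\partial_{\Gamma/\pi})$ of the image of $x\in\KO_\ast(M)$ equals $\Ind^\Gamma_\pi$ of the image of $\tau_{M,N}(x)$. Applying $(\Ind^\Gamma_\pi)^{-1}$ to both sides gives precisely that $\underline{\tau}_{\Gamma,\pi}$ of the image of $x$ equals the image of $\tau_{M,N}(x)$ in $\KO^\pi_{\ast-1}(\Eub\pi)$, which is the asserted commutativity.

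For the right-hand square I would invoke \cref{lem:descentCommutative} with $\xi=\partial_{\Gamma/\pi}$ and $k=1$. Its top square gives $(\blank\otimes_{\Cstar\Gamma}j_\Gamma(\partial_{\Gamma/\pi}))\circ\mu^\Gamma=\mu^\Gamma_{\Cz(\Gamma/\pi)}\circ(\blank\otimes_\R\partial_{\Gamma/\pi})$, while its bottom square gives $\mu^\pi=\Phi\circ\mu^\Gamma_{\Cz(\Gamma/\pi)}\circ\Ind^\Gamma_\pi$. Combining these with the definitions above yields $\sigma_{\Gamma,\pi}\circ\mu^\Gamma=\Phi\circ\mu^\Gamma_{\Cz(\Gamma/\pi)}\circ(\blank\otimes_\R\partial_{\Gamma/\pi})=\mu^\pi\circ\underline{\tau}_{\Gamma,\pi}$, i.e. the square commutes; since \cref{lem:descentCommutative} holds for reduced and for maximal group $\Cstar$-algebras and crossed products alike, both cases are covered at once. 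For naturality in the data of \cref{setup:KKTransfer} at a fixed inclusion $\pi\le\Gamma$, I would observe that a morphism of such data is a $\Gamma$-equivariant map of the relevant total spaces compatible with the embeddings, that it carries the collapse class $\partial^\pi_{\ucov{M}}$ to the one for the target, and hence — using \cref{prop:boundaryElement} together with the functoriality of Kasparov induction, of descent, of the imprimitivity isomorphism and of the classical transfer $\tau_{M,N}$ — that the whole diagram maps to the corresponding one.

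I do not expect a single genuinely hard step in this argument: the serious analytic content is already isolated in \cref{prop:boundaryElement} and \cref{lem:descentCommutative}, and what remains is assembly. The point to be most careful about is bookkeeping — making sure that the three identifications in play, namely Kasparov's induction isomorphism of \cref{thm:KKinductionIso}, Green's imprimitivity isomorphism $\Phi$, and the canonical identification of $\Eub\Gamma$ with an $\Eub\pi$, are used coherently, so that ``restricting equivariance from $\Gamma$ to $\pi$'' denotes one and the same operation on the $\Eub$-level, on the $\KK$-level and on the level of group $\Cstar$-algebras, and so that $\underline{\tau}_{\Gamma,\pi}$ genuinely lands in $\KO^\pi_{\ast-1}(\Eub\pi)$ as displayed in the theorem.
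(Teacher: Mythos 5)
Your proposal is correct and follows essentially the same route as the paper: the middle and right vertical maps are defined exactly as you do (via $(\Ind_\pi^\Gamma)^{-1}\circ(\blank\otimes_\R\partial_{\Gamma/\pi})$ and descent of $\partial_{\Gamma/\pi}$ followed by the Morita/imprimitivity isomorphism), the left square is read off from the outer rectangle of \cref{lem:inductionDiagram} together with \cref{thm:KKinductionIso}, and the right square from \cref{lem:descentCommutative} with $\xi=\partial_{\Gamma/\pi}$. The naturality argument also matches the paper's, which records the compatibility $f_{\ucov{N}}\otimes_{\Cz(\ucov{N})}\partial^\pi_{\ucov{M}}=\partial^\pi_{\ucov{M}'}\otimes_{\Cz(\ucov{M}')}f_{\ucov{M}}$ and the fact that $\partial_{\Gamma/\pi}$ is literally the same class for source and target.
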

\begin{proof}
  The left square follows from \cref{lem:inductionDiagram} and the fact that the induction map $\Ind_{\pi}^\Gamma \colon \RKO^\pi_\ast(\Eub \Gamma) \to \RKO^\Gamma_\ast(\Eub \Gamma, \Cz(\Gamma / \pi))$ is an isomorphism (see~\cref{thm:KKinductionIso}).
  The right square follows from \cref{lem:descentCommutative}.

  Next we explain naturality.
  Let $\pi \leq \Gamma$ be fixed and let $\tn \colon N \times \R \hookrightarrow M$ and $\tn^\prime \colon N^\prime \times \R \hookrightarrow M^\prime$ both as in \cref{setup:KKTransfer}.
  Suppose that there are continuous maps $f_M \colon M \to M^\prime$ and $f_N \colon N \to N^\prime$ such that $f_M \circ \tn = \tn^\prime \circ (f_N \times \id)$ and $f_M$ induces the identity on \(\pi_1\).
  Then $f_M$ and $f_N$ induce equivariant maps $f_{\ucov{M}}$ and $f_{\ucov{N}}$ between the corresponding covering spaces and hence equivariant $\ast$\nobreakdash-homomorphisms between the associated function algebras.
  By abuse of notation, we also denote the resulting KK-elements by $f_{\ucov{M}} \in \KK^\pi_0(\Cz(\ucov{M}^\prime), \Cz(\ucov{M}))$ and $f_{\ucov{N}} \in \KK^\pi_0(\Cz(\ucov{N}^\prime), \Cz(\ucov{N}))$.
  The fact that the Mayer--Vietoris boundary classes are natural can then be expressed as:
  \begin{equation}
    f_{\ucov{N}} \otimes_{\Cz(\ucov{N})} \partial_{\ucov{M}}^\pi = \partial_{\ucov{M}^\prime}^\pi \otimes_{\Cz(\ucov{M}^\prime)} f_{\ucov{M}}.
    \label{eq:KKMVNatural}
  \end{equation}
  By construction, the functions $\tilde{\chi} \colon \ucov{M} \to \R$ and $\tilde{\chi}^\prime \colon \ucov{M}^\prime \to \R$ satisfy $\tilde{\chi}^\prime \circ f_{\ucov{M}} = \tilde{\chi}$.
  This shows that the element $\partial_{\Gamma/\pi}$ from \cref{defi:GroupBoundaryClass} is in fact the same for $\ucov{M}$ and $\ucov{M}^\prime$.
  This together with \labelcref{eq:KKMVNatural} implies that the commutative diagram is natural in this situation.
\end{proof}

\begin{remark}
  With little extra work, the naturality assertion of Theorem
  \ref{thm:generalKKtransfer} extends to the case where the groups change. We
  leave the details to the reader.
\end{remark}

\begin{proof}[Proof of \cref{theo:codim_1_long}]
  We first observe that the pair $(M, N)$ precisely fits into \cref{setup:KKTransfer}.
  Thus \cref{thm:generalKKtransfer} already yields the resulting diagram except for the two transformations in the middle which involve the spin bordism and $\KO$-homology of $\Bfree \Gamma$.
  To construct these, we will see that $\Bfree \pi$ and $\Bfree \Gamma$ can be constructed in such a way that $\Bfree \pi$ is a subspace of $\Bfree \Gamma$ and \cref{setup:KKTransfer} becomes applicable.
  This uses the special structure of $\Gamma$ in this codimension $1$
  situation.
  We have two cases:

  The first case is that $N$ separates $M$ into two connected
  components.
  That is, $M= M_+\cup_N M_-$, with codimension $0$ submanifolds
  $M_+,M_-$, both with boundary $N$. Set
  \begin{equation*}
\Gamma_+:=\im (\pi_1(M_+)\to
  \pi_1(M))\subset \Gamma; \qquad \Gamma_-:=\im (\pi_1(M_-)\to
  \pi_1(M))\subset \Gamma.
\end{equation*}
By the van Kampen theorem we then have an
  amalgamated free product $\Gamma = \Gamma_+ *_\pi\Gamma_-$.

  The second case is that $N$ is non-separating.
  Then let $W$ be the manifold with boundary obtained by cutting $M$ open along $N$. $W$ has two
  boundary pieces $N_+,N_-$  both identified with $N$, and we have $M= W/\sim$,
  where $\sim$ is the equivalence relation identifying $N_+$ with $N_-$.
  Let $H:=\im(\pi_1(W)\to \pi_1(M))\subset \Gamma$ be the image under the
  collapse map, where we place the basepoint in $N_+$. The inclusion
  $N=N_+\into W$ then induces an inclusion $\pi\into H$ (injective because
  $\pi\subset\Gamma$). Let $\gamma\colon
  [0,1]\to W$ be any   path from the basepoint in $N_+$ to the corresponding
  point in $N_-$; conjugation with it allows to define a second embedding (as
  $\gamma^{-1}\pi\gamma\subset \Gamma$) induced by the inclusion $N=N_-\subset W$. The van
  Kampen theorem now implies that we get $\Gamma$ as an HNN extension $\Gamma = H *_{\pi=\gamma^{-1}\pi\gamma}$, see e.g.\ \cite{SW77TopologicalMethods}*{Proposition~1.2}.

 Then one can construct a model of $\Bfree \Gamma$ out of
  $\Bfree \pi$ and $\Bfree \Gamma_+$, $\Bfree \Gamma_-$ and $\Bfree H$.
  In the separating case, we have
  \begin{equation*}
    \Bfree \Gamma = \Bfree \Gamma_+ \cup_{\Bfree \pi\times \{-1\}} \Bfree\pi \times [-1,1]
    \cup_{\Bfree\pi\times\{1\}} \Bfree\Gamma_-
  \end{equation*}
  where we construct $\Bfree \Gamma_-, \Bfree\Gamma_-$ such that they contain copies of
  $\Bfree\pi$ which induce the inclusion maps on fundamental groups.
  More specifically, one can construct the classifying spaces in
  question as CW-complexes starting with $M$: first attach cells to $N$ to
  construct $B\pi$.
  Taking the product with $[-1,1]$ and glueing it into $M$
  along a trivialization of a tubular neighborhood of $N$ in $M$ produces
  $M\cup_{N\times [-1,1]} \Bfree \pi\times [-1,1]$.
  Now, attach further cells to $M_+\cup_{N\times\{1\}}\Bfree \pi$ to obtain
    $\Bfree \Gamma_+$ and  to $M_-\cup_{N\times\{-1\}}$, to obtain $\Bfree \Gamma_-$. Here,
    for convenience we slightly change notation and write $M=M_+\cup N\times
    [-1,1]\cup M_-$, glued along the two boundary components of $N\times
    [-1,1]$.

   The construction in the second case, where $N$ is not separating, is
   precisely the same. We then use the fact that the classifying space of an
   HNN-extension $H*_{\pi=\gamma^{-1}\pi\gamma}$ can be obtained from a
     classifying space of $H$ by glueing in $B\pi\times [-1,1]$, attaching the
     two ends according to the two inclusions of $\pi$ and
     $\gamma^{-1}\pi\gamma$ into $H$; and this can be modelled exactly as
     before, starting with $N\times [-1,1]\subset M$.

     In either case, we obtain a diagram of embeddings
     \[
      \begin{tikzcd}
        N \times [-1,1] \dar[hook]  \rar[hook]& \Bfree \pi \times [-1,1] \dar[hook] \\
        M \rar[hook] & \Bfree \Gamma
\end{tikzcd}     \]
     By definition, the collapse map $M\to \Sigma N$  maps every point inside $N \times (-1,1)$ of $N$ to the corresponding point in $N\times (-1,1)\subset \Sigma N$, and every point outside this tubular
    neighborhood to the (collapsed) base point in $\Sigma N$.
    By applying the same construction to $\Bfree \pi \times (-1,1) \subset \Bfree \Gamma$ we get a map $\Bfree \Gamma \to \Sigma \Bfree \pi$ and a commutative diagram
    \[
     \begin{tikzcd}
       M \dar  \rar[hook]& \Bfree \Gamma \dar \\
       \Sigma N \rar & \Sigma \Bfree \pi
\end{tikzcd}     \]
From this we obtain the commutative diagram
\begin{equation}\label{eq:codim1Top}
\begin{tikzcd}[ampersand replacement=\&]
  \Omega_*^{\spin}(M) \rar["c_\ast"] \dar["\tau_{M,N}"] \& \Omega_*^{\spin}(\Bfree \Gamma) \rar \dar["\tau_{\Gamma, \pi}"] \& \KO_\ast(\Bfree \Gamma) \dar["\tau_{\Gamma, \pi}"] \\
  \Omega_{\ast-1}^{\spin}(N) \rar["\bar{c}_\ast"] \& \Omega_{\ast-1}^{\spin}(\Bfree \pi) \rar \& \KO_{\ast-1}(\Bfree \pi).
\end{tikzcd}
\end{equation}
which consists precisely of the first three columns in \labelcref{eq:codim1Transfer}.

Finally, if the space $\Bfree \Gamma$ constructed above is compact (that is, we only had to add finitely many cells), then we are in \cref{setup:KKTransfer}.
The diagram \labelcref{eq:codim1Transfer} is completed by combining \labelcref{eq:codim1Top} with the diagram from \cref{thm:generalKKtransfer} (for $M = \Bfree \Gamma$ and $N = \Bfree \pi$).

In the general case, where $\Bfree \Gamma$ is non-compact, $\KO_\ast(\Bfree \Gamma)$ is identified with the colimit of $\KO_\ast(X)$, where $X$ runs over compact subsets of $\Bfree \Gamma$.
Then the colimit can be restricted to the directed set of those compact subsets $X$ such that $X \cap (\Bfree \pi \times [-1,1]) = Y \times [-1,1]$ for some compact $Y \subseteq \Bfree \pi$.
Since $\Gamma = \pi_1(M)$ is finitely presented and $\pi =\im(\pi_1(N)\to \pi_1(M))$ is finitely generated, we can arrange it so that we can further restrict the colimit to those pairs $(X, Y)$ with $\pi_1(X) = \Gamma$ and $\pi_1(Y) \twoheadrightarrow \pi$.
Then we may apply \cref{thm:generalKKtransfer} to each of these pairs $(X, Y)$ and the naturality statement of \cref{thm:generalKKtransfer} implies that this passes to the colimit and fits together with \labelcref{eq:codim1Top}, thereby completing \labelcref{eq:codim1Transfer}.
\end{proof}

\section{General transfer construction} \label{sec:generalTransfer}
In this section, we describe a general transfer construction which is used in the proof of the remaining results.
Suppose that we are in the situation of \cref{setup:geom} for some codimension \(k\).
Let $E$ be a multiplicative equivariant generalized homology theory (see, e.g., \cite{Lueck}).
Assume that \(\nu\) is $E$-oriented and let $\theta\in E^k(\disk\nu,\sphere\nu)$ be the corresponding Thom class.
Then the \emph{classical transfer map} (see \cite{RudyakThomSpectra}*{Chapter V.2}) is given by
\[\tau_\theta\colon\myE{\ast}(M)\xrightarrow{}\myE{\ast}(M,M\setminus \disk\nu^\circ)\iso\myE{\ast}(\disk\nu,\sphere\nu)\xrightarrow{\cap\theta}\myE{\ast-k}(N).\]
If \(M\) and \(N\) are \(E\)-oriented manifolds, then —fixing orientations on
\(M\) and \(N\)— we obtain an orientation on the normal bundle such that the corresponding transfer sends the fundamental class of $M$ to the fundamental class of $N$.

It is our goal to find conditions under which $\tau_\theta$ can be extended to the classifying spaces for free actions, or even the classifying spaces for proper actions, such that the following diagram commutes, where the horizontal arrows are induced by classifying maps:
\[\begin{tikzcd}
E_*(M)\arrow[r]\arrow[d,"\tau_\theta"] &
E_*(\Bfree{\Gamma})\arrow[r]\arrow[d,dashed] &
E^\Gamma_*(\Eub{\Gamma})\arrow[d,dashed]\\
E_{*-k}(N)\arrow[r] &
E_{*-k}(\Bfree{\pi})\arrow[r] &
E^\pi_{*-k}(\Eub{\pi})
\end{tikzcd}\]

To do so we will work in the equivariant setting.
Conceptually, our method uses \enquote{locally finite equivariant generalized homology theories}. But there does not seem to be an established set of axioms for this concept, and it would take us to far to introduce one. Instead, we give an ad-hoc definition for an \enquote{lf-restriction} operation (the \enquote{lf} stands for locally finite).

\begin{definition}\label{def:locally-finite}
Let $\Gamma$ be a countable discrete group, let $\pi$ be a subgroup, and let $Y$ be a proper $\Gamma$-CW-complex. A $\pi$-invariant subspace $K\subset Y$ is called \emph{$\Gamma$\nobreakdash-locally $\pi$\nobreakdash-precompact} if for every $\Gamma$-cell $X\subset Y$ the intersection $K\cap X$ is contained in only finitely many of the $\pi$-cells that together form $X$.
\end{definition}

\begin{definition}\label{def:lf-restrictions}
A generalized equivariant homology theory $E$ has \emph{lf-restrictions} if
for every inclusion of groups $\pi<\Gamma$, for every proper
$\Gamma$-CW-complex $Y$ and for every $\Gamma$-locally $\pi$-precompact subspace $K\subset Y$ such
that $(Y,Y\setminus K)$ is a pair of $\pi$-CW-complexes, there is a homomorphism
\[r_K\colon E_*^\Gamma(Y)\to E_*^\pi(Y,Y\setminus K).\]

These maps must be natural with respect to cellular equivariant maps of CW-pairs, and they must be compatible with the induction isomorphisms in the following sense:
Suppose that $\Gamma$ acts freely on $Y$, that $\overline{K}$ is a $\pi$-subcomplex of $Y$, and that $\Gamma\backslash \Gamma\cdot\overline{K}=\pi\backslash \overline{K}$.
The latter condition means $\gamma\cdot\overline{K}\cap \overline{K}\neq\emptyset$ is possible only for $\gamma\in\pi\subset\Gamma$.
Then the following diagram makes sense
\begin{equation*}
\resizebox{\linewidth}{!}{\begin{tikzcd}[ampersand replacement=\&]
E_*^\Gamma(Y) \arrow[r, "r_K"]\arrow{d}{\cong}[swap]{\mathrm{ind}} \&
E_*^\pi(Y,Y\setminus K)\arrow{r}{\cong}[swap]{\mathrm{exc}} \&
E_*^\pi(\overline{K},\overline{K}\cap(Y\setminus K))\arrow{d}{\cong}[swap]{\mathrm{ind}} \\
E_*(\Gamma\backslash Y) \arrow[r] \&
E_*(\Gamma\backslash Y,(\Gamma\backslash Y)\setminus(\Gamma\backslash
\Gamma\cdot K))\arrow{r}{\cong}[swap]{\mathrm{exc}} \& E_*(\pi\backslash\overline{K},\pi\backslash(\overline{K}\cap (Y\setminus K)))
\end{tikzcd}}
\end{equation*}
where $\mathrm{ind}$ denotes induction maps, $\mathrm{exc}$ denotes excision,
and the bottom left map is inclusion induced. We require that the diagram commutes.
\end{definition}

\begin{notation}
In \cref{setup:geom} we assume that $\disk\nu$, $M\setminus\disk\nu^\circ$ and $\sphere\nu$ are subcomplexes of $M$, and that the $\Gamma$-CW-structure on $\ucov{M}$ is the one lifted from $M$.
\end{notation}

We will show in \cref{sec:homologies-with-lf} that real K-homology, the most relevant example for $\myE{\ast}$, has lf-restrictions.

Assume now that $E_*$ is a generalized equivariant multiplicative homology theory with lf-restrictions, $\pi<\Gamma$ and $Y$ is a proper $\Gamma$-space. Given an element $\theta\in E^k_\pi(Y,\ccomp{Y}{K})$, with $K$ $\Gamma$-locally $\pi$-precompact, we define the equivariant transfer map
\[\tau^\mathrm{eq}_{\theta,K}\colon E_*^\Gamma(Y)\to E_{*-k}^\pi(Y),\qquad y\mapsto r_K(y)\cap \theta.\]
Because both the cap product and the lf-restriction are natural with respect to maps of pairs $(Y,\ccomp{Y}{K})$, the same is true for $\tau^\mathrm{eq}_{\theta,K}$. In particular, if $i\colon \ccomp{Y}{K'}\to\ccomp{Y}{K}$ is the inclusion of the complement of a larger $\Gamma$-locally $\pi$-precompact subspace $K'$, then $\tau^\mathrm{eq}_{i^*(\theta),K'}=\tau^\mathrm{eq}_{\theta,K}$. Thus, an element $\theta\in\colim_K E^*_\pi(Y,\ccomp{Y}{K})$ defines a map $\tau^\mathrm{eq}_\theta$, where the colimit runs over all $\pi$-subcomplexes $Y\setminus K$ with \(\Gamma\)-locally \(\pi\)-precompact complement.

If $f\colon X\to Y$ is a $\Gamma$-equivariant cellular map between proper $\Gamma$-CW-complexes, then for any $\Gamma$-locally $\pi$-precompact subset $K\subset Y$ one can form the subcomplex $L\subset X$ consisting of all cells whose image under $f$ does not intersect $K$.
It follows directly from \cref{def:locally-finite} that $X\setminus L$ is $\Gamma$-locally $\pi$-precompact. This means that the elements in $\colim_K E^*_\pi(Y,\ccomp{Y}{K})$ can be pulled back to $X$ via $f$.
Furthermore, the pullback only depends on the homotopy class of $f$.
Indeed, if $H\colon X\times[0,1]\to Y$ is a homotopy from $f$ to $f'$, we can
restrict ourselves to the smaller subcomplex $L_H\subset X$ of all cells whose
image under $f$ does not intersect $K$ during the whole homotopy. Then $X\setminus L_H$ is still $\Gamma$-locally $\pi$-precompact.

\medskip

We now establish a general-purpose result that will allow us to extend the classical transfer map in many cases.

\begin{theorem}\label{thm:transfer-extension}
Assume \cref{setup:geom}.
Let $\theta\in E^k(\disk\nu,\sphere\nu)\cong E^k_\pi(\ucov{\disk\nu},\ucov{\sphere\nu})\cong E^k_\pi(\ucov{M},\ccomp{\ucov{M}}{\ucov{\disk\nu}^\circ})$ and let $\tau_\theta\colon E_*(M)\to E_{*-k}(N)$ be the associated classical transfer map. Consider the following conditions:
\begin{enumerate}
\item\label{item:lift-to-Efree}
There exist $\Gamma$-locally $\pi$-precompact subspaces $K\supset\ucov{\disk\nu}$ and $K_\Efree\subset\Efree{\Gamma}$ such that the classifying map $\ucov{M}\to\Efree{\Gamma}$ is a map of $\pi$-CW-pairs $(\ucov{M},\ccomp{\ucov{M}}{K})\to(\Efree{\Gamma},\ccomp{\Efree{\Gamma}}{K_{\Efree}})$, and such that the restriction $i_{\ucov{\disk\nu},K}^*(\theta)\in E_\pi^k(\ucov{M},\ccomp{\ucov{M}}{K})$ can be lifted to $\theta_\Efree\in E^k_\pi(\Efree{\Gamma},\ccomp{\Efree{\Gamma}}{K_{\Efree}})$.
\item\label{item:lift-to-Eub}
There exist $\Gamma$-locally $\pi$-precompact subspaces $K_\Efree\subset K'_{\Efree}\subset\Efree{\Gamma}$ and $K_{\underline{\mathrm{E}}}\subset\Eub{\Gamma}$ such that the classifying map $\Efree{\Gamma}\to\Eub{\Gamma}$ is a map of $\pi$-CW-pairs $(\Efree{\Gamma},\ccomp{\Efree{\Gamma}}{K'_{\Efree}})\to(\Eub{\Gamma},\ccomp{\Eub{\Gamma}}{K_{\underline{\mathrm{E}}}})$, and such that the restriction $i_{K_{\Efree},K'_{\Efree}}^*(\theta_{\Efree})\in E_\Gamma^k(\Efree{\Gamma},\ccomp{\Efree{\Gamma}}{K'_{\Efree}})$ can be lifted to $\theta_{\underline{\mathrm{E}}}\in E^k_\pi(\Eub{\Gamma},\ccomp{\Eub{\Gamma}}{K_{\underline{\mathrm{E}}}})$.
\end{enumerate}
If condition \ref{item:lift-to-Efree} is satisfied, $\tau_\theta$ can be extended to $E_*(\Bfree{\Gamma})\to E_{*-k}(\Bfree{\pi})$. If, in addition, condition \ref{item:lift-to-Eub} is satisfied, $\tau_\theta$ can be extended further to $E_*^\Gamma(\Eub{\Gamma})\to E_{*-k}^\pi(\Eub{\pi})$.
\end{theorem}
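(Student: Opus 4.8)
The plan is to reduce everything to the equivariant transfer $\tau^\mathrm{eq}$ constructed above, exploiting that it is natural with respect to equivariant cellular maps of $\pi$-CW-pairs and insensitive to equivariant homotopies of such maps. Assume condition \ref{item:lift-to-Efree}. Since $\Gamma$ acts freely on $\Efree{\Gamma}$, and since $\Efree{\Gamma}$ with the restricted $\pi$-action is a model for $\Efree{\pi}$, the induction isomorphisms identify $E_*^\Gamma(\Efree{\Gamma})\cong E_*(\Bfree{\Gamma})$ and $E_*^\pi(\Efree{\Gamma})\cong E_*(\Bfree{\pi})$. The lift $\theta_\Efree$ determines an element of $\colim_K E^k_\pi(\Efree{\Gamma},\ccomp{\Efree{\Gamma}}{K})$ and hence, as explained above, an equivariant transfer $\tau^\mathrm{eq}_{\theta_\Efree}\colon E_*^\Gamma(\Efree{\Gamma})\to E_{*-k}^\pi(\Efree{\Gamma})$; under the identifications above I would declare this to be the extension $E_*(\Bfree{\Gamma})\to E_{*-k}(\Bfree{\pi})$. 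If moreover condition \ref{item:lift-to-Eub} holds, then the same recipe applied to $\theta_{\underline{\mathrm E}}$, using that $\Eub{\Gamma}$ is also a model for $\Eub{\pi}$, produces $\tau^\mathrm{eq}_{\theta_{\underline{\mathrm E}}}\colon E_*^\Gamma(\Eub{\Gamma})\to E_{*-k}^\pi(\Eub{\Gamma})\cong E_{*-k}^\pi(\Eub{\pi})$, which I would take to be the further extension.

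It then remains to check that these maps are compatible with $\tau_\theta$ and with the canonical transformations $E_*(\Bfree{\Gamma})\to E_*^\Gamma(\Eub{\Gamma})$ and $E_{*-k}(\Bfree{\pi})\to E_{*-k}^\pi(\Eub{\pi})$. The key point is that $(M,N)$ itself is an instance of the construction: applying the equivariant transfer to $Y=\ucov{M}$ with the $\Gamma$-locally $\pi$-precompact subset $\ucov{\disk\nu}^\circ$ and the class $\theta\in E^k_\pi(\ucov{M},\ccomp{\ucov{M}}{\ucov{\disk\nu}^\circ})$ yields $\tau^\mathrm{eq}_\theta\colon E_*^\Gamma(\ucov{M})\to E_{*-k}^\pi(\ucov{M})$. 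Because $\theta$ is excised from a class on $(\ucov{\disk\nu},\ucov{\sphere\nu})$ and the disk bundle $\pi$-equivariantly deformation retracts onto the zero section, $\tau^\mathrm{eq}_\theta$ factors through $E_{*-k}^\pi(\ucov{\disk\nu})\cong E_{*-k}^\pi(\ucov{N})$. I would then invoke the compatibility of the lf-restriction $r_K$ with the induction isomorphisms that is built into \cref{def:lf-restrictions} — applied with $Y=\ucov{M}$ and $\overline K=\ucov{\disk\nu}$, which is a $\pi$-subcomplex whose $\Gamma$-translates by elements outside $\pi$ are disjoint from it, thanks to the embedding $\bar{\tn}$ into $\bar{M}$ from \cref{setup:geom} — together with the compatibility of the equivariant cap product and of the Thom class with induction, to conclude that under the induction isomorphisms $E_*^\Gamma(\ucov{M})\cong E_*(M)$ and $E_{*-k}^\pi(\ucov{N})\cong E_{*-k}(N)$ the map $\tau^\mathrm{eq}_\theta$ becomes exactly the classical transfer $\tau_\theta$.

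Granting this identification, the rest is pure naturality. By condition \ref{item:lift-to-Efree} the classifying map $\ucov{M}\to\Efree{\Gamma}$ is a $\Gamma$-equivariant cellular map of $\pi$-CW-pairs $(\ucov{M},\ccomp{\ucov{M}}{K})\to(\Efree{\Gamma},\ccomp{\Efree{\Gamma}}{K_\Efree})$ along which $\theta_\Efree$ pulls back to the colimit class of $\theta$ — here one uses $K\supseteq\ucov{\disk\nu}$, so that the restriction of $\theta$ to the pair with complement $\ccomp{\ucov{M}}{K}$ still represents $\theta$ in the colimit. Naturality of $\tau^\mathrm{eq}$ then gives a commutative square relating $\tau^\mathrm{eq}_\theta$ and $\tau^\mathrm{eq}_{\theta_\Efree}$ along the classifying maps; passing to $\Gamma$- and $\pi$-quotients and using that the induced map $N\cong\pi\backslash\ucov{N}\to\pi\backslash\Efree{\Gamma}$ agrees up to homotopy with $\bar c\colon N\to\Bfree{\pi}$ (both classify the covering $\ucov{N}\to N$) turns it into the left-hand square of \cref{eqn:general-extension-diagram}. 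Repeating the argument for the classifying map $\Efree{\Gamma}\to\Eub{\Gamma}$ supplied by condition \ref{item:lift-to-Eub} — along which $\theta_{\underline{\mathrm E}}$ pulls back to the colimit class of $\theta_\Efree$ since $K_\Efree\subseteq K'_\Efree$ — produces the right-hand square, and stacking the two completes the diagram.

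I expect the only genuine work to lie in the identification of the second paragraph: one must carefully track the interplay of $r_K$, excision, the equivariant cap product and the induction isomorphisms, and match the various classifying maps with $c$ and $\bar c$. The homotopy invariance of $\tau^\mathrm{eq}$ is what makes all of this independent of the choices of classifying maps and of the auxiliary subspaces $K$, $K_\Efree$, $K'_\Efree$, $K_{\underline{\mathrm E}}$.
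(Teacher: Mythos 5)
Your proposal is correct and follows essentially the same route as the paper: identify the classical transfer $\tau_\theta$ with the equivariant transfer $\tau^{\mathrm{eq}}_{\theta''}$ on $\ucov{M}$ via the induction-compatibility of lf-restrictions, excision and the cap product, and then propagate along the classifying maps $\ucov{M}\to\Efree\Gamma\to\Eub\Gamma$ by naturality of $\tau^{\mathrm{eq}}$, with conditions (1) and (2) guaranteeing that the lifted Thom classes pull back to (the colimit classes of) the originals. The ingredients you flag as "the only genuine work" are precisely those the paper assembles into its first commutative diagram, so nothing is missing.
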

\begin{proof}
Consider first the following diagram where $\theta'$ and $\theta''$ are the images of $\theta$ under the isomorphisms $E^k(\disk\nu,\sphere\nu)\cong E^k_\pi(\ucov{\disk\nu},\ucov{\sphere\nu})\cong E^k_\pi(\ucov{M},\ccomp{\ucov{M}}{\ucov{\disk\nu}^\circ})$:
\[\begin{tikzcd}
E_*(M)\arrow[d]\arrow[ddr, bend left, "\tau_\theta"]\arrow[ddddd, bend left, in=240, out=300, swap, "\mathrm{ind}^{-1}"] & \\
E_*(M,\ccomp{M}{\disk\nu}^\circ)\arrow{d}{\cong}[swap]{\mathrm{exc}} & \\
E_*(\disk\nu,\sphere\nu)\arrow{d}{\cong}[swap]{\mathrm{ind}^{-1}}\arrow[r,"\cap\theta"] &
E_{*-k}(\disk\nu)\arrow{d}{\cong}[swap]{\mathrm{ind}^{-1}}\\
E_*^\pi(\ucov{\disk\nu},\ucov{\sphere\nu})\arrow[r,"\cap\theta'"] &
E_{*-k}^\pi(\ucov{\disk\nu})\arrow{d}[swap]{i_*}\\
E_*^\pi(\ucov{M},\ccomp{\ucov{M}}{\ucov{\disk\nu}^\circ})\arrow{u}{\mathrm{exc}}[swap]{\cong}\arrow[r,"\cap\theta''"] &
E_{*-k}^\pi(\ucov{M})\\
E_*^\Gamma(\ucov{M})\arrow[u,"r_{\disk\nu}"]\arrow[ur, bend right, swap, "\tau^\mathrm{eq}_{\theta''}"] &
\end{tikzcd}\]
The left part of the diagram commutes by the compatibility of lf-restrictions with induction (setting $K=\ucov{\disk\nu}^\circ$). The top and bottom triangles commute by the definition of $\tau_\theta$ and $\tau^\mathrm{eq}_{\theta''}$, respectively. The top square commutes because the cap product is compatible with induction, the bottom square commutes because the cap product is natural.

Consider next the following two diagrams:
\[\begin{tikzcd}
E_*(M)\arrow[r,"\tau_\theta"]\arrow[d,swap,"\mathrm{ind}^{-1}"] &
E_{*-k}(N)\arrow[d,"i_*\circ\mathrm{ind}^{-1}"] &[1.5cm]
E_*(M)\arrow[r,"\tau_\theta"]\arrow[d,swap,"\mathrm{ind}^{-1}"] &
E_{*-k}(N)\arrow[d,"i_*\circ\mathrm{ind}^{-1}"]\\
E_*^\Gamma(\ucov{M})\arrow[r,"\tau^\mathrm{eq}_{\theta''}"]\arrow[d] &
E_{*-k}^\pi(\ucov{M})\arrow[d] &
E_*^\Gamma(\ucov{M})\arrow[r,"\tau^\mathrm{eq}_{\theta''}"]\arrow[d] &
E_{*-k}^\pi(\ucov{M})\arrow[d]\\
E_*^\Gamma(\Efree{\Gamma})\arrow{d}{\cong}[swap]{\mathrm{ind}^{-1}}\arrow[r,"\tau^\mathrm{eq}_{\theta_\Efree}"] &
E_{*-k}^\pi(\Efree{\Gamma})\arrow{d}{\mathrm{ind}^{-1}}[swap]{\cong} &
E_*^\Gamma(\Efree{\Gamma})\arrow{d}\arrow[r,"\tau^\mathrm{eq}_{\theta_\Efree}"] &
E_{*-k}^\pi(\Efree{\Gamma})\arrow{d}\\
E_*(\Bfree{\Gamma})\arrow[r,dashed] &
E_{*-k}(\Bfree{\pi}) &
E_*^{\Gamma}(\Eub{\Gamma})\arrow[r,"\tau^\mathrm{eq}_{\theta_{\underline{\mathrm{E}}}}"] &
E_{*-k}^\pi(\Eub{\pi})
\end{tikzcd}\]
In both diagrams the top square is given by the outer arrows of the previous diagram. Hence, it commutes. The middle squares in both diagrams, as well as the bottom square in the second diagram, commute by naturality of the equivariant transfer map. The dashed morphism in the left diagram is defined to make the bottom square commutative.

We note that $\Efree\Gamma$ and $\Eub\Gamma$ with the group action restricted to $\pi$ are models for $\Efree\pi$ and $\Eub\pi$, respectively, and the composition along the left and right columns in both diagrams are classifying maps. Hence the claims follow.
\end{proof}

\begin{remark}\label{rmk:transfer-compatible-with-codim-1}
Under certain conditions there is another canonical way to extend $\tau_\theta$ to $E_*(\Bfree{\Gamma})\to E_{*-k}(\Bfree{\pi})$:
Assume that we have models $\Bfree\pi\subset\Bfree{\Gamma}$ such that a neighborhood of $\Bfree{\pi}$ is homeomorphic to a $k$-disk bundle $\disk\nu_{\Bfree}$. Then any Thom class $\theta_\Bfree\in E^k(\disk\nu_\Bfree,\sphere\nu_\Bfree)$ gives rise to a transfer map $\tau_{\theta_\Bfree}\colon E_*(\Bfree{\Gamma})\to E_{*-k}(\Bfree{\pi})$.
Assume further that the classifying map $M\to\Bfree{\Gamma}$ restricts to a bundle map $\disk\nu\to\disk\nu_\Bfree$ and sends $M\setminus\disk\nu^\circ$ into $\Bfree{\Gamma}\setminus{\disk\nu_\Bfree}^\circ$, and that the Thom class $\theta$ is the pullback of $\theta_\Bfree$ under the classifying map. Then $\tau_{\theta_\Bfree}$ extends $\tau_\theta$.
This is precisely how the maps in \labelcref{eq:codim1Top} are obtained in the proof of \cref{theo:codim_1_long}.

The extension $\tau_{\theta_\Bfree}$ coincides with the one obtained from \cref{thm:transfer-extension}, with $\theta_\Efree$ the image of $\theta_\Bfree$ under the isomorphisms $E^k(\disk\nu_\Bfree,\sphere\nu_\Bfree)\cong
 E^k_\pi(\ucov{\disk\nu_{\Bfree}},
\ucov{\sphere\nu_{\Bfree}})\cong
 E^k_\pi(\ucov{\Bfree\Gamma},
\ccomp{\ucov{\Bfree\Gamma}}
{\ucov{\disk\nu_{\Bfree}}^\circ})$.
Indeed, the first diagram of the preceding proof, applied to the pair $\disk\nu_\Bfree\subset\Bfree\Gamma$, shows that $\mathrm{ind}^{-1}\circ\tau^\mathrm{eq}_{\theta_\Efree}\circ\mathrm{ind}=i_*\circ\tau_{\theta_\Bfree}$, where $i\colon\disk\nu_\Bfree\to\pi\backslash\Efree\Gamma$ is a homotopy equivalence.

In codimension $\geq 2$ it is generally not clear when the above conditions can be satisfied. This is discussed in \cite{NitscheThesis}*{5.3.9--11} for codimension $2$.
\end{remark}

\begin{remark}
If we assume that the lf-restrictions are transitive with respect to inclusions of subgroups $\pi'<\pi<\Gamma$, the equivariant transfer map will also be transitive in the sense that $\tau^\mathrm{eq}_{\theta'}\circ\tau^\mathrm{eq}_\theta=\tau^\mathrm{eq}_{\theta'\cup\theta}$.
We will not make use of this in the following.
\end{remark}

\subsection{Homology theories with lf-restrictions}\label{sec:homologies-with-lf}

\begin{lemma}\label{lem:sing-homology-has-lf}
Bredon homology with constant coefficients and trivial group action on the coefficients has lf-restrictions.
\end{lemma}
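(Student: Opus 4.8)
The plan is to argue entirely on cellular chain complexes of $\Gamma$-CW-complexes, where Bredon homology with a constant coefficient system becomes completely explicit. First I would fix the cellular model. For a proper $\Gamma$-CW-complex $Y$ the cellular chain complex $C^{\mathrm{cw}}_\ast(Y)$ is a complex of $\mathbb{Z}\Gamma$-modules, each $C^{\mathrm{cw}}_n(Y)$ being a direct sum, over the equivariant $n$-cells, of the permutation modules $\mathbb{Z}[\Gamma/F]$ induced from the (finite) cell stabilizers $F$, and Bredon homology of $Y$ with the constant coefficient system in a commutative ring $A$ is
\[
  E^\Gamma_\ast(Y)\;=\;H_\ast\!\bigl(A\otimes_{\mathbb{Z}\Gamma}C^{\mathrm{cw}}_\ast(Y)\bigr)\;\cong\;H_\ast(\Gamma\backslash Y;A).
\]
If $Y\setminus K$ is a $\pi$-subcomplex, then $C^{\mathrm{cw}}_\ast(Y\setminus K)\hookrightarrow C^{\mathrm{cw}}_\ast(Y)$ is a split inclusion of $\mathbb{Z}\pi$-modules and $E^\pi_\ast(Y,Y\setminus K)=H_\ast\bigl(A\otimes_{\mathbb{Z}\pi}(C^{\mathrm{cw}}_\ast(Y)/C^{\mathrm{cw}}_\ast(Y\setminus K))\bigr)$, a complex freely generated over $A$ by the $\pi$-cells of $Y$ that lie in $K$.

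Next I would construct $r_K$ as the homology of an explicit chain map, the ``norm'' (or covering transfer). For a $\mathbb{Z}\Gamma$-module $M$ that is a sum of modules induced from finite subgroups there is a well-defined $\mathbb{Z}$-linear norm map $a\otimes m\mapsto\sum_{g\in\pi\backslash\Gamma}a\otimes gm$ from $A\otimes_{\mathbb{Z}\Gamma}M$ into the completion $\widehat{A\otimes_{\mathbb{Z}\pi}M}$ allowing locally finite infinite sums; independence of all choices follows by reindexing, and finiteness of the isotropy groups---i.e.\ properness of the action---guarantees that the sum is locally finite. Applying this degreewise to $M=C^{\mathrm{cw}}_\ast(Y)$ and composing with the projection onto $C^{\mathrm{cw}}_\ast(Y)/C^{\mathrm{cw}}_\ast(Y\setminus K)$ gives
\[
  \mathrm{Norm}_K\colon\;A\otimes_{\mathbb{Z}\Gamma}C^{\mathrm{cw}}_\ast(Y)\;\longrightarrow\;A\otimes_{\mathbb{Z}\pi}\!\bigl(C^{\mathrm{cw}}_\ast(Y)/C^{\mathrm{cw}}_\ast(Y\setminus K)\bigr),\qquad a\otimes c\;\longmapsto\;\sum_{g\in\pi\backslash\Gamma}a\otimes\overline{gc},
\]
where $\overline{gc}$ denotes the image of the cell $gc$ in the quotient complex. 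For a single $\Gamma$-cell $c$ only finitely many terms are nonzero, since $\overline{gc}$ vanishes unless $gc$ lies in $K$ and $K$ meets only finitely many $\pi$-cells over a given $\Gamma$-cell---which is exactly the hypothesis that $K$ is $\Gamma$-locally $\pi$-precompact in the sense of \cref{def:locally-finite}---so $\mathrm{Norm}_K$ is defined on all of $A\otimes_{\mathbb{Z}\Gamma}C^{\mathrm{cw}}_\ast(Y)$. It is a chain map because the cellular differential of $C^{\mathrm{cw}}_\ast(Y)$ is $\mathbb{Z}\Gamma$-linear---hence commutes with each $g$, so with $\sum_g g\,(\blank)$---and it preserves the subcomplex $C^{\mathrm{cw}}_\ast(Y\setminus K)$. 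Setting $r_K=H_\ast(\mathrm{Norm}_K)$ gives the desired $r_K\colon E^\Gamma_\ast(Y)\to E^\pi_\ast(Y,Y\setminus K)$. Concretely, $r_K$ sends an equivariant cell $\sigma$ (of stabilizer $F$) to the finite sum of the $\pi$-cells over it lying in $K$, each weighted by the integer multiplicity $[F:g^{-1}\pi g\cap F]$---the local degree of the branched covering $\pi\backslash Y\to\Gamma\backslash Y$ along that cell---and signed by the identification of orientations; since these multiplicities are positive integers, $r_K$ exists over an arbitrary coefficient ring $A$.

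I would then verify the two compatibilities required in \cref{def:lf-restrictions}. Naturality for cellular equivariant maps of CW-pairs is immediate: such a map induces a $\mathbb{Z}\Gamma$-chain map of cellular chains, the norm commutes with $\mathbb{Z}\Gamma$-chain maps, and the passage to the relative complex of $(Y,Y\setminus K)$ is functorial. For the induction compatibility one is in the free case: $\Gamma$ acts freely on $Y$, all isotropy is trivial, all multiplicities equal $1$, and $\pi\backslash Y\to\Gamma\backslash Y$ is an honest covering with fibre $\pi\backslash\Gamma$, so $\mathrm{Norm}_K$ is its classical transfer followed by the collapse onto the cells in $K$. The condition $\Gamma\backslash\Gamma\cdot\overline K=\pi\backslash\overline K$ says that $\overline K$ is a fundamental domain for $\pi$ acting on $\Gamma\cdot\overline K$; hence over each $\Gamma$-cell of $\Gamma\cdot\overline K$ there is exactly one $\pi$-cell in $\overline K$, which under $\pi\backslash\overline K\cong\Gamma\backslash\Gamma\cdot\overline K\subseteq\Gamma\backslash Y$ is identified with that $\Gamma$-cell, and it lies in $K$ iff the $\Gamma$-cell lies in $\Gamma\cdot K$. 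Therefore, after the induction and excision isomorphisms, $r_K$ becomes precisely the inclusion-induced map $E_\ast(\Gamma\backslash Y)\to E_\ast(\Gamma\backslash Y,(\Gamma\backslash Y)\setminus\Gamma\backslash\Gamma\cdot K)$ of the definition, which is the asserted commutativity.

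The one genuinely non-formal point---the main obstacle---is realizing that the correct chain-level $r_K$ is this norm $\sum_{g\in\pi\backslash\Gamma}g\,(\blank)$ on coinvariants rather than the naive ``sum over all $\pi$-cells lying over $\sigma$''. For cells of trivial stabilizer the two coincide, but over a cell with non-trivial finite stabilizer not contained in $\pi$ the naive sum fails to commute with the Bredon differential, whereas the norm repairs this through its built-in branching multiplicities and orientation signs. A minimal illustration: for $\Gamma=\mathbb{Z}/2$ acting on $S^1$ by a reflection and $\pi$ trivial, the norm must unfold a fixed $0$-cell with multiplicity $2$---the local degree of the two-fold branched cover $S^1\to S^1/(\mathbb{Z}/2)$ at a branch point---and with multiplicity $1$ one would not obtain a chain map. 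Once this is arranged, everything else (local finiteness of the norm sum, the chain-map property, naturality, and lining up the several excision and induction isomorphisms in the induction diagram) is routine bookkeeping.
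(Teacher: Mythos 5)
Your proof is correct, and it follows the same basic route as the paper's argument — an explicit cellular formula sending a $\Gamma$-cell to the finitely many $\pi$-cells over it that meet $K$ — but with one genuine difference: the multiplicities. The paper's proof literally takes $C^\pi_i$ to be the formal sum of the relevant $\pi$-cells, each with coefficient $1$, whereas your norm map weights the $\pi$-cell indexed by the double coset $\pi g F$ (with $F$ the cell stabilizer) by $[F:g^{-1}\pi g\cap F]$. You are right that these disagree as soon as some stabilizer is not contained in the relevant conjugate of $\pi$, and right that the unit-coefficient formula then fails to be a chain map; in fact it can already fail on homology. For instance, let $\Gamma=\Z/2$ act on the theta graph $Y$ with fixed vertices $p,q$, one fixed edge $e$ and a swapped pair $a,\gamma a$, all edges running from $p$ to $q$, and take $\pi=1$, $K=Y$: the generator $[\bar a-\bar e]$ of $\HZ^\Gamma_1(Y)\cong \HZ_1(\Gamma\backslash Y;\Z)$ is sent by the unit-coefficient recipe to $a+\gamma a-e$, whose boundary is $q-p\neq 0$, while your norm produces the cycle $a+\gamma a-2e$. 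So for the lemma as stated — arbitrary proper $\Gamma$-CW-complexes — your construction is the correct one, and the paper's formula needs exactly the repair you describe. (The two formulas coincide on free cells, and every application of this lemma in the paper takes place on the free complexes $\ucov{M}$ and $\Efree\Gamma$, which is presumably why the discrepancy is harmless there.) Your verification of naturality and of the induction compatibility is also sound: in the free case the norm is the classical covering transfer, the hypothesis $\Gamma\backslash\Gamma\cdot\overline{K}=\pi\backslash\overline{K}$ forces at most one contributing $\pi$-cell per $\Gamma$-cell, and the diagram of \cref{def:lf-restrictions} commutes already on cellular chains. The only cosmetic quibble: local finiteness of the norm sum in $C_\ast(Y)/C_\ast(Y\setminus K)$ comes from the $\Gamma$-local $\pi$-precompactness of $K$ together with finiteness of stabilizers, not from properness alone — but you invoke the precompactness correctly in the following sentence, so nothing is missing.
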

\begin{proof}
Let $\pi,\Gamma,Y,K$ be as in \cref{def:lf-restrictions}. An element $x\in \HZ^\Gamma_*(Y)$ can be represented by a cycle $\sum_i \lambda_i c^\Gamma_i$, where the $\lambda_i$ lie in the coefficient module, $c^\Gamma_i$ are $\Gamma$-cells of $Y$ and the sum is finite.
We denote by $C^\pi_i$ the formal sum of $\pi$-cells of $Y$ that have non-empty intersection with $c^\Gamma_i\cap K$. This sum is finite because $K$ is $\Gamma$-locally $\pi$-precompact.
Let now $r_K(x)=\left[\sum_i \lambda_i C^\pi_i\right]\in \HZ^\pi_*(Y,\ccomp{Y}{K})$. One can easily check that the map $r_K$, thus defined, satisfies the conditions of \cref{def:lf-restrictions}.
\end{proof}

To show that real K-homology has lf-restrictions we use its geometric description.
For a detailed definition and a proof of equivalence to the analytic description see Baum--Higson--Schick, \cite{BaumHigsonSchick2} (They consider the complex case, but the real case is analogous, see their Remark 4.1).

\begin{definition}\label{def:geometric-K-homology}
	Let $(X,A)$ be a pair of topological spaces and $\Gamma\curvearrowright(X,A)$ a proper action of a discrete group.
	Elements of $\KO_n^\Gamma(X,A)$ are represented by quadruples $(M,s,E,f)$, where $M$ is a $\Gamma$-compact, proper $\Gamma$-manifold of dimension $n$ mod $8$, $s$ is a $\Gamma$-$\spin$-structure on $M$, $E\to M$ a real $\Gamma$-vector bundle and $f\colon M\to X$ a continuous $\Gamma$-equivariant map such that $f(\partial M)\subset A$.
	The equivalence relation is generated by:
	\begin{enumerate}[itemsep=0pt]
		\item
		direct sum of vector bundles equals disjoint union,
		\item
		equivariant bordism,
		\item
		equivariant vector bundle modification.
	\end{enumerate}
\end{definition}

\begin{lemma}\label{lem:K-homology-has-lf}
	Real K-homology has lf-restrictions.
\end{lemma}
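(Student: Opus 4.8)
The plan is to exploit the geometric description of real $\K$-homology from \cref{def:geometric-K-homology} and define $r_K$ by a cell-counting construction analogous to the one used for Bredon homology in \cref{lem:sing-homology-has-lf}. Given a $\Gamma$-locally $\pi$-precompact subspace $K \subset Y$ and a class $x \in \KO_*^\Gamma(Y)$ represented by a quadruple $(M, s, E, f)$ with $f \colon M \to Y$, the idea is to produce a $\pi$-equivariant quadruple representing a class in $\KO_*^\pi(Y, Y \setminus K)$. The natural candidate is to restrict $M$ to the preimage of a neighborhood of $K$: since $M$ is $\Gamma$-compact, its image $f(M)$ meets only finitely many $\Gamma$-cells of $Y$, and by $\Gamma$-local $\pi$-precompactness, $f(M) \cap K$ meets only finitely many $\pi$-cells. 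The first step is therefore to choose an appropriate $\pi$-compact $\pi$-submanifold (with boundary) $M_K \subseteq M$ whose image under $f$ contains $f(M) \cap K$ in its interior and whose boundary maps into $Y \setminus K$, so that $(M_K, s|_{M_K}, E|_{M_K}, f|_{M_K})$ defines a class in $\KO_*^\pi(Y, Y\setminus K)$; then set $r_K(x)$ to be this class.

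Second, I would verify that $r_K$ is well-defined on equivalence classes. The relation on geometric $\K$-homology is generated by bundle addition/disjoint union, bordism, and vector bundle modification; each of these has a local character, so restricting a bordism or a modification to the preimage of a neighborhood of $K$ again yields a relation of the same type in the $\pi$-equivariant relative theory. Here the $\Gamma$-compactness of the bordisms involved (and the properness of the actions) guarantees that only finitely many $\pi$-cells are relevant, so all the restricted objects are again $\pi$-compact. One subtlety is that the choice of the "neighborhood" of $K$ and hence of $M_K$ is not canonical; I would handle this by showing any two choices are related by an obvious bordism in $\KO_*^\pi(Y, Y\setminus K)$, or alternatively by passing to a fixed collar/regular-neighborhood prescription determined by the CW-structure.

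Third, I would check the two required compatibilities from \cref{def:lf-restrictions}: naturality with respect to cellular equivariant maps of CW-pairs, and compatibility with the induction isomorphisms. Naturality is immediate from the construction because the cell-counting is functorial and geometric $\K$-cycles push forward by post-composition with $f$. For the induction compatibility, in the situation where $\Gamma$ acts freely on $Y$, $\overline{K}$ is a $\pi$-subcomplex with $\Gamma\backslash\Gamma\cdot\overline{K} = \pi\backslash\overline{K}$, the induction isomorphism $\KO_*^\Gamma(Y) \cong \KO_*(\Gamma\backslash Y)$ is realized on geometric cycles by quotienting out the free $\Gamma$-action; one then traces a cycle around the diagram and observes that restricting-then-quotienting and quotienting-then-restricting produce the same geometric $\K$-cycle on $\pi\backslash\overline{K}$ up to the evident identifications, using that the $\Gamma$-translates of $\overline{K}$ are disjoint except for $\pi$-translates.

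The main obstacle I expect is making the restriction step \emph{clean} — that is, carefully producing $M_K$ so that it is genuinely a $\pi$-compact manifold with boundary, with $f(\partial M_K) \subseteq Y \setminus K$, in a way that behaves well under bordism and vector bundle modification. This is a transversality/regular-neighborhood argument: one wants to replace the closed set $K$ and the (possibly wild) preimage $f^{-1}(K)$ by something with collared boundary inside $M$. The honest way is probably to enlarge $K$ slightly to an open $\pi$-cocompact neighborhood (available because of $\Gamma$-local $\pi$-precompactness combined with $\Gamma$-compactness of $M$), pull it back, and then use a $\pi$-invariant smoothing/collaring of the resulting submanifold-with-boundary of $M$; all of the verifications that the generating relations are respected then reduce to the same kind of collaring argument applied one dimension up. I would present this carefully but keep the differential-topological details to the minimum needed to see that $\pi$-compactness and the boundary condition are preserved.
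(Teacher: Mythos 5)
Your proposal is correct and follows essentially the same route as the paper: restrict a geometric cycle $(M,s,E,f)$ to a $\pi$-compact codimension-zero $\pi$-submanifold with boundary containing $f^{-1}(K)$ in its interior, check the generating relations by applying the same cutoff to bordisms and modifications, and verify induction compatibility on the level of cycles. The paper implements the cutoff you flag as the main obstacle by taking the ($\pi$-equivariantly smoothed) distance function to $\overline{f^{-1}(K)}$ with respect to a $\Gamma$-invariant metric and cutting at a regular value, which is a concrete version of your neighborhood-pullback-and-collaring strategy.
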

\begin{proof}
	Let $\pi,\Gamma,Y,K$ be as in \cref{def:lf-restrictions} and let
	$[M,s,E,f]\in\KO_*^\Gamma(Y)$.

	To define the lf-restriction map we have to find any $\pi$-invariant $\pi$-compact submanifold with boundary $M'\subset M$ of codimension $0$, such that $f(M\setminus M')\subset \ccomp{Y}{K}$. The restricted K-homology cycle $(M',s_{|M'},E_{|M'},f_{M'})$ with an action of only $\pi$ then represents an element in $\KO_*^\pi(Y,\ccomp{Y}{K})$ that does not depend on the concrete choice of $M'$. The construction is compatible with taking disjoint unions, with vector bundle addition and with vector bundle modification. Because the method given below to find $M'$ can also be applied to bordisms, the construction is compatible with the bordism relation and therefore defines a group homomorphism from $\KO_*^\Gamma(Y)$ to $\KO_*^\pi(Y,\ccomp{Y}{K})$.


	To construct the submanifold $M'\subset M$, we first check that $L:=\overline{f^{-1}(K)}$ is $\pi$-compact. Indeed, if $F\subset M$ is the closure of a $\Gamma$-fundamental domain, then $F$ is compact. Hence, $\overline{\gamma}\cdot F\cap f^{-1}(K)\neq\emptyset$ only for finitely many cosets $\overline{\gamma}\in\overline{\Gamma}_F\subset\Gamma/\pi$, and it follows that $f^{-1}(K)\subset\bigcup_{\overline{\gamma}\in\overline{\Gamma}_F}\overline{\gamma}\cdot F$ is $\pi$-precompact.
	We choose a $\Gamma$-invariant Riemannian metric on $M$ and consider the function $d\colon M\to\R$ that assigns to $m\in M$ the distance from $m$ to $L$. This function is $\pi$-equivariant. The induced function $\overline{d}\colon \pi\backslash M\to\R$ is the distance function (in the metric induced from $M$) to the compact set $\pi\backslash L$. Hence $\overline{d}$ is proper.

	Next, we need a $\pi$-invariant smooth approximation of $d$. Because the action $\Gamma\curvearrowright M$ is proper, every point $m\in M$ has a $\Gamma$-invariant neighborhood of the form $\bigsqcup_{[\gamma]\in\Gamma/\Gamma_m} W_{m,[\gamma]}$, where $\Gamma_m$ is the stabilizer of $m$. Because $M$ is $\Gamma$-compact, it is covered by a finite set of such neighborhoods. Because all stabilizer groups are finite, one can construct a $\Gamma$-invariant smooth partition of unity subordinate to the covering. Using the partition of unity we can now smoothen $d$ separately on each neighborhood. This works by choosing a smoothing on one $W_{m,\overline{\gamma}}$ for each coset $\overline{\gamma}\in\pi\backslash\Gamma/\Gamma_m$, averaging these over the action of $\pi_m$ and extending them $\pi$-equivariantly to the whole $\Gamma$-neighborhood.

	Finally, pick any regular value $r>0$ of $d$ such that $f(d^{-1}([r,\infty)))\subset \ccomp{Y}{K}$. The preimage $M'=d^{-1}((-\infty,r])\subset M$ is a $\pi$-invariant submanifold with boundary. Because $\overline{d}$ is still a proper function after smooth approximation, $M'$ is $\pi$-compact.

	The lf-restriction map, thus constructed, is clearly natural with respect to maps of CW-pairs. To see that it is compatible with induction, assume now that
	$\Gamma$ acts freely on $Y$ and that $\overline{K}$ is a subcomplex with $\Gamma\backslash\Gamma\cdot\overline{K}=\pi\backslash\overline{K}$, as in \cref{def:lf-restrictions}.
	We can construct a $\pi$-invariant neighborhood $U\supset\overline{K}$ such that $\Gamma\backslash\Gamma\cdot U=\pi\backslash U$ and such that there is a $\pi$-equivariant deformation retraction $H_t$ from $U$ to $\overline{K}$.
	Then $M\setminus f^{-1}(U)$ has a positive distance to $L$, and so we may arrange that $f(M')\subset U$.
	The image $\mathrm{ind}\circ\mathrm{exc}\circ r_K([M,s,E,f])$ is
        represented by the quotient of $(M',s_{|M'},E_{|M'},H_1\circ f_{M'})$
        by $\pi$. But the same representative can be obtained by first taking
        the quotient by $\Gamma$, then cutting $\Gamma\backslash M$ off to
        $\Gamma\backslash \Gamma\cdot M'=\pi\backslash M'$ and applying the deformation retraction induced by $H_t$ on the quotient.
	Therefore, one sees on the level of cycles that the diagram of \cref{def:lf-restrictions} is commutative.
\end{proof}

\begin{remark}\label{rem:cobordism-has-lf}
The above proof also works for complex K-homology and for all bordism theories.
Moreover, Jakob showed in \cite{Jakob} that all multiplicative non-equivariant generalized homology theories have a bordism-type description similar to that of K-homology. Thus, it seems likely that all these homology theories admit an equivariant extension that has lf-restrictions.
\end{remark}

\section{Applying the general transfer construction}
\label{sec:applications}

We apply \cref{thm:transfer-extension} to specific scenarios where we put certain restrictions on the homology theory $E$, on the codimension of the subspace $N\subset M$ or on the normal bundle of $N$.

In each case we will describe the cohomology class $\theta$ in terms of a pullback of a fixed cohomology class $x\in E^*(\mathrm{K}(\Z,n))$ along some classifying map. The extensions of $\theta$ are then obtained as the pullback of $x$ under extensions of the classifying map.
This approach has the additional benefit that it is natural with respect to transformations of homology theories.

\subsection{Transfer in singular homology} \label{subsec:application-singular-homology}

The following is a restatement of \cref{theo:codim_homol}.

\begin{theorem}\label{thm:application-singular-homology}
Suppose that we are in \cref{setup:geom} with $N \subset M$ an embedding of codimension $k$ with normal bundle $\nu$.
Let $E_*=\HZ_*(\blank;A)$ be singular homology with coefficients in a commutative ring $A$ and let $\nu$ be oriented with Thom class $\theta\in \HZ^k(\disk\nu,\sphere\nu; A)$.

  In addition, we assume that
  \begin{enumerate}
  \item if $k>1$, then $\pi_k(N)\to \pi_k(M)$ is surjective; \label{item:HomotopySurj}
  \item if \(k>2\), then $\pi_j(M)=0$ for $j=2,\dots,k-1$; \label{item:HomotopyZero}
  \item there exists a subset \(S \subseteq \pi_k(N)\) which generates \(\pi_k(M)\) such that the composition of the Hurewicz homomorphism $\pi_k(N)\to \HZ_k(N;\Z)$ with the Euler class of $\nu$ vanishes on \(S\).
In particular, this is satisfied if the normal bundle is trivial or \(\pi_k(M)=0\) or \(\dim N < \frac{\dim M}{2}\). \label{item:EulerClassCondition}
  \end{enumerate}

Then the classical transfer map can be extended to the classifying space for free actions.
\end{theorem}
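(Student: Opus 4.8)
The goal is to verify condition \labelcref{item:lift-to-Efree} of \cref{thm:transfer-extension} under the stated hypotheses, since that condition already suffices to extend $\tau_\theta$ to the classifying space for free actions. Working with singular homology, which has lf-restrictions by \cref{lem:sing-homology-has-lf}, I would describe the Thom class $\theta$ homotopy-theoretically: the oriented bundle $\nu$ is classified by a map $N \to B\mathrm{SO}(k)$, and the Thom class lives in $\HZ^k(\disk\nu, \sphere\nu; A) = \HZ^k(\mathrm{Th}(\nu); A)$, pulled back from the universal Thom class $u \in \HZ^k(\mathrm{MSO}(k); A)$, or equivalently—after passing through the Hurewicz/Postnikov machinery—from a class $x \in \HZ^k(\mathrm{K}(\Z, k); A)$ via a map $\mathrm{Th}(\nu) \to \mathrm{K}(\Z, k)$ representing the Thom class. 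The point of the \enquote{pullback of a fixed class} strategy announced at the start of \cref{sec:applications} is that extending $\theta$ to $\Efree\Gamma$ reduces to extending this classifying map along the inclusion $\ucov{\disk\nu} \hookrightarrow \ucov{M} \to \Efree\Gamma$, i.e. to an obstruction-theory problem.

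**Key steps.** First, set up the relevant map of pairs: choose CW models so that $\ucov{\disk\nu}$ sits inside $\ucov{M}$, note that $\ucov{\disk\nu}$ is $\Gamma$-locally $\pi$-precompact (it is $\pi$-compact), and take $K = \ucov{\disk\nu}$; the classifying map $\ucov{M} \to \Efree\Gamma$ can be made cellular, and after enlarging $K_\Efree$ appropriately it becomes a map of $\pi$-CW-pairs $(\ucov{M}, \ucov{M}\setminus K) \to (\Efree\Gamma, \Efree\Gamma \setminus K_\Efree)$—here one uses that $\Efree\Gamma$ can be built from $\ucov{M}$ by attaching cells, as in the codimension-one argument. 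Second, reinterpret the lifting requirement: since $E^k_\pi(\ucov{M}, \ucov{M}\setminus K) \cong E^k_\pi(\ucov{\disk\nu}, \ucov{\sphere\nu}) \cong \HZ^k(\disk\nu, \sphere\nu; A)$ by excision and induction (as $\Gamma$ acts freely on $\Efree\Gamma$ and $K$ meets its translates only within $\pi$), lifting $\theta$ to $\theta_\Efree \in E^k_\pi(\Efree\Gamma, \Efree\Gamma\setminus K_\Efree)$ amounts to finding a class on the larger pair restricting to the Thom class. Third—the crux—carry out the obstruction-theoretic argument: one wants to extend the map $\disk\nu \simeq N \xrightarrow{\theta} \mathrm{K}(\Z, k)$ (viewed through the Thom space) over a space built from $M$ by killing higher homotopy, i.e. over (a piece of) $\Bfree\Gamma$. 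Hypotheses \labelcref{item:HomotopySurj} and \labelcref{item:HomotopyZero} guarantee that, after attaching cells to $M$ to form $\Bfree\Gamma$, the only cells attached in dimensions $\le k$ are $(k{+}1)$-cells (killing $\pi_k$) whose attaching spheres come from $\pi_k(N)$, and there are no cells at all in dimensions $2, \dots, k-1$; so the sole obstruction to extending the Thom class over these cells is its pairing with the attaching classes in $S$. Hypothesis \labelcref{item:EulerClassCondition} says precisely that this pairing—the evaluation of the Euler class of $\nu$ (which is the image of the Thom class under restriction to the zero section, i.e. the relevant obstruction cocycle) on the Hurewicz images of elements of $S$—vanishes, so the extension exists. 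For cells in dimension $> k$ the obstruction groups $\HZ^{j}(-; \pi_{j-1}\mathrm{K}(\Z,k)) = 0$ vanish automatically, so no further conditions are needed. Finally, package the resulting extended Thom class as $\theta_\Efree$ and invoke \cref{thm:transfer-extension}\labelcref{item:lift-to-Efree} to conclude.

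**Main obstacle.** The delicate point is the third step: making precise that the \enquote{extend the Thom class over the cells of $\Bfree\Gamma$} problem has exactly one genuine obstruction, living in the appropriate (Bredon, but with trivial coefficients so essentially ordinary) cohomology group, and that this obstruction is *computed* by the composite \enquote{Hurewicz followed by evaluation against the Euler class} appearing in \labelcref{item:EulerClassCondition}. Identifying the primary obstruction with the Euler-class pairing requires care: one must track how the Thom class over the disk bundle restricts, via the zero section, to the Euler class, and then see that extending over a $(k{+}1)$-cell attached along $\sigma \in \pi_k(N)$ is unobstructed iff $\langle e(\nu), h(\sigma)\rangle = 0$. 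I would also need to handle the equivariant bookkeeping—ensuring all the attaching is done $\Gamma$-equivariantly on $\ucov{M}$ and that the precompactness condition survives (which is where \cref{def:locally-finite} and the construction of $\Bfree\Gamma$ from $M$ by finitely/locally-finitely many cells enters). The remaining verifications—naturality, that the construction is independent of choices, and that for $k = 1$ conditions \labelcref{item:HomotopySurj}--\labelcref{item:HomotopyZero} are vacuous so only \labelcref{item:EulerClassCondition} (automatic, as $\HZ^1$ of the attaching $1$-spheres vanishes or rather the Euler class of a line bundle argument) is needed—should be routine. The counterexample \cref{ex:EulerClassCondition} promised in the introduction confirms that \labelcref{item:EulerClassCondition} cannot be dropped, which is consistent with it being exactly the primary obstruction.
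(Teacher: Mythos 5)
Your proposal is correct and follows essentially the same route as the paper: both verify condition \labelcref{item:lift-to-Efree} of \cref{thm:transfer-extension} by building $\Efree\Gamma$ from $\ucov{M}$ through attaching free $\Gamma$-cells (which hypotheses \labelcref{item:HomotopySurj} and \labelcref{item:HomotopyZero} force to have dimension $>k$, with the $(k{+}1)$-cells attachable along elements of $S$), tracking $\Gamma$-local $\pi$-precompactness of the enlarged $K$, and observing that the only obstruction to extending $\theta$ cell by cell is the Euler-class pairing of condition \labelcref{item:EulerClassCondition}, with cells of dimension $>k+1$ unobstructed by the dimension axiom. The paper's proof is just a more compressed version of your argument, stated directly in terms of extending the cohomology class over cells rather than through the $\mathrm{K}(\Z,k)$-classifying-map language.
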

\begin{proof}
We have to extend $\theta$ from $\HZ^k_\pi(\ucov{\disk\nu},\ucov{\sphere\nu})$ to $\HZ^k_\pi(\Efree{\Gamma},\ccomp{\Efree{\Gamma}}{K})$ for a suitable $\Gamma$-locally $\pi$-precompact subspace $K \subseteq \Efree \Gamma$. Then the result follows from \cref{thm:transfer-extension}.

We start with $K=\ucov{\disk\nu}$ and build $\Efree{\Gamma}$ out of $\ucov{M}$ by inductively attaching free $\Gamma$-cells of dimension $>k$.
Each new $\Gamma$-cell decomposes into $\pi$-cells. Because $K$ is $\Gamma$-locally $\pi$-precompact and the attaching maps are $\Gamma$-equivariant, all but finitely many of the $\pi$-cells do not attach to $K$.
The interior of the other $\pi$-cells has to be added to $K$, but since these are only finitely many, $K$ stays $\Gamma$-locally $\pi$-precompact.

To complete the inductive step, $\theta$ has to be extended to the added cells.
If the new $\pi$-cells have dimension $>k+1$, this extension is possible because singular homology satisfies the dimension axiom. In dimension $k+1$ we may assume that the attaching map represents an element of $S\subset\pi_k(N)$. The extension is then possible by the condition on the Euler class.
\end{proof}

\begin{remark}
For singular homology our extension of the transfer map coincides with the map
constructed by Engel in the setting of rough homology in \cite{Engel}:
Without recapitulating all the definitions, we note that, conceptually, Engel's map is given by cap product with a certain cohomology class that arises from the Thom class (see \cite{Engel}*{Proof of Theorem 2.16}), and the same is true for our construction.

To verify that the two maps are the same one has to retrace the proof of \cite{Engel}*{Theorem 3.10}. Without loss of generality, $\Efree\pi_1(M)$ is also used as a model for $\Efree\pi_1(N)$, and $\Bfree\pi_1(M)$ contains $M$ as a subcomplex.
To evaluate Engel's map on a homology class $x\in\HZ_*(\Bfree\pi_1(M))$ we first represent $x$ by a cycle, such that the vertices of all singular simplices map to a fixed base point in $M$. Because $M$ is highly connected, we can further assume that the $q$-faces of every simplex are mapped to $M$.
Now we trace through all the isomorphisms involved in Engel's construction.
Whenever $\pi_1(M)$ is embedded into $\Efree\pi_1(M)$ as the orbit of a point, we let this point be a fixed lift of the base point. Whenever the simplicial operator $\Delta$ of \cite{Engel}*{Proof of Proposition 2.8} is applied to the vertex set $\overline{x}$ of a singular simplex, we may assume that $\Delta(\overline{x})$ is the original simplex. In the end, the image of $x$ agrees with the image under our own extended transfer map.
\end{remark}

Although the hypotheses of \cref{theo:codim_homol} appear elaborate, the following examples demonstrate why they are necessary.

\begin{example}
  Let \(N\) be an oriented aspherical manifold and \(k > 1\).
  Then the codimension \(k\) submanifold inclusion \(N \cong N \times \ast \subset N \times \sphere^k \eqqcolon M\) with \(\pi = \pi_1 N = \pi_1 M = \Gamma\) satisfies all conditions of \cref{theo:codim_homol} except \labelcref{item:HomotopySurj}.
  However, the desired commutative diagram cannot exist
  because the fundamental class of \(N\) does not vanish in the homology of \(\Bfree \pi = N\) but the fundamental class of \(N \times \sphere^k\) does vanish in \(\HZ_\ast(\Bfree \pi)\).
\end{example}

\begin{example} \label{ex:HomotopyZero}
  To see why \labelcref{item:HomotopyZero} is necessary, consider the standard embedding \(N \coloneqq \sphere^2 = \CP^1 \subset \CP^3 \eqqcolon M\).
  Then \(N\) has codimension four in \(M\), \(\pi_1 N = \pi_1 M = 1\) and the normal bundle is orientable because \(\CP^1\) and \(\CP^3\) are.
  Since \(\pi_4( \CP^3) = 0\), condition \labelcref{item:HomotopySurj} is satisfied.
  So is \labelcref{item:EulerClassCondition} due to \(\HZ^4(\CP^1) = 0\).
  However, \labelcref{item:HomotopyZero} fails because \(\pi_2(\CP^3) \cong \Z \neq 0\).
    Since \(\CP^2\) and \(\CP^1\) have intersection number \(1\) in \(\CP^3\) and \(\HZ_4(\CP^3)\) is generated by \(\CP^2\), in degree \(4\) the transfer map of the inclusion \(\CP^1 \subset \CP^3\) yields an isomorphism \(\HZ_4(\CP^3) \xrightarrow{\cong} \HZ_0(\CP^1) \cong \HZ_0(\ast) \cong \Z\).
    But \(\HZ_4\) of the trivial group is zero.
    Hence the diagram
  \[ \begin{tikzcd}
      \HZ_4(\CP^3) \rar \dar["\cong"] & \HZ_4(\ast) = 0 \dar[dashed]\\
      \HZ_{0}(\CP^1) \rar["\cong"] & \HZ_{0}(\ast) \cong \Z
    \end{tikzcd} \]
    cannot be completed with a dashed arrow to make it commutative.
\end{example}

A priori, a plausible alternative to condition \labelcref{item:HomotopyZero} might be to require that the induced maps \(\pi_j(N) \to \pi_j(M)\) are surjective (or isomorphisms) for \(j = {2, \dotsc, k-1}\).
For instance, this is suggested by the case of fiber bundles over aspherical manifolds which have been studied in \cite{ZeidlerSubmf}*{Theorem~1.5}.
But \cref{ex:HomotopyZero} would satisfy such a hypothesis and hence this is not appropriate for our purposes.

\begin{example} \label{ex:EulerClassCondition}
  A variation of \cref{ex:HomotopyZero} shows that condition \labelcref{item:EulerClassCondition} cannot be dropped either.
  Indeed, consider the standard embedding \(N \coloneqq \sphere^2 = \CP^1 \subset \CP^2 \eqqcolon M\).
  Then \(N\) has codimension two in \(M\),  \(\pi_1 N = \pi_1 M = 1\) and the Hurewicz theorem implies that the inclusion \(N \hookrightarrow M\) induces an isomorphism on \(\pi_2\) because it does so on \(\HZ_2\).
  Now this example satisfies all hypotheses of \cref{theo:codim_homol} except \labelcref{item:EulerClassCondition}.
  Since \(\CP^1\) has self-intersection number \(1\) in \(\CP^2\) and \(\HZ_2(\CP^2)\) is generated by \(\CP^1\), in degree \(2\) the transfer map of the inclusion \(\CP^1 \subset \CP^2\) yields again an isomorphism \(\HZ_2(\CP^2) \xrightarrow{\cong} \HZ_0(\CP^1) \cong \HZ_0(\ast) \cong \Z\).
  Since \(\HZ_2(\ast) = 0\), this again obstructs the existence of the desired commutative diagram.
\end{example}

\begin{corollary}\label{cor:application-singular-homology}
  Assume that we are in the situation of \cref{thm:application-singular-homology} \parensup{with \(A=\Z\)}, that the assembly map \(\mu \otimes \Q \colon \KO_\ast(\Bfree \Gamma) \otimes \Q \to \KO_\ast(\Cstar \Gamma) \otimes \Q\) is injective, and that $M$ is spin. Moreover, assume that either
\begin{enumerate}\setlength\parskip{0pt}\setlength\itemsep{0pt}
\item\label{item:Essentialness} the push forward of the homological fundamental class of \(N\) does not vanish in \(\HZ_\ast(\Bfree \pi; \Q)\), or
\item\label{item:HigherAHat} the normal bundle of $N\subset M$ is trivial and the push forward of the \textup{KO}-fundamental class of \(N\) \parensup{with the induced spin structure} in \(\KO_\ast(\Bfree \pi) \otimes \Q\) does not vanish.
\end{enumerate}
Then $\alpha(M)\neq 0$ and hence \(M\) does not admit a metric of positive scalar curvature.
\end{corollary}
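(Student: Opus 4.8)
The strategy is to show that the Rosenberg index $\alpha(M) \in \KO_m(\Cstar\Gamma)$ is non-zero; the non-existence of a metric of positive scalar curvature on $M$ then follows from the Rosenberg index obstruction. By \labelcref{eq:KO_sequence}, $\alpha(M)$ is the image of the spin bordism fundamental class $[M] \in \SpinBordism_m(M)$ under the composition
\[
  \SpinBordism_m(M) \to \KO_m(M) \xrightarrow{(c_\Gamma)_\ast} \KO_m(\Bfree\Gamma) \to \KO^\Gamma_m(\Eub\Gamma) \xrightarrow{\mu} \KO_m(\Cstar\Gamma),
\]
and the last two maps compose to the rational assembly map whose injectivity is hypothesized. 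Hence it suffices to show that the image of $[M]$ in $\KO_m(\Bfree\Gamma) \otimes \Q$ is non-zero.

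I would compute this image through the rational Pontryagin character. The natural isomorphism $\KO_\ast(X) \otimes \Q \cong \bigoplus_{j} \HZ_{\ast - 4j}(X; \Q)$ sends, for a closed spin manifold $X$, the image of its fundamental class $[X] \in \SpinBordism_{\ast}(X)$ to $\hat{\mathrm{A}}(X) \cap [X]$, whose component in $\HZ_{\ast - 4j}(X;\Q)$ is $\hat{\mathrm{A}}_j(X) \cap [X]$, where $\hat{\mathrm{A}}_j \in \HZ^{4j}(X;\Q)$ is the degree-$4j$ part of the total $\hat{\mathrm{A}}$-class (rationally this depends only on $TX$ and the orientation, not on the chosen spin structure). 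By naturality of this splitting, the image of $[M]$ in $\KO_m(\Bfree\Gamma)\otimes\Q$ corresponds to the family $\bigl((c_\Gamma)_\ast(\hat{\mathrm{A}}_j(M)\cap[M])\bigr)_{j}$, so it is enough to exhibit one index $j$ with $(c_\Gamma)_\ast(\hat{\mathrm{A}}_j(M)\cap[M]) \neq 0$ in $\HZ_{m-4j}(\Bfree\Gamma;\Q)$. Likewise, the image of $[N]_{\KO}$ under $(c_\pi)_\ast$ corresponds to $\bigl((c_\pi)_\ast(\hat{\mathrm{A}}_j(N)\cap[N])\bigr)_{j}$, where $c_\pi\colon N\to\Bfree\pi$ is the relevant classifying map and $N$ carries the orientation induced by the spin structure of $M$ and the orientation of $\nu$.

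Now I would invoke \cref{thm:application-singular-homology} with $A=\Q$: it provides the commutative square with horizontal maps $(c_\Gamma)_\ast,(c_\pi)_\ast$ and vertical maps the classical transfer $\tau_{M,N}$ and its extension $\tau_{\Gamma,\pi}$. By the projection formula for the classical transfer---equivalently, the Poincar\'e--Lefschetz duality identity for the Gysin map of $i\colon N\hookrightarrow M$---one has $\tau_{M,N}(u\cap[M]) = (i^\ast u)\cap[N]$ for all $u\in\HZ^\ast(M;\Q)$; in particular $\tau_{M,N}[M]=[N]$. In case~\labelcref{item:Essentialness}, apply this with $u=1$ and $j=0$: commutativity gives $\tau_{\Gamma,\pi}\bigl((c_\Gamma)_\ast[M]\bigr) = (c_\pi)_\ast[N] \neq 0$, so $(c_\Gamma)_\ast[M]\neq0$, which is the $j=0$ component---done. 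In case~\labelcref{item:HigherAHat}, triviality of $\nu$ yields $i^\ast\hat{\mathrm{A}}_j(M) = \hat{\mathrm{A}}_j(TN\oplus\R^k) = \hat{\mathrm{A}}_j(N)$ for all $j$; the hypothesis says $(c_\pi)_\ast([N]_{\KO})\neq0$ in $\KO_{m-k}(\Bfree\pi)\otimes\Q$, so by the previous paragraph $(c_\pi)_\ast(\hat{\mathrm{A}}_{j_0}(N)\cap[N])\neq0$ for some $j_0$. Then $\tau_{M,N}(\hat{\mathrm{A}}_{j_0}(M)\cap[M]) = (i^\ast\hat{\mathrm{A}}_{j_0}(M))\cap[N] = \hat{\mathrm{A}}_{j_0}(N)\cap[N]$, and commutativity gives $\tau_{\Gamma,\pi}\bigl((c_\Gamma)_\ast(\hat{\mathrm{A}}_{j_0}(M)\cap[M])\bigr) = (c_\pi)_\ast(\hat{\mathrm{A}}_{j_0}(N)\cap[N]) \neq 0$, so $(c_\Gamma)_\ast(\hat{\mathrm{A}}_{j_0}(M)\cap[M])\neq0$---done.

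The only steps that need care are standard and not the substance of the argument: the identification of the image of the spin fundamental class under $\SpinBordism_\ast \to \KO_\ast\otimes\Q$ with the $\hat{\mathrm{A}}$-class capped with the fundamental class; the naturality of the Pontryagin character splitting; and the projection formula for the classical transfer. There is no genuine obstacle---the whole content is the observation that the hypotheses on $N$ (non-triviality of its homological fundamental class in case~\labelcref{item:Essentialness}, or of a single higher-$\hat{\mathrm{A}}$ component in case~\labelcref{item:HigherAHat}) become, component by component, a non-vanishing that the singular-homology transfer square of \cref{thm:application-singular-homology} transports back to a class on $\Bfree\Gamma$ detecting $\alpha(M)$.
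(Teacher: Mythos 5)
Your proposal is correct and follows essentially the same route as the paper: rationalize via the Pontryagin character, use \cref{thm:application-singular-homology} to transport the non-vanishing from \(\Bfree\pi\) back to \(\Bfree\Gamma\), taking only the top-degree component in case~(1) and using that the Pontryagin character commutes with the transfer when \(\nu\) is trivial (your componentwise projection-formula computation with \(i^\ast\hat{\mathrm{A}}(M)=\hat{\mathrm{A}}(N)\)) in case~(2). The paper phrases this as the observation that the Chern character defect in diagram \labelcref{eq:ChernCharacterDefect} vanishes in top degree (resp.\ entirely for trivial normal bundle), but the content is identical.
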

\begin{proof}
On complex \(\K\)-homology the Chern character is a transformation
\[\ch \colon \K_n(X) \to \bigoplus_{l \in \Z} \HZ_{n+2l}(X; \Q)\]
which becomes an isomorphism after taking the tensor product with \(\Q\).
Precomposing the Chern character with the complexification transformation on real \(\K\)-homology yields the Pontryagin character
\[
  \ph \colon \KO_n(X) \to \bigoplus_{l \in \Z} \HZ_{n+4l}(X; \Q)
\]
which again becomes an isomorphism after taking the tensor product with \(\Q\).

The composition $\Omega^{\spin}_n(X)\to \KO_n(X)\xrightarrow{\ph}
\bigoplus_{l \in \Z}\HZ_{n+4l}(X; \rationals)$ agrees in degree \(n\) with the
composition $\Omega_n^{\spin}(X)\to
\Omega_n^{\mathrm{SO}}(X)\to \HZ_n(X;\integers)\to \HZ_n(X;\rationals)$ which assigns to
  an oriented manifold its homological fundamental class.
  However, in general they are not equal because the former contains information determined by the \(\hat{\mathrm{A}}\)-class of the tangent bundle in the lower degree summands.
  Similarly, if \(N \subset M\) is a codimension \(k\) submanifold embedding with \(\KO\)-oriented normal bundle,
  then the diagram
  \begin{equation}
    \begin{tikzcd}
      \KO_n(M) \rar["\ph"] \dar["\tau_{M,N}"] & \bigoplus_{l \in \Z} \HZ_{n+4l}(M; \Q) \dar["\tau_{M,N}"] \\
      \KO_{n-k}(N) \rar["\ph"] & \bigoplus_{l \in \Z} \HZ_{n-k+4l}(N; \Q)
      \end{tikzcd}
      \label{eq:ChernCharacterDefect}
  \end{equation}
  does not commute in general because of the Chern character defect.
  But if \(M\) is spin, then the classes \(\tau_{M,N} \ph ([M]_\KO)\) and \(\ph \tau_{M,N}  ([M]_\KO)\) agree in the top degree because they are both equal to the homological fundamental class of \(N\).
  This observation together with \cref{thm:application-singular-homology} proves \labelcref{item:Essentialness}.

  Finally, in the special case of a trivial normal bundle, the diagram \labelcref{eq:ChernCharacterDefect} commutes.
  Together with the fact that the Pontryagin character is a rational isomorphism and \cref{thm:application-singular-homology}, this proves \labelcref{item:HigherAHat}.
\end{proof}

\subsection{Transfer in a complex oriented homology theory}
  In this section, we apply our construction in codimension two and three to complex oriented cohomology theories.
  A classic textbook treatment of this concept is \cite{Adams}*{Part II}.
  Alternatively, see for instance~\cite{LurieChromatic}*{Lectures 4–6}.

    \begin{definition}
    Let $E$ be a generalized multiplicative cohomology theory. A \emph{complex
      orientation} for $E$ is a class $x\in E^2(\CP^\infty,\pt)$ which
    restricts to the twice-suspended unit in $E^2(\CP^1,\pt)=E^2(\sphere^2,\pt)$.
    $E$ is called \emph{complex orientable} if such an element exists.
  \end{definition}

  \begin{remark}[{\cite{Adams}*{Part II, Lemma~4.6}, \cite{LurieChromatic}*{Lecture 6, Theorem 8}}]
  Complex bordism is the universal example of a complex oriented cohomology theory.
  Indeed, complex orientations on a multiplicative cohomology theory \(E\) are in bijection with multiplicative transformations of cohomology theories \(f \colon \mathrm{MU} \to E\).
  There is a universal complex orientation \(x^{\mathrm{MU}} \in \mathrm{MU}^2(\CP^\infty, \pt)\) such that this bijection is implemented by taking \(f\) to \(f(x^{\mathrm{MU}})\).
\end{remark}

  \begin{example}
    The transformations $\mathrm{MU} \to \mathrm{MSpin}^{\mathrm{c}} \to \mathrm{MSO}$ show that the \(\mathrm{spin}^{\mathrm{c}}\)- and oriented bordism theories are complex oriented.
    The Atiyah–Bott–Shapiro orientation $\mathrm{MSpin}^{\mathrm{c}} \to \K$ makes complex K-theory complex oriented.

    On the other hand, $\spin$-bordism and real K-theory are both \emph{not} complex oriented.
    But they become complex oriented after inverting \(2\).
    For spin bordism, this is the case because the map \(\mathrm{MSpin} \to \mathrm{MSO}\) is an equivalence after inverting \(2\) and \(\mathrm{MSO}\) is complex oriented.
    Then it also follows for \(\KO\) via the Atiyah–Bott–Shapiro orientation $\mathrm{MSpin} \to \KO$.
  \end{example}

An ordinary orientation on a two-dimensional vector bundle $\nu$ over $N$ determines a complex structure on the bundle. Consequently, there is a homotopy class of classifying maps $f\colon (\disk\nu,\sphere\nu)\to(\disk\gamma,\sphere\gamma)\simeq (\CP^\infty,\sphere^\infty) $, where $\gamma$ is the universal complex line bundle over $\CP^\infty$.
Using that \(\sphere^\infty\) is contractible, we can identify \(f\) with a map \(f\colon (\disk\nu,\sphere\nu)\to(\CP^\infty,\pt)\).
The condition that $x\in E^2(\CP^\infty,\pt)$ is a complex orientation implies that $f^*(x)\in E^2(\disk\nu,\sphere\nu)$ is a Thom class.
Because $\CP^\infty=K(\Z,2)$, the homotopy class $[f]$ corresponds to an element in $\HZ^2(\disk\nu,\sphere\nu)$. This is the Thom class in ordinary cohomology for the given choice of ordinary orientation.
The restriction $\HZ^2(\disk\nu,\sphere\nu)\to \HZ^2(\disk\nu)\to \HZ^2(N)$ maps $[f]$ to the (ordinary) Chern class of $\nu$.

\begin{theorem}\label{thm:application-oriented-codim-2}
  Let $E$ be a multiplicative equivariant homology theory furnished with a complex orientation $x \in E^2(\CP^\infty,\pt)$ and lf-restrictions.

Suppose that we are in \cref{setup:geom} with $N \subset M$ an embedding of codimension $k = 2$.
Let the normal bundle \(\nu\) be oriented with ordinary Thom class $[f]\in \HZ^2(\disk\nu,\sphere\nu)$, viewed as a homotopy class of maps $f\colon (\disk\nu,\sphere\nu)\to(\CP^\infty,\pt)$.
Define the corresponding \(E\)-Thom class by $\theta \coloneqq f^*(x) \in E^2(\disk \nu, \sphere \nu)$.

Assume that the induced map $\pi_2(N)\to\pi_2(M)$ is surjective and that there exists a subset \(S \subseteq \pi_2(N)\) which generates \(\pi_2(M)\) such that the composition of the Hurewicz homomorphism with the Chern class of $\nu$ vanishes on \(S\).

Then the classical transfer map in \(E\)-homology defined by \(\theta\) can be extended to the classifying space for free actions. The extension is natural with respect to transformations of homology theories \parensup{that preserve all relevant structure, including the complex orientation}.
\end{theorem}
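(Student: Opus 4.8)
The plan is to deduce the theorem from \cref{thm:transfer-extension}. Since only an extension to the classifying space for free actions is claimed, it suffices to verify condition~\labelcref{item:lift-to-Efree}: to produce a $\Gamma$-locally $\pi$-precompact subspace $K_\Efree\subseteq\Efree\Gamma$ and a class $\theta_\Efree\in E^2_\pi(\Efree\Gamma,\ccomp{\Efree\Gamma}{K_\Efree})$ which pulls back along the classifying map $\ucov M\to\Efree\Gamma$ to (the restriction of) $\theta$. The key idea is \emph{not} to extend the $E$-cohomology class $\theta$ cell by cell---for a general complex oriented $E$ the obstructions live in $E^3$ of spheres of arbitrarily large dimension and need not vanish---but to extend the classifying \emph{map}. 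Indeed $\theta=f^*(x)$, where $f$ is a representative of the ordinary Thom class $[f]\in\HZ^2(\disk\nu,\sphere\nu)$, i.e.\ a homotopy class of maps of pairs $(\disk\nu,\sphere\nu)\to(\CP^\infty,\pt)=(K(\Z,2),\pt)$ whose restriction to $N$ pulls $x$ back to $c_1(\nu)$; and extending such a map of CW-pairs into $K(\Z,2)$ is precisely the obstruction problem for a class in $\HZ^2(-,-)$, which is exactly what was solved in \cref{thm:application-singular-homology}.

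Concretely I would first pass to the $\pi$-free picture, where the equivariant groups $E^2_\pi$ of the pairs in question are the nonequivariant $E^2$ of their $\pi$-quotients, so that $\theta$ corresponds to $\bar f^*(x)$ for the induced map of pairs $\bar f\colon(\bar M,\ccomp{\bar M}{\disk\nu^\circ})\to(\CP^\infty,\pt)$ with $\bar M=\pi\backslash\ucov M$. Then, exactly as in the proof of \cref{thm:application-singular-homology}, build a model of $\Efree\Gamma$ out of $\ucov M$ (hence of $\Bfree\pi=\pi\backslash\Efree\Gamma$ out of $\bar M$) by attaching free $\Gamma$-cells of dimension $>2$, starting with $K=\ucov{\disk\nu}$: each new $\Gamma$-cell contributes only finitely many $\pi$-cells meeting $K$, so one enlarges $K$ by their interiors and keeps it $\Gamma$-locally $\pi$-precompact, and one must extend $\bar f$ over the finitely many new $\pi$-cells meeting $\bar K$ while leaving it constant ($=\pt$) elsewhere. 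Over a $\pi$-cell of dimension $n>3$ the obstruction lies in $\pi_{n-1}(\CP^\infty)$, which vanishes. The $3$-cells may be taken to kill $\pi_2(\bar M)=\pi_2(M)$ and---using that $\pi_2(N)\to\pi_2(M)$ is surjective and that $S$ generates $\pi_2(M)$---to factor through $N$ and represent elements of $S$; then the obstruction to extending $\bar f$ over a $3$-cell attached along $\varphi\colon\sphere^2\to N$ is $\varphi^*(\bar f|_N)^*(x)=\langle c_1(\nu),h(\varphi)\rangle\in\HZ^2(\sphere^2)\cong\Z$, which vanishes on $S$ by hypothesis. Setting $\theta_\Efree\coloneqq F^*(x)$ for the resulting extension $F$ and observing that $\theta_\Efree$ restricts correctly along the classifying map by naturality of pullback, \cref{thm:transfer-extension} produces the extension $E_*(\Bfree\Gamma)\to E_{*-k}(\Bfree\pi)$. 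Naturality in $E$ is then immediate: a transformation $\Phi$ preserving the complex orientation and the lf-restrictions sends $\theta=f^*x$ to $f^*\Phi(x)$ and $\theta_\Efree=F^*x$ to $F^*\Phi(x)$, and the construction in \cref{thm:transfer-extension} is natural in the homology theory.

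The hard part---or rather, the only step that needs checking rather than transcribing from \cref{thm:application-singular-homology}---is the dimension-$3$ obstruction computation: one must confirm that, after pushing the attaching $2$-sphere of a $3$-cell into $N$, the obstruction to extending $\bar f$ equals $\langle c_1(\nu),h(\varphi)\rangle$, which uses that $\bar f|_N$ classifies the complex line bundle $\nu$ together with $\pi_2(\CP^\infty)\cong\HZ^2(\sphere^2)$. The remaining points---moving between the equivariant and quotient pictures, and arranging the free $\Gamma$-CW-structure so that the $3$-cells relevant to the obstruction represent the chosen elements of $S$---are the same mild bookkeeping already carried out in \cref{thm:application-singular-homology}.
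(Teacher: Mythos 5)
Your proposal is correct and follows essentially the same route as the paper: the paper likewise extends the ordinary Thom class $[f]$ (equivalently, the classifying map into $\CP^\infty=K(\Z,2)$) cell by cell exactly as in the proof of \cref{thm:application-singular-homology} with $k=2$, where the only nontrivial obstruction occurs over the $3$-cells and is killed by the hypothesis on $S$ and the Chern class, and then sets $\theta_\Efree=f_\Efree^*(x)$ before invoking \cref{thm:transfer-extension}. Your explicit identification of the $3$-cell obstruction with $\langle c_1(\nu),h(\varphi)\rangle$ and the remark on naturality in $E$ just spell out what the paper leaves to the cited proof.
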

\begin{proof}
The same proof as for \cref{thm:application-singular-homology} with $k=2$ shows that we can extend $[f]\in \HZ^2(\disk\nu,\sphere\nu)\cong \HZ^2_\pi(\ucov{M},\ccomp{\ucov{M}}{\ucov{\disk\nu}})$ to $[f_\Efree]\in \HZ^2_\pi(\Efree{\Gamma},\ccomp{\Efree{\Gamma}}{K})$ for some \(\Gamma\)-locally \(\pi\)-precompact subspace \(K \subset \Efree \Gamma\).
Then we view $[f_\Efree]$ as a homotopy class of $\pi$-invariant maps of pairs $f_\Efree\colon (\Efree{\Gamma},\ccomp{\Efree{\Gamma}}{K})\to (\CP^\infty,\pt)$, and we set $\theta_\Efree={f_\Efree}^*(x)$. This is the extension of $\theta$ required to apply \cref{thm:transfer-extension}.
\end{proof}

Under more restrictive hypotheses, we also obtain a codimension three transfer for complex orientable cohomology theories.
\begin{theorem}\label{thm:application-oriented-codim-3}
  Let $E$ be a complex orientable multiplicative equivariant homology theory with lf-restrictions.

Suppose that we are in \cref{setup:geom} with $N \subset M$ an embedding of codimension $k = 3$.
Let the normal bundle \(\nu\) be trivialized.
Using the chosen trivialization, let \(\theta \in E^3(\disk \nu, \sphere \nu) \cong E^3(N \times \disk^3, N \times \sphere^2)\) be the pullback of the three times suspended unit in \(E^3(\disk^3, \sphere^2)\).

Assume that the induced map \(\pi_1(N) \to \pi_1(M)\) is injective, $\pi_2(N)= 0$, $\pi_2(M) = 0$, and that the induced map $\pi_3(N)\to\pi_3(M)$ is surjective.

Then the classical transfer map in \(E\)-homology defined by \(\theta\) can be extended to the classifying space for free actions. The extension is natural with respect to transformations of homology theories \parensup{that preserve all relevant structure, including the complex orientation}.
\end{theorem}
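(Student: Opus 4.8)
The plan is to produce the data required by condition~\labelcref{item:lift-to-Efree} of \cref{thm:transfer-extension} and then invoke that theorem. Since \(\nu\) is trivialized, excision identifies \(E^3_\pi(\ucov{M},\ccomp{\ucov{M}}{\ucov{\disk\nu}})\cong E^3_\pi(\ucov{N}\times\disk^3,\ucov{N}\times\sphere^2)\), and under this identification \(\theta\) is the pullback of the three‑times suspended unit along the projection, i.e.\ the suspended‑unit Thom class of the trivial \(\disk^3\)-bundle over \(\ucov{N}\). Accordingly I would build a model of \(\Efree\Gamma\) from \(\ucov{M}\) by attaching free \(\Gamma\)-cells of dimension \(\geq k+1=4\) — this suffices because \(\ucov{N}\) is the universal cover of \(N\) (as \(\pi_1(N)\xrightarrow{\ \sim\ }\pi\)), so \(\pi_1\ucov{M}=\pi_2\ucov{M}=0\) by hypothesis — while keeping track of a \(\Gamma\)-locally \(\pi\)-precompact subspace \(K\) starting from \(\ucov{\disk\nu}\). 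As in the proof of \cref{thm:application-singular-homology}, attaching a free \(\Gamma\)-cell enlarges \(K\) only by the interiors of the finitely many of its constituent \(\pi\)-cells that meet \(K\), so \(K\) remains \(\Gamma\)-locally \(\pi\)-precompact, and the task reduces to extending \(\theta\) over the newly attached \(\pi\)-cells.

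The crucial dimension is \(d=k+1=4\). These cells are attached along maps \(S^3\to\ucov{M}\) whose classes together generate \(\pi_3\ucov{M}=\pi_3(M)\); using the surjectivity \(\pi_3\ucov{N}=\pi_3(N)\twoheadrightarrow\pi_3(M)=\pi_3\ucov{M}\) I would choose these attaching maps to factor through the zero section \(\ucov{N}\subset\ucov{\disk\nu}\). Along such a sphere the obstruction to extending \(\theta\) is computed from the restriction of \(\theta\) to \(\ucov{N}\), which is the \(E\)-theoretic Euler class of \(\nu\); since \(\nu\) is trivial this vanishes, so \(\theta\) extends over all of the \(4\)-cells. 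For a cell of dimension \(d>4\) the obstruction lives in \(\widetilde E^3(S^{d-1})\cong E^{4-d}(\pt)\), and — because \(\theta\) is a suspended unit — it equals the image, under the unit homomorphism \(\pi^s_{d-4}\to E^{4-d}(\pt)\), of the framed bordism class of the transverse preimage of \(\ucov{N}\) under the attaching map; in particular it vanishes if that preimage can be taken empty (which can be arranged whenever the attaching map may be chosen to avoid the tube) or whenever \(E^{4-d}(\pt)\) is torsion‑free, so that the image of the finite group \(\pi^s_{d-4}\) is zero. This is where the hypotheses \(\pi_2(M)=\pi_2(N)=0\) and the complex orientability of \(E\) serve to keep the relevant homotopy and coefficient groups under control. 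Having extended \(\theta\) to \(\theta_\Efree\in E^3_\pi(\Efree\Gamma,\ccomp{\Efree\Gamma}{K})\), \cref{thm:transfer-extension} yields the extension \(E_*(\Bfree\Gamma)\to E_{*-3}(\Bfree\pi)\), and naturality with respect to transformations of homology theories is automatic since \(\theta\) and \(\theta_\Efree\) are suspended units.

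The main obstacle is controlling the extension over the cells of dimension \(>k+1\): unlike the codimension‑two case — where \(\CP^\infty=\mathrm K(\Z,2)\) forces the only obstruction into the single dimension \(k+1\), after which complex orientability converts the extended ordinary class into an \(E\)-class — in codimension three one must also dispose of the higher obstructions in \(E^{4-d}(\pt)\) for \(d\geq 5\). The cleanest route, parallel to \cref{rmk:transfer-compatible-with-codim-1} and the construction of \labelcref{eq:codim1Top} in the proof of \cref{theo:codim_1_long}, would be to carry out the construction so that \(\Efree\pi\) ends up inside \(\Efree\Gamma\) with a trivial \(\disk^3\)-bundle neighbourhood extending \(\ucov{\disk\nu}\subset\ucov{M}\): then \(\theta_\Efree\) is literally the suspended‑unit Thom class of that trivial bundle and no further obstructions arise. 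Implementing this requires building the two classifying spaces simultaneously while propagating, up the skeletal filtration, the surjectivity of \(\pi_n\) of the tube core onto \(\pi_n\) of the ambient space — a propagation whose base case is precisely the hypothesis \(\pi_3(N)\twoheadrightarrow\pi_3(M)\) together with the vanishing of \(\pi_2\), and whose upkeep is the delicate homotopy‑theoretic point of the argument.
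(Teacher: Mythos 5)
There is a genuine gap: neither branch of your proposal actually closes the higher obstructions, and the theorem's hypotheses do not let you close them in the way you suggest. For a cell of dimension $d\geq 5$ the obstruction you identify lives in $\widetilde E^3(S^{d-1})\cong E^{4-d}(\pt)$, and these groups are in general nonzero for a complex orientable theory (for complex K-theory they equal $\Z$ for $d$ even); neither \enquote{the transverse preimage can be taken empty} nor \enquote{$E^{4-d}(\pt)$ is torsion-free} follows from the hypotheses, so the direct obstruction-theoretic route stalls exactly where you say it does. Your fallback --- building $\Efree\pi\subset\Efree\Gamma$ with a trivial $\disk^3$-bundle neighbourhood --- is precisely the situation of \cref{rmk:transfer-compatible-with-codim-1}, which is generally not achievable in codimension $\geq 2$: propagating surjectivity of $\pi_n$ of the tube core up the skeleta would require $\pi_n(N)\to\pi_n(M)$ surjective for all $n$, which is not assumed. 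So the proposal does not complete the proof.

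The idea you are missing is to desuspend the Thom class once before extending. Since $\theta$ is the three-times suspended unit, it equals $\delta f^\ast(x)$, where $f\colon\sphere\nu\to\sphere^2=\CP^1\subset\CP^\infty$ is induced by the trivialization, $x\in E^2(\CP^\infty)$ is the complex orientation, and $\delta$ is the coboundary of the pair $(\disk\nu,\sphere\nu)$. One then extends not the $E$-cohomology class but the homotopy class of the map to $\CP^\infty=K(\Z,2)$, i.e.\ an ordinary degree-$2$ class. The paper first extends $[f]$ from $\sphere\nu$ over $\ccomp{\bar M}{\disk\nu^\circ}$: the hypotheses on $\pi_1,\pi_2,\pi_3$ give $\pi_i(\bar M,\disk\nu)=0$ for $i\le 3$, hence by relative Hurewicz, the universal coefficient theorem and excision the restriction $\HZ^2(\ccomp{\bar M}{\disk\nu^\circ})\to\HZ^2(\sphere\nu)$ is an isomorphism. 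Then $\Efree\Gamma$ is built by attaching free $\Gamma$-cells of dimension $\geq 4$, and extending a degree-$2$ ordinary class (equivalently, a map to $K(\Z,2)$) over such cells is entirely unobstructed --- this is where all of your higher obstructions disappear. Setting $\theta_\Efree=\delta f_\Efree^\ast(x)$ then feeds into \cref{thm:transfer-extension}. In particular, complex orientability is used to realize the twice-suspended unit in $E^2(\sphere^2)$ as the restriction of a class on $\CP^\infty$, not (as in your sketch) to control the coefficient groups $E^{4-d}(\pt)$.
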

\begin{proof}
Let \(f \colon \sphere\nu \to \CP^\infty\) the map obtained as the composition of the projection \(\sphere \nu \to \sphere^2\) defined by the trivialization and the inclusion \(\sphere^2 = \CP^1 \subset \CP^\infty\).
Since \(E\) is complex oriented, the twice suspended unit in \(E^2(\sphere^2)\) is the restriction of a class \(x \in E^2(\CP^\infty)\).
This implies that \(\theta = \delta f^\ast(x)\), where \(\delta \colon E^2(\sphere \nu) \to E^3(\disk \nu, \sphere \nu)\) is the coboundary map associated to the pair \((\disk \nu, \sphere \nu)\).

Now consider the lifted embedding $\disk\nu\subset\bar{M} = \pi \backslash \ucov{M}$.
From the long exact sequence of homotopy groups and our assumptions on $\pi_1,\pi_2$ and $\pi_3$ it follows that $\pi_i(\bar{M},\disk\nu)=0$ for $i\leq 3$.
The relative Hurewicz Theorem then implies that $\HZ_i(\bar{M},\disk\nu)=0$ for \(i \leq 3\).
Applying the universal coefficient theorem and excision yields $\HZ^i(\ccomp{\bar{M}}{\disk\nu^\circ},\sphere\nu)\cong \HZ^i(\bar{M},\disk\nu)=0$ for $i\leq 3$.
From the long exact sequence of cohomology it follows that the inclusion $\sphere\nu\subset\ccomp{\bar{M}}{\disk\nu^\circ}$ induces an isomorphism $\HZ^2(\ccomp{\bar{M}}{\disk \nu^\circ}) \xrightarrow{\cong} \HZ^2(\sphere \nu)$.
This proves that \(f \colon \sphere \nu \to \CP^\infty \) can be extended to a map \(f_{\bar{M}} \colon \ccomp{\bar{M}}{\disk\nu^\circ} \to \CP^\infty\).
Lifting this to \(\ucov{M}\) yields a \(\pi\)-invariant map \(f_{\ucov{M}} \colon \ccomp{\ucov{M}}{\ucov{\disk \nu}^\circ} \to \CP^\infty\) which represents a class \([f_{\ucov{M}}] \in \HZ_\pi^2(\ccomp{\ucov{M}}{\ucov{\disk\nu}^\circ}) \).

Next, we build $\Efree{\Gamma}$ by attaching free $\Gamma$-cells of dimension $\geq 4$ to $\ucov{M}$.
During this process, we also build a suitable \(\Gamma\)-locally \(\pi\)-precompact subset \(K \subseteq \Efree \Gamma\) and extend $[f_\ucov{M}]\in \HZ^2_\pi(\ccomp{\ucov{M}}{\ucov{\disk\nu}^\circ})$ to $[f_\Efree] \in \HZ^2_\pi(\ccomp{\Efree{\Gamma}}{K})$.
 This works exactly as in \cref{thm:application-singular-homology} —note that there is no obstruction to extending a degree \(2\) cohomology class along the added cells because they have dimension at least \(4\).

Finally, we apply \cref{thm:transfer-extension}, where the required extension of $\theta$ is given by $\theta_\Efree \coloneqq \delta f_\Efree^*(x) \in E^3_\pi(\Efree \Gamma, \ccomp{\Efree \Gamma}{K})$ using \(\delta \colon E^2_\pi(\ccomp{\Efree \Gamma}{K}) \to E^3_\pi(\Efree \Gamma, \ccomp{\Efree \Gamma}{K}) \) associated to the pair \((\Efree \Gamma, \ccomp{\Efree \Gamma}{K})\).
\end{proof}

\subsection{Extending the equivariant transfer map to \texorpdfstring{$\Eub\Gamma$}{E bar Gamma}}\label{subsec:extension-to-eub}
Finally, we study the question when the equivariant transfer map can be further extended to the classifying space for proper actions.
In all previous applications we have reduced the construction of the transfer map to the task of extending a class in ordinary cohomology. Now we want to extend this class to $\Eub\Gamma$. As usual, this is done by a spectral sequence argument. But we have to be careful with the choice of our models for the classifying spaces.

\medskip

By the discussion preceding \cref{thm:transfer-extension} the equivariant transfer map does not rely on a particular choice of model for $\Efree\Gamma$ or $\Eub\Gamma$.
For $\Efree\Gamma$ we take Milnor's infinite join construction (i.e.\ the \enquote{fat model}) with the usual $\Gamma$-CW-structure, where $n$-simplices correspond to $n+1$-tuples in $\Gamma$. For $\Eub\Gamma$ we take the \enquote{geometric model}, whose $n$-simplices correspond to finite subsets of $\Gamma$ of cardinality $n+1$ (see Mislin's appendix to \cite{Valette}). After passing to the barycentric subdivision this becomes a $\Gamma$-CW-complex. The canonical map $q\colon\Efree\Gamma\to\Eub\Gamma$ is obtained by extending the obvious map on the $0$-skeleton affinely to all higher cells. Crucially, it is proper when restricted to the skeleta $\Efree\Gamma^{(k)}$.

\begin{lemma}\label{lem:adapt-K-to-model}
Let $\Efree\Gamma$ and $\Eub\Gamma$ be represented by the models above, let $\Efree\Gamma\setminus K_\Efree$ be a subcomplex with $\Gamma$-locally $\pi$-precompact complement, and let $n\in\N$.

Then there exists a $\pi$-subcomplex $L\subset\Eub\Gamma$ with $\Gamma$-locally $\pi$-precompact complement, such that $q^{-1}(L)\subset\Efree\Gamma$ is a $\pi$-subcomplex with $\Gamma$-locally $\pi$-precompact complement, and such that the restriction maps
\begin{align*}
\HZ_\pi^{n}((\Efree\Gamma\setminus K_\Efree)\cup q^{-1}(L);\Z) &
\xrightarrow{\hspace{0.5cm}}\HZ_\pi^{n}(\Efree\Gamma\setminus K_\Efree;\Z),\\
\HZ_\pi^{n+1}(\Efree\Gamma,(\Efree\Gamma\setminus K_\Efree)\cup q^{-1}(L);\Z) &
\xrightarrow{\hspace{0.5cm}}\HZ_\pi^{n+1}(\Efree\Gamma,\Efree\Gamma\setminus K_\Efree;\Z)
\end{align*}
are surjective.
\end{lemma}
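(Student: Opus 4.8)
The plan is to build $L$ explicitly out of $K_\Efree$, controlling it only up to the bounded dimension that matters for degrees $n$ and $n+1$, and to exploit that $q$ is proper on skeleta. Concretely, set $m := n+1$ and define
\[
  L := \bigl\{\,\sigma\subseteq\Eub\Gamma\text{ a cell}\;\bigm|\;q^{-1}(\overline{\sigma})\cap\Efree\Gamma^{(m)}\subseteq\Efree\Gamma\setminus K_\Efree\,\bigr\}.
\]
First I would record the formal properties: $L$ is a subcomplex, since for a face $\sigma'\leq\sigma$ the set $q^{-1}(\overline{\sigma'})\cap\Efree\Gamma^{(m)}$ only shrinks; it is $\pi$-invariant, since $K_\Efree$, the map $q$ and the skeleton $\Efree\Gamma^{(m)}$ all are; and $q^{-1}(L)$ is a $\pi$-subcomplex because $q$ is cellular (in fact simplicial for the chosen models), so that preimages of subcomplexes are subcomplexes and open cells map into open cells.

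Next I would verify the two precompactness statements. Fix a $\Gamma$-cell $\Gamma\sigma_0$ of $\Eub\Gamma$. By properness of $q|_{\Efree\Gamma^{(m)}}$ the compact set $C_0:=q^{-1}(\overline{\sigma_0})\cap\Efree\Gamma^{(m)}$ is covered by the closures of finitely many cells $c_1,\dots,c_r$ of $\Efree\Gamma$. By $\Gamma$-equivariance of $q$, $\gamma\sigma_0\notin L$ forces $\gamma C_0\cap K_\Efree\neq\emptyset$, hence $\gamma\cdot\overline{c_j}\cap K_\Efree\neq\emptyset$ for some $j$; since $K_\Efree$ is $\Gamma$-locally $\pi$-precompact and $\Gamma$ acts freely on $\Efree\Gamma$, for each $j$ (and each of its finitely many faces) this occurs only for $\gamma$ in finitely many cosets $\pi\gamma$. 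So $\Eub\Gamma\setminus L$ is $\Gamma$-locally $\pi$-precompact. The identical count applied to $\Efree\Gamma\setminus q^{-1}(L)=q^{-1}(\Eub\Gamma\setminus L)$ — using that each open cell of $\Efree\Gamma$ lands in a single open cell of $\Eub\Gamma$ — shows this set is $\Gamma$-locally $\pi$-precompact, and therefore so is $(\Efree\Gamma\setminus K_\Efree)\cup q^{-1}(L)$, whose complement is contained in $K_\Efree$.

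The crux is the observation that $A:=\Efree\Gamma\setminus K_\Efree$ and $A':=(\Efree\Gamma\setminus K_\Efree)\cup q^{-1}(L)$ have the same $m$-skeleton. Indeed, if $c$ is a cell of $q^{-1}(L)$ with $\dim c\leq m$ and $\sigma\in L$ denotes the cell of $\Eub\Gamma$ with $q(\mathring{c})\subseteq\mathring{\sigma}$, then $\overline{c}\subseteq q^{-1}(\overline{\sigma})\cap\Efree\Gamma^{(m)}\subseteq\Efree\Gamma\setminus K_\Efree$, so $c$ already lies in $A$. Consequently the cellular relative cochain complex of $(A',A)$ vanishes in degrees $\leq m=n+1$, hence $\HZ_\pi^{j}(A',A;\Z)=0$ for $j\leq n+1$. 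Feeding this into the long exact sequence of the pair $(A',A)$ and of the triple $\Efree\Gamma\supseteq A'\supseteq A$ yields that
\[
  \HZ_\pi^{n}(A';\Z)\to\HZ_\pi^{n}(A;\Z)\qquad\text{and}\qquad\HZ_\pi^{n+1}(\Efree\Gamma,A';\Z)\to\HZ_\pi^{n+1}(\Efree\Gamma,A;\Z)
\]
are isomorphisms, in particular surjective, which is precisely the required statement.

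I do not expect any individually hard step; the only point requiring care is the bookkeeping with the CW-structures, namely that for the chosen models $q$ is genuinely cellular (so that $q^{-1}$ of a subcomplex is a subcomplex and open cells map into open cells) and proper on each skeleton $\Efree\Gamma^{(k)}$. These are exactly the structural facts about the models recorded just before the lemma; once they are granted, everything reduces to the skeleton comparison above together with routine exact-sequence chasing.
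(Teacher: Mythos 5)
Your proof is correct, but it takes a genuinely different route from the paper's. The paper first enlarges $\Efree\Gamma\setminus K_\Efree$ to a ``saturated'' subcomplex $\Efree\Gamma\setminus K'_\Efree$ containing every $\pi$-cell of dimension $>n+1$ whose boundary it already contains, then defines $L$ as the set of cells whose $q$-preimage misses $K'_\Efree$; surjectivity is obtained by showing that degree-$n$ and degree-$(n+1)$ cochains extend (uniquely, step by step, with a $\varprojlim^1$/Mittag--Leffler argument in the limit) over the added high-dimensional cells, and precompactness of $\Eub\Gamma\setminus L$ is checked via the combinatorics of tuples of length $\leq n+1$ in the join model. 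You instead build $L$ directly by testing only the $(n+1)$-skeleton of $q^{-1}(\overline\sigma)$ against $K_\Efree$; this makes the precompactness a direct consequence of properness of $q$ on skeleta, and — this is the nice point — it forces $A'=(\Efree\Gamma\setminus K_\Efree)\cup q^{-1}(L)$ and $A=\Efree\Gamma\setminus K_\Efree$ to have the same $(n+1)$-skeleton, so that $\HZ^j_\pi(A',A;\Z)=0$ for $j\leq n+1$ and both surjectivity statements drop out of the exact sequences of the pair and the triple, with no cochain-extension or $\varprojlim^1$ discussion at all. The trade-off is that your $L$ is tailored to the fixed degree $n$ (as is the paper's $K'_\Efree$), which is all the lemma needs. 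One bookkeeping point you should make explicit: an open cell of $\Efree\Gamma$ maps onto an open simplex of the geometric model of $\Eub\Gamma$ \emph{before} barycentric subdivision, i.e.\ onto a union of several open cells of the actual $\Gamma$-CW structure; to guarantee that $q^{-1}(L)$ is a subcomplex you should therefore run your definition of $L$ over the coarse simplices (finite subsets of $\Gamma$) and pass to the induced subcomplex of the subdivision — the face-heredity, equivariance, finiteness and skeleton arguments are unaffected.
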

\begin{proof}
First, we enlarge $\Efree\Gamma\setminus K_\Efree$ to a bigger subcomplex $\Efree\Gamma\setminus K'_{\Efree}$ such that every $\pi$-cell of dimension $>n+1$ whose boundary completely lies in $\Efree\Gamma\setminus K'_{\Efree}$ does itself belong to $\Efree\Gamma\setminus K'_{\Efree}$. We do this by induction over the skeleta of $\Efree\Gamma$, in each step adding all necessary $\pi$-cells to $\Efree\Gamma\setminus K'_{\Efree}$. If a cohomology class in $\HZ_\pi^n(\Efree\Gamma,\Efree\Gamma\setminus K_\Efree;\Z)$ or $\HZ_\pi^{n+1}(\Efree\Gamma\setminus K_\Efree;\Z)$ is given, it can, in each induction step, be simultaneously extended to all added $\pi$-cells. In fact, the extension is unique in each step, and by the $\operatorname{lim}^1$-short exact sequence and the Mittag-Leffler condition it is also unique in the limit.

Let now $L=\Eub\Gamma\setminus K_{\Eub}$ be the $\pi$-subcomplex of $\Eub\Gamma$ consisting of all those $\pi$-cells whose preimage under $q$ does not intersect $K'_\Efree$. We claim that the complement $K_{\Eub}$ is $\Gamma$-locally $\pi$-precompact. Indeed, let a non-equivariant cell of $\Eub\Gamma$ be given and let $S\subset\Gamma$ be the subset to which the cell belongs under the barycentric subdivision. The cell can only intersect $K_{\Eub}$ if there is some tuple in $S$ such that the corresponding non-equivariant cell in $\Efree\Gamma$ intersects $K'_\Efree$. By the construction of $K'_\Efree$ this can only happen if there is a tuple in $S$ of length $\leq n+1$ such that the corresponding cell intersects $K'_\Efree$. But there are only finitely many such tuples, and for each cell corresponding to one of them there are only finitely many $\pi$-cells in its $\Gamma$-orbit that intersect $K'_\Efree$. Hence, only finitely many $\pi$-cells in the $\Gamma$-orbit of the original cell intersect $K_{\Eub}$.

Finally, $q^{-1}(L)$ is a $\pi$-subcomplex of $\Efree\Gamma$. Its complement, being the preimage of a $\Gamma$-locally $\pi$-precompact set, is $\Gamma$-locally $\pi$-precompact. The restriction maps are surjective because $(\Efree\Gamma\setminus K_\Efree)\cup q^{-1}(L)\subset\Efree\Gamma\setminus K'_{\Efree}$, and we have already seen that cohomology classes can be extended to $\HZ^n_\pi(\Efree\Gamma\setminus K'_{\Efree};\Z)$ and $\HZ^{n+1}_\pi(\Efree\Gamma,\Efree\Gamma\setminus K'_{\Efree};\Z)$.
\end{proof}

\begin{lemma}\label{lem:homology-extension-to-Eub}
Let $\Efree\Gamma$ and $\Eub\Gamma$ be represented by the models above.
Let $\underline{A}\subset\underline{X}\subset\Eub\Gamma$ be any $\pi$-invariant subcomplexes \parensup{possibly $\underline{X}=\Eub\Gamma$ or $\underline{A}=\emptyset$}, let $X=q^{-1}(\underline{X})$ and $A=q^{-1}(\underline{A})$.
Assume that $n\geq 1$ and $\HZ^k(\Bfree{G};\Z)=0$ for all finite subgroups $G<\pi$ and all $0<k<n$.

Then there is a short exact sequence
\[\begin{tikzcd}0\arrow[r] &
\HZ^n_\pi(\underline{X},\underline{A};\Z)\arrow[r, "q^*"] &
\HZ^n_\pi(X,A;\Z)\arrow[r] &
Z\arrow[r] &
0\end{tikzcd},\]
where $Z$ is the group of sections from $\pi\backslash X$ into a certain sheaf $\mathcal{A}$, all of whose germs are given by $\HZ^n(\Bfree{G};\Z)$ for some finite subgroup $G<\pi$.
\end{lemma}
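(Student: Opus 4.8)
The plan is to reinterpret both sides of the claimed sequence as ordinary cohomology of orbit spaces, and then to run the Leray spectral sequence of the map $q$, the hypothesis on $\HZ^k(\Bfree G;\Z)$ being exactly what is needed to collapse that spectral sequence in the relevant range.

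First I would use that Bredon cohomology with the constant coefficient system $\underline{\Z}$ is nothing but the singular cohomology of the orbit space: equivariant cells descend bijectively to orbit cells with matching incidence numbers, so $\HZ^*_\pi(\underline{X},\underline{A};\Z)\cong\HZ^*(\pi\backslash\underline{X},\pi\backslash\underline{A};\Z)$ and $\HZ^*_\pi(X,A;\Z)\cong\HZ^*(\pi\backslash X,\pi\backslash A;\Z)$, and under these identifications $q^*$ becomes $(\pi\backslash q)^*$. Note that $\pi$ acts freely on $X\subseteq\Efree\Gamma$, so $\pi\backslash X$ is an ordinary CW-complex, whereas $\pi\backslash\underline{X}$ inherits the isotropy stratification of the proper $\pi$-complex $\underline{X}\subseteq\Eub\Gamma$.

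Second, I would analyze the map $\pi\backslash q\colon\pi\backslash X\to\pi\backslash\underline{X}$ over the strata. For $y\in\Eub\Gamma$ whose $\Gamma$-stabiliser meets $\pi$ in the finite group $G$, the set $q^{-1}(y)\subseteq\Efree\Gamma$ is a free $G$-space, and — using the explicit combinatorics of Milnor's join model for $\Efree\Gamma$, the geometric model for $\Eub\Gamma$, and the fact that $q$ is proper on skeleta — one checks it to be contractible, so that the fibre of $\pi\backslash q$ over the class of $y$ is $G\backslash q^{-1}(y)\simeq\Bfree G$ and, over each isotropy stratum, $\pi\backslash q$ is a bundle with such fibres up to homotopy. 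Consequently $\mathcal{A}^q\coloneqq R^q(\pi\backslash q)_*\underline{\Z}$ is a constructible sheaf on $\pi\backslash\underline{X}$ whose germ at a point of isotropy $G$ is $\HZ^q(\Bfree G;\Z)$; since $\Bfree G$ is connected this gives $\mathcal{A}^0=\underline{\Z}$, and by the hypothesis $\mathcal{A}^q=0$ for $0<q<n$, while $\mathcal{A}\coloneqq\mathcal{A}^n$ has the germs asserted in the statement. I expect this step — making precise the local product structure of $q$ over the strata so that these higher direct images come out correctly — to be \textbf{the main obstacle}; it is exactly the point at which the careful choice of models (and \cref{lem:adapt-K-to-model} together with the discussion preceding it) is used.

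Third, I would feed the spectral sequence of $\pi\backslash q$ (equivalently, of the filtration of $\pi\backslash\underline{X}$ by skeleta),
\begin{equation*}
E_2^{p,q}=\HZ^p(\pi\backslash\underline{X},\pi\backslash\underline{A};\mathcal{A}^q)\ \Longrightarrow\ \HZ^{p+q}_\pi(X,A;\Z),
\end{equation*}
with the vanishing $\mathcal{A}^q=0$ for $0<q<n$. On the diagonal $p+q=n$ only $E_2^{n,0}=\HZ^n_\pi(\underline{X},\underline{A};\Z)$ and $E_2^{0,n}=\HZ^0(\pi\backslash\underline{X},\pi\backslash\underline{A};\mathcal{A})$ can be nonzero; no nonzero differential has target $E_r^{n,0}$ (a source would sit in bidegree $(n-r,r-1)$ with $0<r-1<n$, where $E_2$ vanishes), so $q^*$ is injective with image the bottom filtration step, while the intermediate graded pieces $E_\infty^{p,n-p}$, $0<p<n$, vanish. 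This yields the exact sequence
\begin{equation*}
0\to\HZ^n_\pi(\underline{X},\underline{A};\Z)\xrightarrow{\ q^*\ }\HZ^n_\pi(X,A;\Z)\to\HZ^0(\pi\backslash\underline{X},\pi\backslash\underline{A};\mathcal{A})\xrightarrow{\ d_{n+1}\ }\HZ^{n+1}_\pi(\underline{X},\underline{A};\Z),
\end{equation*}
and setting $Z$ to be the image of the middle arrow gives the asserted short exact sequence; here $Z$ is a group of global sections of $\mathcal{A}$, which one may also view as sections over $\pi\backslash X$ of the pullback of $\mathcal{A}$ along $\pi\backslash q$ (the fibres being connected). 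The restriction $n\geq1$ is essential: for $n=0$ one would have $\mathcal{A}^0=\underline{\Z}$ in place of $\mathcal{A}$ and the conclusion fails. The only loose end to settle carefully in the write-up is which sections actually occur, i.e. the behaviour of the transgression $d_{n+1}$; for the intended applications it suffices that $q^*$ is injective and that the cokernel is governed by the sheaf $\mathcal{A}$.
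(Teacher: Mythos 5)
Your proposal follows essentially the same route as the paper's proof: identify Bredon cohomology with constant coefficients with singular cohomology of the orbit spaces, run the Leray spectral sequence of $\overline{q}\colon\pi\backslash X\to\pi\backslash\underline{X}$, and use the vanishing hypothesis to collapse the $n$-th diagonal to the two extreme terms. The step you flag as the main obstacle — that the stalks of the Leray sheaf at (the class of) $y$ are $\HZ^r(\Bfree{\pi_y};\Z)$ — is exactly where the paper invests its effort, but it does so more directly than your ``bundle over isotropy strata'' picture: it builds a star-shaped neighborhood basis $V'_i$ of $y$ inside a slice for the proper $\pi$-action and contracts each $q^{-1}(V'_i)$ explicitly in Milnor's join model (first the shift homotopy of \cite{tomDieck}*{14.4.4}, which preserves the fibers of $q$, then the straight-line homotopy to a fixed lift of $y$, well defined because $V'_i$ is star-shaped), so that each $\overline{q}^{-1}(U'_i)=\pi_y\backslash q^{-1}(V'_i)$ is a model for $\Bfree{\pi_y}$ and the stalk is computed as a colimit over this basis — no constructibility or local product structure is needed. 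One point where you are more careful than the paper: you retain the possible transgression $d_{n+1}\colon E^{0,n}\to E^{n+1,0}$ and take $Z$ to be its kernel, whereas the paper asserts $E_\infty^{0,n}=E_2^{0,n}$ without comment; the discrepancy is harmless for the intended application (\cref{thm:application-trivialized-codim-2}), where $n=1$ and $E_2^{0,1}=0$.
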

\begin{proof}
Because the action of $\pi$ on the coefficient module $\Z$ is trivial, $\HZ^*_\pi(X,A;\Z)$ and $\HZ^*_\pi(\underline{X},\underline{A};\Z)$ are isomorphic to $\HZ^*(\pi\backslash X,\pi\backslash A;\Z)$ and $\HZ^*(\pi\backslash \underline{X},\pi\backslash\underline{A};\Z)$, respectively. Let $\mathit{pr}^\pi_\Efree\colon\Efree\Gamma\to\pi\backslash\Efree\Gamma$ and $\mathit{pr}^\pi_{\Eub}\colon\Eub\Gamma\to\pi\backslash\Eub\Gamma$ denote the quotient maps.
We apply the Leray spectral sequence for sheaf cohomology to the induced map on the quotient $\overline{q}\colon(\pi\backslash X,\pi\backslash A)\to(\pi\backslash \underline{X},\pi\backslash\underline{A})$.

Using the terminology of Bredon \cite{Bredon} we let the \enquote{families of supports} $\Psi$ and $\Phi$ be the families of closed sets on $\pi\backslash X$ and $\pi\backslash\underline{X}$, respectively (in \cite{NitscheThesis} compact subsets were used erroneously).

By \cite{Bredon}*{Definition IV.6.1} there is a spectral sequence of sheaf cohomology groups
\[E_2^{p,r}=\HZ^p_\Phi(\pi\backslash\underline{X};\mathcal{A}^r)\Rightarrow\HZ_{\Phi(\Psi)}^{p+r}(\pi\backslash X,\pi\backslash A;\Z),\]
where $\mathcal{A}^r=\mathcal{H}_\Psi^r(\overline{q},\overline{q}\vert(\pi\backslash A);\Z))$ is the Leray sheaf, obtained from the presheaf that assigns to each open set $U\subset\underline{X}$ the abelian group
\[\HZ^r_{\Psi\cap\overline{q}^{-1}(U)}(\overline{q}^{-1}(U),\overline{q}^{-1}(U)\cap (\pi\backslash A);\Z).\]
All families of supports occurring in the above sheaf cohomology groups are simply the families of closed sets on the respective spaces. Because CW-complexes are paracompact, these families are paracompactifying in the sense of \cite{Bredon}*{Theorem I.6.1}, and it follows by \cite{Bredon}*{Theorem III.1.1} that $\HZ_{\Phi(\Psi)}^{p+r}(\pi\backslash X,\pi\backslash A;\Z)$ and $\HZ^r_{\Psi\cap\overline{q}^{-1}(U)}(\overline{q}^{-1}(U),\overline{q}^{-1}(U)\cap (\pi\backslash A);\Z)$ coincide with the usual singular cohomology groups.

We claim that the set of germs of $\mathcal{A}^r$ at a point $\mathit{pr}^\pi_{\Eub}(y)\in \pi\backslash\underline{X}$ is zero if $\mathit{pr}^\pi_{\Eub}(y)\in\pi\backslash\underline{A}$, and otherwise given by $\HZ^r(\Bfree\pi_y;\Z)$, the group cohomology of the stabilizer of $y$ in $\pi$.
To prove the claim we first find a neighborhood $\mathit{pr}^\pi_{\Eub}(y)\in U\subset\pi\backslash\underline{X}$ such that $(\mathit{pr}^\pi_{\Eub})^{-1}(U)=\bigsqcup_{\overline{g}\in\pi/\pi_y} V_{\overline{g}}$. This is possible because $\pi$ acts properly on $\underline{X}$.
Unless $\mathit{pr}^\pi_{\Eub}(y)\in\pi\backslash\underline{A}$, we also assume $U\cap\pi\backslash\underline{A}=\emptyset$.
Next, we construct, via cellular induction, a neighborhood basis $\{V'_i\}_{i\in\N}$ of $y$ such that each $V'_i$ completely lies in $(\mathit{pr}^\pi_{\Eub})^{-1}(U)$ and is star-shaped around $y$. Then $\{U'_i\}=\{\mathit{pr}^\pi_{\Eub}(V'_i)\}$ is a neighborhood basis of $\mathit{pr}^\pi_{\Eub}(y)$.
It suffices to show that $q^{-1}(y)$ and all $q^{-1}(V'_i)$ are (non-equivariantly) contractible because from this it follows that all the preimages $\overline{q}^{-1}(\mathit{pr}^\pi_{\Eub}(V'_i))=\pi_y\backslash q^{-1}(V'_i)$ are models for $\Bfree\pi_y$ and that $\varinjlim\HZ^r(\overline{q}^{-1}(U'_i);\Z)=\HZ^r(\overline{q}^{-1}(\mathit{pr}^\pi_{\Eub}(y));\Z)=\HZ^r(\Bfree\pi_y;\Z)$.

To construct the contracting homotopies, recall that the points in the infinite join model $\Efree\Gamma$ can be written in coordinate form $x=(t_j\gamma_j)_\N$ with $t_j\in[0,1]$, $\gamma_j\in\Gamma$, $\sum t_j=1$ and $t_j=0$ for all but finitely many entries. The image $q(x)$ is obtained by adding together those coefficients $t_j$ that precede the same group elements and forgetting the ordering of the group elements.

The first step is to apply the homotopy described in \cite{tomDieck}*{14.4.4}, from the identity map on $\Efree\Gamma$ to the map that sends
$(t_1\gamma_1,t_2\gamma_2,\dots)$ to $(t_1\gamma_1,0,t_2\gamma_2,0,\dots)$.
The reverse of this homotopy is obtained by stacking together an infinite number of homotopies of the form
\[
(t_1\gamma_1,\dots,t_j\gamma_j,tt_{j+1}\gamma_{j+1},(1-t)t_{j+1}\gamma_{j+1},tt_{j+2}\gamma_{j+2},(1-t)t_{j+2}\gamma_{j+2},\dots).
\]
The homotopy preserves the fibers of $q$.
The second step is to choose any preimage of $y$ under $q$ of the form $(0,t'_1\gamma'_1,0,t'_2\gamma'_2,0,\dots)$ and to apply the homotopy
$((1-t)t_1\gamma_1,tt'_1\gamma'_1,(1-t)t_2\gamma_2,tt'_2\gamma'_2,\dots)$.
Because $V'_i$ is star-shaped, this homotopy is well-defined on $q^{-1}(V'_i)$.
This finishes the proof of the claim.

Now, by assumption, $\HZ^k(\Bfree{\pi_y};\Z)=0$ for $0<k<n$.
Hence, there are only two non-vanishing entries on the $n$-th diagonal of the $E_\infty$ page of the spectral sequence, and the extension problem becomes
\[\begin{tikzcd}0\arrow[r] &
E_\infty^{n,0}\arrow[r] &
\HZ^n(\pi\backslash X,\pi\backslash A;\Z)\arrow[r] &
E_\infty^{0,n}\arrow[r] &
0\end{tikzcd}.\]
The group $E_\infty^{n,0}=E_2^{n,0}$ is equal to $\HZ^n_\Phi(\pi\backslash\underline{X};\mathcal{A}^0)$, where the sheaf $\mathcal{A}^0$ is zero over $\pi\backslash\underline{A}$ and the trivial $\Z$-sheaf everywhere else.
By \cite{Bredon}*{Proposition II.12.3}, and again \cite{Bredon}*{Theorem III.1.1}, it follows that $E_\infty^{n,0}=\HZ^n(\pi\backslash\underline{X},\pi\backslash\underline{A};\Z)$.
The group $E_\infty^{0,n}=E_2^{0,n}$ is, by definition, the group of global sections from $\pi\backslash \underline{X}$ into the sheaf $\mathcal{A}^n$ that is zero over $\pi\backslash\underline{A}$ and has germs $\mathcal{A}_{\mathit{pr}^\pi_{\Eub}(y)}=\HZ^n(\Bfree{\pi_y};\Z)$ elsewhere.

The first homomorphisms in the short exact sequence is the induced map $\overline{q}^*$, the second one is pointwise induced by the restriction to the respective fiber (see \cite{Bredon}*{IV.6.3 and Exercise 5}).
\end{proof}

In principle, we can try to use the preceding two lemmas in order to extend the equivariant transfer map for all the application scenarios considered previously. Unfortunately, the conditions on $\Gamma$ occurring in \cref{lem:homology-extension-to-Eub} are very restrictive.
In the special case of a codimension $2$ submanifold with trivial normal bundle we obtain the following.

\begin{theorem}\label{thm:application-trivialized-codim-2}
Let $E$ be a multiplicative equivariant homology theory with lf\nobreakdash-restrictions.
Suppose that we are in \cref{setup:geom}, with $N \subset M$ an embedding of codimension $k=2$.

In addition, assume that the induced map $\pi_1(N)\to\pi_1(M)$ is injective and $\pi_2(N)\to\pi_2(M)$ surjective, and that the normal bundle of $N$ is trivialized by a map $(\disk\nu,\sphere\nu)\to(\disk^2,\sphere^1)$.
Let $\theta\in E^2(\disk\nu,\sphere\nu)$ be the pullback of the twice suspended unit in $E^2(\disk^2,\sphere^1)$ under the trivialization.

Then the transfer map $\tau_\theta$ can be extended to the classifying space for free actions and further to the classifying space for proper actions. The extensions are natural with respect to transformations of homology theories \parensup{that preserve all structure}.
\end{theorem}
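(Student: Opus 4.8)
The proof reduces the claim to verifying conditions \labelcref{item:lift-to-Efree} and \labelcref{item:lift-to-Eub} of \cref{thm:transfer-extension}; these give the two extensions, and naturality in transformations of homology theories is automatic, since every class produced below is a pullback of the fixed twice suspended unit \(u\in E^2(\disk^2,\sphere^1)\) — a class that exists in \emph{every} multiplicative theory, so that \emph{no complex orientation is needed}. This is precisely what distinguishes the present situation from \cref{thm:application-oriented-codim-2}.

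For condition \labelcref{item:lift-to-Efree} I would use that, the normal bundle being trivialised, the Thom class is \(\theta=\varphi^\ast(u)\) for the given trivialisation \(\varphi\colon(\disk\nu,\sphere\nu)\to(\disk^2,\sphere^1)\), i.e.\ \(\theta\) is literally the twofold suspension of the unit \(1\in E^0(N)\). One then constructs, as in \cite{NitscheThesis}*{5.3.9--11} (and in the spirit of \cref{rmk:transfer-compatible-with-codim-1}), compatible models in which a neighbourhood of \(\Bfree\pi\) inside \(\Bfree\Gamma\) is a \emph{trivial} disc bundle \(\Bfree\pi\times\disk^2\), with \(M\hookrightarrow\Bfree\Gamma\) restricting on \(N\times\disk^2\) to the given tubular neighbourhood; this uses the injectivity of \(\pi_1(N)\to\pi_1(M)\) and the surjectivity of \(\pi_2(N)\to\pi_2(M)\) (the first to keep the tube \(\pi_1\)-incompressible, the second to kill \(\pi_2\) of the total space after glueing in \(\Bfree\pi\)). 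Lifting to universal covers, \(K_\Efree\coloneqq\Efree\pi\times\disk^2\subset\Efree\Gamma\) is \(\Gamma\)-locally \(\pi\)-precompact (one \(\pi\)-translate, meeting each \(\Gamma\)-cell in at most one \(\pi\)-cell), the classifying map \(\ucov M\to\Efree\Gamma\) is a map of pairs onto the tube and its complement, and one simply sets \(\theta_\Efree\coloneqq\mathrm{pr}_{\disk^2}^\ast(u)\in E^2_\pi(\Efree\pi\times\disk^2,\Efree\pi\times\sphere^1)\cong E^2_\pi(\Efree\Gamma,\ccomp{\Efree\Gamma}{K_\Efree^\circ})\). (One may then transport this set-up along a \(\Gamma\)-homotopy equivalence to the join model of \(\Efree\Gamma\) so that the lemmas below apply; such an equivalence is cellular, hence preserves \(\Gamma\)-local \(\pi\)-precompactness.)

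For condition \labelcref{item:lift-to-Eub} I would pass to ordinary cohomology and apply the two preceding lemmas. First use \cref{lem:adapt-K-to-model} with \(n=2\): it adapts \(K_\Efree\) and produces a \(\pi\)-subcomplex \(L=\ccomp{\Eub\Gamma}{K_{\Eub}}\subset\Eub\Gamma\) with \(\Gamma\)-locally \(\pi\)-precompact complement, with \(q^{-1}(L)\) likewise, and with the relevant degree-\(2\) and degree-\(3\) restriction maps surjective. Then use \cref{lem:homology-extension-to-Eub} with \(n=2\): its hypothesis \(\HZ^k(\Bfree G;\Z)=0\) for \(0<k<2\) and all finite \(G<\pi\) is vacuous, so it yields a short exact sequence
\[0\longrightarrow\HZ^2_\pi(\Eub\Gamma,L;\Z)\xrightarrow{\ q^\ast\ }\HZ^2_\pi(\Efree\Gamma,q^{-1}(L);\Z)\longrightarrow Z\longrightarrow 0 .\]
The underlying ordinary Thom class (the homotopy class of \(\mathrm{pr}_{\disk^2}\) followed by \((\disk^2,\sphere^1)\to(\CP^\infty,\pt)\)), restricted to \((\Efree\Gamma,q^{-1}(L))\), maps to \(0\in Z\): the edge map to \(Z\) is restriction to the fibres of \(q\), the germs over \(L\) vanish by the lemma, and a germ over \(y\in K_{\Eub}\) is the restriction of the class along \(\Bfree\pi_y\to\pi\backslash\Efree\Gamma\) to a fibre lying \emph{inside} the support \(q^{-1}(K_{\Eub})\) of the relative class, hence factors through the underlying absolute class in \(\HZ^2(\pi\backslash\Efree\Gamma;\Z)\); since the tube \(\Bfree\pi\times\disk^2\) includes into \(\pi\backslash\Efree\Gamma\simeq\Bfree\pi\) as a homotopy equivalence and the absolute Thom class of a trivial bundle is its Euler class, namely \(0\), this absolute class vanishes. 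Thus the ordinary Thom class descends to \(\HZ^2_\pi(\Eub\Gamma,L;\Z)\); because the whole construction is carried by the map \(\mathrm{pr}_{\disk^2}\) into \((\disk^2,\sphere^1)\) (not into \(\CP^\infty\)), the descended class is again realised by a map \((\Eub\Gamma,\ccomp{\Eub\Gamma}{K_{\Eub}})\to(\disk^2,\sphere^1)\), and pulling \(u\) back along it gives the required \(\theta_{\Eub}\). Plugging \(\theta_\Efree,\theta_{\Eub}\) into \cref{thm:transfer-extension} finishes the proof; specialising \(E\) to \(\spin\)-bordism and \(\KO\) gives the diagram of \cref{thm:trivialized-codim-2-short}.

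The crux — and the step I expect to cost the most effort — is the \labelcref{item:lift-to-Eub} part: reconciling the model choices (the lemmas are proved for the join model of \(\Efree\Gamma\) and the geometric model of \(\Eub\Gamma\), while the clean description of the Thom class lives on the trivial-tube model) and then checking that the obstruction in \(Z\) vanishes. It is exactly the triviality of the normal bundle that makes the latter hold — unconditionally in \(\Gamma\) — by forcing the absolute Thom class on \(\Bfree\pi\) to be zero; for a non-trivial normal bundle this absolute class is a genuinely non-zero Euler/Chern class and the obstruction need not die, which is why the hypotheses of \cref{thm:application-trivialized-codim-2} single out the trivialised case.
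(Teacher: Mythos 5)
Your overall skeleton (reduce to \cref{thm:transfer-extension}, then use \cref{lem:adapt-K-to-model,lem:homology-extension-to-Eub} to descend along $q$) is the right one, but both halves contain genuine gaps, and both stem from working with the wrong cohomological degree. The paper's key move is to write $\theta=\delta\circ f^{*}(x)$, where $f\colon\sphere\nu\to\sphere^1$ is the restriction of the trivialization and $x\in E^1(\sphere^1)$ is the \emph{once}-suspended unit; one then propagates the \emph{degree-one integral} class $[f]$ of the \emph{complement} of the tube, and recovers the $E$-theoretic Thom class at the very end by applying the coboundary $\delta$ in $E$-cohomology. For condition \labelcref{item:lift-to-Efree} this means: relative Hurewicz (using $\pi_1$-injectivity and $\pi_2$-surjectivity, exactly as in the proof of \cref{thm:application-oriented-codim-3}) extends $[f]$ to $\HZ^1_\pi(\ccomp{\ucov{M}}{\ucov{\disk\nu}^\circ})$, and a degree-one class extends over the cells of dimension $\geq 3$ added to form $\Efree\Gamma$ without any obstruction. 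Your alternative — building models in which $\Bfree\pi\times\disk^2$ sits inside $\Bfree\Gamma$ as a trivial tube compatible with $M$ — is precisely the construction that \cref{rmk:transfer-compatible-with-codim-1} flags as \emph{not} known to be possible in codimension $\geq 2$ under these hypotheses; you assert it rather than prove it, and the whole point of the machinery of \cref{sec:generalTransfer} is to avoid needing it.

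For condition \labelcref{item:lift-to-Eub} the degree choice is not cosmetic. Running \cref{lem:homology-extension-to-Eub} with $n=1$ and $\underline{A}=\emptyset$ on $\underline{X}=L$ makes the obstruction group $Z$ vanish outright (germs $\HZ^1(\Bfree G;\Z)=0$ for finite $G$), so $q^{*}\colon\HZ^1_\pi(L;\Z)\to\HZ^1_\pi(q^{-1}(L);\Z)$ is an \emph{isomorphism} and $[f]$ descends with nothing to check; setting $\theta_{\Eub}=\delta\circ f_{\Eub}^{*}(x)$ finishes the proof. Your degree-two version forces you to (a) kill an obstruction valued in sections of a sheaf with germs $\HZ^2(\Bfree\pi_y;\Z)$ — generically nonzero for finite $\pi_y$ — and (b) convert the descended relative $\HZ^2$-class back into a map of pairs into $(\disk^2,\sphere^1)$ so that $u$ can be pulled back in the arbitrary theory $E$. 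Step (b) does not follow: $(\disk^2,\sphere^1)$ is not a relative $K(\Z,2)$; homotopy classes of maps of pairs $(Y,B)\to(\disk^2,\sphere^1)$ correspond (since $\disk^2$ is contractible) to classes in $\HZ^1(B;\Z)$, so what you actually need to descend is the degree-one class of the subspace, i.e.\ exactly the datum the paper tracks. Your parenthetical "the descended class is again realised by a map into $(\disk^2,\sphere^1)$" is an assertion of the very thing that must be proved, and your fallback of mapping to $\CP^\infty$ would require a complex orientation on $E$, which the theorem deliberately does not assume.
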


\begin{proof}
The twice suspended unit in $E^2(\disk^2,\sphere^1)$ is the coboundary of the once suspended unit $x\in E^1(\sphere^1)$. Hence, $\theta=\delta\circ f^*(x)$, where $f\colon\sphere\nu\to\sphere^1$ is the restriction of the trivialization map. The homotopy class of $f$ defines an element in $\HZ^1(\sphere\nu;\Z)\cong\HZ^1_\pi(\ucov{\sphere\nu})$.
As in the proof of \cref{thm:application-oriented-codim-3}, we can extend $[f]$ to $[f_\ucov{M}]\in \HZ^1_\pi(\ccomp{\ucov{M}}{\ucov{\disk\nu}^\circ})$ and to $[f_\Efree]\in \HZ^1_\pi(\Efree\Gamma\setminus K_\Efree)$, where $K_\Efree$ is $\Gamma$\nobreakdash-locally $\pi$-precompact.

Now we switch to the infinite join model for $\Efree\Gamma$ and invoke \cref{lem:adapt-K-to-model} to replace $[f_\Efree]$ with $(i_{q^{-1}(L)})^*[f'_\Efree]\in\HZ_\pi^1(q^{-1}(L);\Z)$. Note that by naturality of the equivariant transfer map, the generalized cohomology classes $\delta\circ{f_\Efree}^*(x)$, $\delta\circ(f'_\Efree)^*(x)$ and $\delta\circ((i_{q^{-1}(L)})^*f'_\Efree)^*(x)$ all give rise to the same transfer map.

Now, the conditions of \cref{lem:homology-extension-to-Eub} are satisfied for $\underline{X}=L$, $\underline{A}=\emptyset$ and $n=1$. Because $\HZ^1(\Bfree{G};\Z)=0$ for all finite groups $G$, it follows that the induced map $q^*\colon\HZ^1_\pi(L;\Z)\to\HZ^1_\pi(q^{-1}(L);\Z)$ is an isomorphism. Hence, we can extend $(i_{q^{-1}(L)})^*[f'_\Efree]$ to $[f_{\Eub}]\in \HZ^1_\pi(L;\Z)$.

Finally, the extensions of the transfer map are obtained by applying \cref{thm:transfer-extension}, where the extensions of $\theta$ are given by $\delta\circ{f_\Efree}^*(x)$ and $\delta\circ{f_{\Eub}}^*(x)$.
\end{proof}

As discussed in the introduction, the most interesting application of \cref{thm:application-trivialized-codim-2} is the case where $E=\KO$ is real K-homology, where our extension provides some context for \cref{theo:obstruc_codim_2}.
We expect that the extension is compatible with the transfer map defined by
Kubota in\cite{Kubota} on the K-theory of the maximal group $\Cstar$-algebras,
but we do not know whether this is true.

\begin{bibdiv}

  \begin{biblist}

    \bib{Adams}{book}{
   author={Adams, J. F.},
   title={Stable homotopy and generalised homology},
   note={Chicago Lectures in Mathematics},
   publisher={University of Chicago Press, Chicago, Ill.-London},
   date={1974},
   pages={x+373},
   review={\MR{0402720}},
}

\bib{BaumHigsonSchick2}{article}{
   author={Baum, Paul},
   author={Higson, Nigel},
   author={Schick, Thomas},
   title={A geometric description of equivariant $K$-homology for proper
   actions},
   conference={
      title={Quanta of maths},
   },
   book={
      series={Clay Math. Proc.},
      volume={11},
      publisher={Amer. Math. Soc., Providence, RI},
   },
   date={2010},
   pages={1--22},
   review={\MR{2732043}},
}

\bib{blackadar}{book}{
   author={Blackadar, Bruce},
   title={$K$-theory for operator algebras},
   series={Mathematical Sciences Research Institute Publications},
   volume={5},
   edition={2},
   publisher={Cambridge University Press, Cambridge},
   date={1998},
   pages={xx+300},
   isbn={0-521-63532-2},
   review={\MR{1656031}},
}

\bib{Bredon}{book}{
    AUTHOR = {Bredon, Glen E.},
     TITLE = {Sheaf theory},
 PUBLISHER = {McGraw-Hill Book Co., New York-Toronto, Ont.-London},
      YEAR = {1967},
     PAGES = {xi+272},
}

\bib{chabertEchterhoff}{article}{
  author={Chabert, J\'er\^ome},
  author={Echterhoff, Siegfried},
  title={Permanence properties of the Baum-Connes conjecture},
  journal={Doc. Math.},
  volume={6},
  date={2001},
  pages={127--183},
  issn={1431-0635},
  review={\MR{1836047}},
}

\bib{Engel}{article}{
   author={Engel, Alexander},
   title={Wrong way maps in uniformly finite homology and homology of
   groups},
   journal={J. Homotopy Relat. Struct.},
   volume={13},
   date={2018},
   number={2},
   pages={423--441},
   issn={2193-8407},
   review={\MR{3802801}},
   doi={10.1007/s40062-017-0187-x},
}

\bib{EngelArXiv}{unpublished}{
  author={Engel, Alexander},
  title={Wrong way maps in uniformly finite homology and homology of
groups},
  note={arXiv:1602.03374v4 [math.GT]}
}

\bib{GromovLawsonEnlarg}{article}{
  author={Gromov, Mikhael},
  author={Lawson, H. Blaine, Jr.}, title={Spin and scalar curvature in the
    presence of a fundamental group.  I}, journal={Ann. of Math. (2)},
  volume={111}, date={1980}, number={2}, pages={209--230},
  issn={0003-486X}, review={\MR{569070 (81g:53022)}},
  doi={10.2307/1971198},
}

    \bib{HankePapeSchick}{article}{
   author={Hanke, Bernhard},
   author={Pape, Daniel},
   author={Schick, Thomas},
   title={Codimension two index obstructions to positive scalar curvature},
   journal={Ann. Inst. Fourier (Grenoble)},
   volume={65},
   date={2015},
   number={6},
   pages={2681--2710},
   issn={0373-0956},
   review={\MR{3449594}},
}



\bib{Jakob}{article}{
   author={Jakob, Martin},
   title={A bordism-type description of homology},
   journal={Manuscripta Math.},
   volume={96},
   date={1998},
   number={1},
   pages={67--80},
   issn={0025-2611},
   review={\MR{1624352}},
   doi={10.1007/s002290050054},
}

\bib{KasparovConspectus}{article}{
  author={Kasparov, G. G.},
  title={$K$-theory, group $C^*$-algebras, and higher signatures
  (conspectus)},
  conference={
  title={Novikov conjectures, index theorems and rigidity, Vol.\ 1},
  address={Oberwolfach},
  date={1993},
  },
  book={
  series={London Math. Soc. Lecture Note Ser.},
  volume={226},
  publisher={Cambridge Univ. Press, Cambridge},
  },
  date={1995},
  pages={101--146},
  review={\MR{1388299}},
  doi={10.1017/CBO9780511662676.007},
}

    \bib{Kubota}{unpublished}{
      author={Kubota, Yosuke},
      title={The relative Mishchenko-Fomenko higher index and almost flat
        bundles},
      date={2018},
      note={arXiv:1807.03181},
}

\bib{KubotaSchick}{unpublished}{
  author={Kubota, Yosuke},
  author={Schick, Thomas},
  title={On the codimension $2$ index obstruction},
  date={2019},
  note={preprint, available from the authors},
}


\bib{LurieChromatic}{webpage}{
  author={Lurie, Jacob},
  title={Chromatic Homotopy Theory},
  URL={http://www.math.harvard.edu/~lurie/252x.html},
  note={lecture notes},
}

\bib{Lueck}{article}{
    AUTHOR = {L{\"u}ck, Wolfgang},
     TITLE = {Chern characters for proper equivariant homology theories and
              applications to {$K$}- and {$L$}-theory},
   JOURNAL = {J. Reine Angew. Math.},
    VOLUME = {543},
      YEAR = {2002},
     PAGES = {193--234},
      ISSN = {0075-4102},
       URL = {https://doi.org/10.1515/crll.2002.015},
}

\bib{NitscheThesis}{thesis}{
  type = {Ph.D. Thesis},
  author       = {Nitsche, Martin},
  title        = {New topological and index-theoretical methods to study the geometry of manifolds},
  school       = {University of Göttingen},
  year         = {2018}
}

\bib{oyonooyono}{article}{
   author={Oyono-Oyono, Herv\'e},
   title={Baum-Connes conjecture and group actions on trees},
   journal={$K$-Theory},
   volume={24},
   date={2001},
   number={2},
   pages={115--134},
   issn={0920-3036},
   review={\MR{1869625}},
   doi={10.1023/A:1012786413219},
}

\bib{RudyakThomSpectra}{book}{
   author={Rudyak, Yuli B.},
   title={On Thom spectra, orientability, and cobordism},
   series={Springer Monographs in Mathematics},
   note={With a foreword by Haynes Miller},
   publisher={Springer-Verlag, Berlin},
   date={1998},
   pages={xii+587},
   isbn={3-540-62043-5},
   review={\MR{1627486}},
}

\bib{schickRealVsComplex}{article}{
   author={Schick, Thomas},
   title={Real versus complex $K$-theory using Kasparov's bivariant
   $KK$-theory},
   journal={Algebr. Geom. Topol.},
   volume={4},
   date={2004},
   pages={333--346},
   issn={1472-2747},
   review={\MR{2077669}},
   doi={10.2140/agt.2004.4.333},
}

    \bib{SchickICM}{inproceedings}{
   author={Schick, Thomas}, title={The topology of scalar curvature},
  booktitle={Proceedings of the International Congress of Mathematicians Seoul
    2014, VOLUME II},
  pages={1285-1308},
note={arXiv:1405.4220},
  year={2014},}


\bib{SW77TopologicalMethods}{article}{
   author={Scott, Peter},
   author={Wall, Terry},
   title={Topological methods in group theory},
   conference={
      title={Homological group theory},
      address={Proc. Sympos., Durham},
      date={1977},
   },
   book={
      series={London Math. Soc. Lecture Note Ser.},
      volume={36},
      publisher={Cambridge Univ. Press, Cambridge-New York},
   },
   date={1979},
   pages={137--203},
   review={\MR{564422}},
}

\bib{tomDieck}{book}{
  title={Algebraic Topology},
  author={tom Dieck, Tammo},
  isbn={9783037190487},
  series={EMS textbooks in mathematics},
  year={2008},
  publisher={European Mathematical Society}
}

\bib{Valette}{book}{
    AUTHOR = {Valette, Alain},
     TITLE = {Introduction to the {B}aum-{C}onnes conjecture},
    SERIES = {Lectures in Mathematics ETH Z\"urich},
      NOTE = {From notes taken by Indira Chatterji,
              With an appendix by Guido Mislin},
 PUBLISHER = {Birkh\"auser Verlag, Basel},
      YEAR = {2002},
     PAGES = {x+104},
      ISBN = {3-7643-6706-7},
       URL = {https://doi.org/10.1007/978-3-0348-8187-6},
}

\bib{ZeidlerSubmf}{article}{
   author={Zeidler, Rudolf},
   title={An index obstruction to positive scalar curvature on fiber bundles
   over aspherical manifolds},
   journal={Algebr. Geom. Topol.},
   volume={17},
   date={2017},
   number={5},
   pages={3081--3094},
   issn={1472-2747},
   review={\MR{3704253}},
   doi={10.2140/agt.2017.17.3081},
}

  \end{biblist}
\end{bibdiv}

\end{document}

{\small
\bibliographystyle{plain}
\bibliography{whatever}
}
